\DeclareMathOperator{\Ad}{Ad}
\DeclareMathOperator{\ad}{ad}
\newcommand{\g}{\mathfrak{g}}
\renewcommand{\k}{\mathfrak{k}}
\newcommand{\p}{\mathfrak{p}}
\renewcommand{\a}{\mathfrak{a}}
\newcommand{\n}{\mathfrak{n}}
\newcommand{\s}{\mathfrak{s}}
\newcommand{\M}{\mathbb{M}}
\renewcommand{\div}{\operatorname{div}}
\newtheorem{thm}{Theorem}
\newtheorem{prop}[thm]{Proposition}
\newtheorem{lem}[thm]{Lemma}
\begin{document}

\title[Bourgain-Brezis inequalities on symmetric spaces]{Bourgain-Brezis inequalities on symmetric spaces of non-compact type}
\author[S. Chanillo]{Sagun Chanillo}
\address{Department of Mathematics\\ Rutgers, the State University of New
Jersey\\ 110 Frelinghuysen Road\\ Piscataway, NJ 08854, USA}
\email{chanillo@math.rutgers.edu}
\author[J. Van Schaftingen]{Jean Van Schaftingen}
\address{Institut de Recherche en Math\'ematique et en Physique\\ Universit\'e catholique de Louvain\\ Chemin du Cyclotron 2 bte L7.01.01\\ 1348 Louvain-la-Neuve\\ Belgium}
\email{Jean.VanSchaftingen@uclouvain.be}
\author[P.-L. Yung]{Po-Lam Yung}
\address{Department of Mathematics\\ the Chinese University of Hong Kong\\
Shatin\\ Hong Kong}
\email{plyung@math.cuhk.edu.hk}

\keywords{Critical Sobolev space; symmetric space; Lie group; Iwasawa decomposition.}
\subjclass[2010]{
35R03 
 (%
43A80, 
53C35
)}
\maketitle

\begin{abstract}
Let $\M$ be a global Riemannian symmetric space of non-compact type.
We prove a duality estimate, for pairings of divergence-free $L^1$ vector fields, with vector fields in a critical Sobolev space on $\M$:
\[
  \left| \int_{\M} \langle f, \phi \rangle dV \right| \leq C \Vert f\Vert_{L^1(dV)} \Vert \nabla \phi\Vert_{L^m(dV)}.
\]
This estimate provides a remedy for the failure of a critical Sobolev embedding on such symmetric spaces.
\end{abstract}

\section{Introduction}

When \(n \ge 2\), it is known that there is no embedding of the homogeneous Sobolev space
\(\dot{W}^{1, n } (\mathbb{R}^n)\) in \(L^\infty (\mathbb{R}^n)\).
However, Bourgain and Brezis \citelist{\cite{MR1949165}\cite{MR1913720}\cite{MR2293957}\cite{MR2057026}}
have observed that if \(f\) is a smooth divergence-free vector field on $\mathbb{R}^n$, and \(\phi\) is a smooth compactly supported vector field on $\mathbb{R}^n$, then the following estimate holds:
\begin{equation}
\label{ineqBBVS}
  \left| \int_{\mathbb{R}^n} \langle f, \phi \rangle \right| \leq C \Vert f\Vert_{L^1} \Vert \nabla \phi\Vert_{L^n}.
\end{equation}
This can be thought of as some compensation phenomenon arising from the divergence-free condition on \(f\), and allows in some sense to remedy the failure of the embedding of $\dot{W}^{1,n}(\mathbb{R}^n)$ into $L^{\infty}(\mathbb{R}^n)$. The estimate \eqref{ineqBBVS} is slightly stronger than the embedding of \(\dot{W}^{1, n}(\mathbb{R}^n)\) in BMO \cite{MR2240172}.

Bourgain and Brezis have deduced the estimate \eqref{ineqBBVS} from the solvability of the Hodge-de Rham system: they showed that for every differential $\ell$-form \(\eta\) on $\mathbb{R}^n$ with coefficients in the critical homogeneous Sobolev space $\dot{W}^{1,n}(\mathbb{R}^n)$, where $1 \leq \ell \leq n-1$, there exists a differential $\ell$-form $\theta$, whose components are all in $\dot{W}^{1,n} \cap L^{\infty} (\mathbb{R}^n)$, such that
\(d \theta = d \eta,\)
with
\[
\Vert \theta \Vert_{\dot{W}^{1,n}} + \Vert  \theta \Vert_{L^{\infty}} \leq C \Vert d \eta\Vert_{L^n},
\]
for some constant \(C\) that does not depend on \(\eta\).
The construction is based on a Littlewood--Paley decomposition. Their result implies in fact a stronger estimate than \eqref{ineqBBVS}, in which \(\Vert f \Vert_{L^1}\) is replaced by the weaker norm \(\Vert f \Vert_{L^1 + \dot{W}^{-1, n/(n - 1)}}\).
They also proved, by Smirnov's approximation of solenoidal vector charges by elementary solenoids \cite{MR1246427}, that the estimate \eqref{ineqBBVS} was equivalent to an estimate of circulation integrals of vector fields in critical Sobolev spaces \citelist{\cite{MR2075883}\cite{MR2038078}} (see also \cite{MR2500488} for a discussion of the equivalence).
In \cite{MR2078071} \eqref{ineqBBVS} was given a direct proof, based on slicing of the Euclidean space and integration by parts, and that is somehow reminiscent of the proof by Gagliardo and Nirenberg of the limiting Sobolev embedding \cites{MR0109940,MR0102740}. In the planar case \(n = 2\), the estimate is equivalent to the isoperimetric inequality and to the Gagliardo--Nirenberg--Sobolev inequality.

The estimate \eqref{ineqBBVS} has been used to prove certain Gagliardo-Nirenberg inequalities for differential forms on $\mathbb{R}^n$ \citelist{\cite{MR2122730}\cite{MR2293957}}, and has been applied to several complex variables \cite{MR2592736}, to wave and Schr\"odinger equations \cite{MR2876830}, to fluid mechanics and magnetism \cite{MR3439724}, to plasticity \cite{MR2677615}, to list a few (see also the review \cite{MR3298002}).

The compensation phenomenon behind the estimate \eqref{ineqBBVS} turned out to be quite robust and extended quite beyond its original setting: the Sobolev space \(\dot{W}^{1, n} (\mathbb{R}^n)\) can be replaced by a more general fractional Sobolev or Lorentz spaces \citelist{\cite{MR2550188}\cite{MR3054337}}, the divergence-free condition is in fact a particular case of a class of differential conditions \citelist{\cite{MR2293957}\cite{MR2443922}\cite{MR3085095}}, the Euclidean space \(\mathbb{R}^n\) can be replaced by domains, or compact manifolds with smooth boundaries, under suitable boundary conditions \citelist{\cite{MR2293957}\cite{MR2332419}}, and it can also be replaced by homogeneous Lie groups with sub-Riemannian structures \citelist{\cite{MR2511628}\cite{MR3191973}\cite{MR3091819}}. See also \cite{MR3054337,MR3314065,MR2381898} for some related work.

\medbreak

In order to understand the geometric content of \eqref{ineqBBVS}, we propose to study on which geometric structures the estimate \eqref{ineqBBVS} does hold \emph{globally}. In the present work we prove \eqref{ineqBBVS} in the framework of symmetric spaces of non-compact type.

\begin{thm} \label{thm:main}
Let $(\M^m,g)$ be a Riemannian globally symmetric space of non-compact type, of real dimension $m$ and endowed with an invariant Riemannian metric $g$.
If $f$ is a smooth vector field on $\M$ with
$$
\div f = 0,
$$
then for any compactly supported smooth vector field $\phi$ on $\M$, we have
\begin{equation}\label{eq:thm}
\left| \int_{\M} \langle f, \phi \rangle dV \right| \leq C \Vert f\Vert_{L^1(dV)} \Vert \nabla \phi\Vert_{L^m(dV)}.
\end{equation}
\end{thm}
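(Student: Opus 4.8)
The plan is to realise $\M$ as a solvable Lie group by means of the Iwasawa decomposition, to recast the estimate \eqref{eq:thm} as a weighted version of \eqref{ineqBBVS} on $\mathbb{R}^m$, and then to establish the latter by adapting the slicing and integration-by-parts proof of \eqref{ineqBBVS} from \cite{MR2078071} and of its extensions to homogeneous groups \citelist{\cite{MR2511628}\cite{MR3191973}\cite{MR3091819}}. Writing $\M = G/K$ with $G$ connected semisimple of finite centre and $K$ maximal compact, and fixing an Iwasawa decomposition $G = NAK$, the map $(n,a)\mapsto naK$ identifies $\M$, as a Riemannian manifold, with the solvable group $S = NA$ carrying a left-invariant metric, the $G$-invariant volume $dV$ being then a left Haar measure on $S$. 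In the horospherical coordinates $(v,H)\in\n\times\a$, $(v,H)\mapsto\exp v\exp H$, one has $dV = e^{2\rho(H)}\,dv\,dH$ with $\rho = \frac{1}{2}\sum_{\alpha\in\Sigma^+}(\dim\g_\alpha)\,\alpha$, the submanifolds $\{v\}\times\a$ are totally geodesic maximal flats isometric to $\a\cong\mathbb{R}^\ell$ (with $\ell$ the rank), the submanifolds $\n\times\{H\}$ are horospheres carrying the nilpotent group structure of $N$, and the metric is ``warped'', the coordinate directions along $\g_\alpha$ having length comparable to $e^{\mp\alpha(H)}$ up to factors polynomial in $v$ reflecting the nilpotency of $N$. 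Since $\div_g f = \varrho^{-1}\div_{v,H}(\varrho f)$ for the coordinate components of $f$, where $\varrho\,dv\,dH = dV$, the hypothesis $\div f = 0$ is equivalent to the Euclidean condition $\div_{v,H}(\varrho f) = 0$; setting $\hat f = \varrho f$, the estimate \eqref{eq:thm} becomes a weighted form of \eqref{ineqBBVS} on $\mathbb{R}^m = \n\times\a$, with weights exponential in $H$ (at rates prescribed by the roots and $\rho$) and polynomial in $v$.

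The proof of this weighted inequality would proceed along the $m$ coordinate lines: the $\ell$ lines parallel to $\a$, which are geodesics sweeping out the totally geodesic flats, and the $\dim\n$ lines parallel to the root spaces, which remain inside single horospheres. As in \cite{MR2078071}, one writes each component of $\phi$ as a line integral of derivatives of $\phi$, substitutes into $\int_{\mathbb{R}^m}\langle \hat f,\phi\rangle$, moves derivatives onto $\hat f$ using $\div_{v,H}\hat f = 0$, and estimates the resulting multilinear expression by a Gagliardo--Nirenberg-type argument on the $(m-1)$-dimensional coordinate slices. Two features are new compared with the Euclidean case. Along the root-space directions the one-dimensional bound for $\phi$ picks up the factor $e^{\pm\alpha(H)}$ needed to turn a coordinate derivative into the Riemannian gradient $|\nabla_g\phi|_g$; and the relevant slices are horospheres with a nilpotent rather than abelian structure, which produces the polynomial-in-$v$ corrections already dealt with in \citelist{\cite{MR2511628}\cite{MR3091819}}. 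After integrating the slice estimates over $H\in\a$ against $e^{2\rho(H)}\,dH$, one must check that the $\dim\n$ powers of $e^{\pm\alpha(H)}$ from the warping, the power of $e^{2\rho(H)}$ from $dV$, and the mixed exponents produced by the Gagliardo--Nirenberg step combine so that the whole quantity is controlled by $\|\hat f\|_{L^1(dv\,dH)}\,\|\nabla\phi\|_{L^m(dV)}$, with no residual exponential factor in $H$.

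This last balancing of exponents is the heart of the matter, and the place where the structure of symmetric spaces --- rather than that of a general warped product --- is decisive: it succeeds precisely because $\rho$ is \emph{exactly} the half-sum of the positive roots counted with multiplicity, so that the powers of $e^{\pm\alpha(H)}$ coming from the warping of the $\dim\n$ horospherical directions are compensated, once raised to the powers imposed by the critical exponent $m = \dim\n + \ell$, by the volume growth $e^{2\rho(H)}$. Making this rigorous will require performing the integration over $\a$ one coordinate at a time (equivalently, an induction on the rank $\ell$), verifying that the intermediate quantities are again of the same type, with all root values and multiplicities correctly tracked; I expect this bookkeeping, together with the control of the polynomial-in-$v$ terms from the nilpotency of $N$, to be the main technical obstacle. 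One must also handle the fact that the horospheres are not totally geodesic: their constant mean curvature, and the zeroth-order term that distinguishes $\div_g$ from the Euclidean divergence, produce lower-order contributions in the slicing identities that have to be absorbed, using that $\phi$ has compact support.
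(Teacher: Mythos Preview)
Your plan is a genuinely different route from the paper's, and its central step --- the ``balancing of exponents'' you identify as the main obstacle --- is left entirely open; the paper's argument is organised precisely so that no such balance is ever needed. The paper does not fix one Iwasawa decomposition and work in horospherical coordinates. Instead it rewrites $\int_{\M}\langle f,\phi\rangle\,dV$ as an integral over $G\times\mathbb{S}^{m-1}$, reducing matters to estimating, for each direction $v_0\in\mathbb{S}^{m-1}\subset T_{x_0}\M$, the scalar pairing $\int_G\langle f(gx_0),d_gv_0\rangle\langle\phi(gx_0),d_gv_0\rangle\,dg$. For each $v_0$ it then chooses an Iwasawa decomposition $G=KAN$ \emph{adapted to} $v_0$, so that $v_0$ corresponds to a unit vector $H_1\in\a$. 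The Van~Schaftingen fundamental-theorem-of-calculus step is thus always performed along a geodesic direction in $\a$, and the codimension-one slice $S'=A'N$ (with $A'$ generated by the orthogonal complement of $H_1$ in $\a$) is a genuine Lie subgroup, on which integration by parts in the remaining left-invariant fields is clean. The only exponential factors that appear, namely $e^{t\alpha_j(H_1)}$ from commuting $Y_j$ past $e^{tH_1}$, are multiplied by a compactly supported cutoff $\chi(e^{tH_1})$ and are therefore bounded. The estimate is then transported by the one-parameter group $A_1$ and by $K$, combined via H\"older, and closed with a Hardy-type inequality $\|\phi\|_{L^m}\le C\|\nabla\phi\|_{L^m}$ on $\M$; uniformity of the constant over all $v_0$ comes from the conjugacy of Iwasawa decompositions under $K$.

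Your scheme instead tries to treat all $m$ coordinate directions in one fixed chart, and two concrete problems arise. First, a minor one: in the Van~Schaftingen mechanism it is $f$, not $\phi$, that is written as a line integral (with cutoff), so that $\div f=0$ converts $X_if^i$ into $-\sum_{\ell\ne i}X_\ell f^\ell$ plus lower order, before one integrates by parts onto the low-frequency piece $\Phi_2$; your description reverses this. Second, and decisively: when the distinguished direction is some $Y_j\in\g_{\alpha_j}$ rather than an element of $\a$, the divergence identity forces you to handle terms $H_if^i$ on a slice transverse to $Y_j$. Since $[H_i,Y_j]=\alpha_j(H_i)Y_j$, that slice is not a subgroup of $S$, and commuting the $H_i$-derivative through introduces factors $e^{\alpha_j(H)}$ with $H$ ranging over all of $\a$ --- now with no cutoff to tame them. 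Your hope is that summing over $j$ and invoking $2\rho=\sum_\alpha m_\alpha\alpha$ makes these cancel against the volume weight $e^{2\rho(H)}$, but that identity relates a \emph{sum} of roots to the density, whereas the unbounded exponentials attached to different $\alpha_j$ arise in \emph{separate} additive terms of $\sum_j\int\langle f,Y_j\rangle\langle\phi,Y_j\rangle$ and cannot interact multiplicatively without some further averaging. Supplying that averaging is exactly what the paper's integration over $\mathbb{S}^{m-1}$, together with the family of adapted Iwasawa decompositions, accomplishes.
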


Here we write $\div f$ the divergence of a vector field $f$ on the manifold $\M$ with respect to the metric $g$. Moreover, we write $\langle \cdot, \cdot \rangle$ for the pointwise inner product between two vectors or tensors on $\M$ with respect to $g$, and $dV$ for the volume measure of $\M$ with respect to $g$. We also write $\nabla$ for the Levi-Civita connection on $(\M,g)$, so that $\nabla$ sends a vector field on $\M$ to a $(1,1)$ tensor on $\M$. Finally, we denote $\Vert f\Vert_{L^1(dV)} := \int_{\M} |f| dV$, and $\Vert \nabla \phi\Vert_{L^m(dV)}:= \left( \int_{\M} |\nabla \phi|^m dV \right)^{1/m}$, where $|\cdot|$ denotes the norm of a vector or a tensor on $\M$ with respect to $g$.

One way of saying that the manifold $\M$ is a Riemannian globally symmetric space of non-compact type is to write it as $\M = G/K$, where $G$ is a non-compact connected semisimple real Lie group with finite center, and $K$ is a maximal compact subgroup of $G$. (In general, \(K\) need not be a normal subgroup, and thus \(G/K\) does not have a canonical Lie group structure.) So Theorem~\ref{thm:main} can be thought of as an extension of \eqref{ineqBBVS} to semisimple structures.
Examples of such spaces include the real and complex hyperbolic spaces, the space of positive definite $n \times n$ symmetric matrices with determinant $1$, etc.
The case of the hyperbolic space \(\mathbb{H}^n\) in Theorem~\ref{thm:main} was established by the authors in \cite{CvSY2016} with, as explained below, a different strategy of proof that takes profit of their richer structure.

The proof of Theorem~\ref{thm:main} follows the strategy of \cite{MR2078071} used to prove \eqref{ineqBBVS}.
In the Euclidean case, it is possible to foliate the space by parallel hyperplanes, prove estimates on each, and then integrate with respect to the foliation parameter. This foliation cannot be performed in general on a symmetric space. We prove instead an estimate on a codimension~\(1\) submanifold corresponding to a subgroup in the parametrization of \(\M\) by the Iwasawa decomposition of a group $G$, where $G$ is the identity component of the group of isometries of \(\M\). We then transport the estimate everywhere on \(\M\) by the action of the group \(G\). Thanks to the compactness of \(K\), the resulting quantities are finite and give the estimate.

It remains to explain how the estimate on the codimension \(1\) submanifold is obtained. The idea, following \cite{MR2078071}, is to first split \(\phi\) in a part which is bounded and another whose derivative is bounded. The integral corresponding to the first part is bounded trivially, and the remaining integral is bounded by integration by parts, using the divergence-free condition.
In our setting, the integration by parts relies on the assumption that $\M$ is of non-compact type, which precludes notably the presence of closed geodesics. The splitting argument is essentially the same as in the Euclidean space at smaller scales, where the geometries are comparable; at larger scales the geometry of \(\M\) plays again an important role.

The method in \cite{CvSY2016} there is different from what we have in the general case here: for instance, when $n = 2$, in \cite{CvSY2016} we would first obtain an estimate on all geodesics in $\mathbb{H}^2$, and then average it over all geodesics on $\mathbb{H}^2$. If we restrict our current proof to $\mathbb{H}^2$, we would be first integrating over horocircles in $\mathbb{H}^2$, and then averaging over all horocircles.

Finally we give an application of Theorem~\ref{thm:main} to the Laplace equation on $\M$. Let $\Delta = \div \circ \nabla$ be the Laplace-Beltrami operator on $\M$. It is known that the $L^2$ spectrum of $\M$ is bounded away from zero. Indeed, the $L^2$ spectrum consists of the half-line $(-\infty,-|\rho|^2]$ where $\rho$ is the half-sum of positive roots of an Iwasawa decomposition of $G$ \cite[Section 4.2]{MR1736928}. Thus one can consider negative powers of $-\Delta$, and $(-\Delta)^{-\gamma}$ will be bounded linear operators on $L^2(dV)$ for all $\gamma > 0$.

\begin{thm} \label{thm:Laplace_application}
Suppose $f$ is a smooth vector field on $\M$ with $\div f = 0$. If $\varphi$ is a compactly supported smooth vector field on $\M$ with $\|\varphi\|_{L^{\infty}(dV)}+\|\nabla \varphi\|_{L^{\infty}(dV)} \leq 1$, then the solution $u \in L^2(dV)$ of the equation
$$
\Delta u = \langle f, \varphi \rangle
$$
satisfies
$$
\|\nabla u\|_{L^{\frac{m}{m-1}}(dV)} \leq C \|f\|_{L^1(dV)}.
$$
\end{thm}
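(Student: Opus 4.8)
The plan is to deduce Theorem~\ref{thm:Laplace_application} from Theorem~\ref{thm:main} by a duality argument, exploiting the spectral gap of $\Delta$ on $L^2(dV)$. First I would set up the solvability: since the $L^2$ spectrum of $\Delta$ is contained in $(-\infty, -|\rho|^2]$, the operator $\Delta$ is invertible on $L^2(dV)$, so the equation $\Delta u = \langle f, \varphi\rangle$ has a unique solution $u = \Delta^{-1}\langle f, \varphi\rangle \in L^2(dV)$, provided the right-hand side $\langle f,\varphi\rangle$ lies in $L^2(dV)$. This membership should follow from $f \in L^1(dV)$ together with $\varphi$ compactly supported and bounded — on the compact support of $\varphi$ one can control $L^2$ by $L^1$ after noting $f$ is smooth (hence locally bounded), or more robustly one argues by density/approximation so that all manipulations below are justified on a dense class and then pass to the limit.

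Next I would identify $\|\nabla u\|_{L^{m/(m-1)}(dV)}$ by duality: since $m/(m-1)$ is the conjugate exponent of $m$, we have
\[
\|\nabla u\|_{L^{m/(m-1)}(dV)} = \sup \left\{ \left| \int_{\M} \langle \nabla u, \psi\rangle \, dV \right| : \psi \text{ a smooth compactly supported } (1,1)\text{-tensor field}, \ \|\psi\|_{L^m(dV)} \le 1 \right\}.
\]
For such a test tensor $\psi$, I would like to write $\psi = \nabla \phi + (\text{something orthogonal or negligible})$, but rather than solving such a system, it is cleaner to integrate by parts directly. For $\psi$ of the form $\nabla\phi$ with $\phi$ a compactly supported smooth vector field, one gets $\int_\M \langle \nabla u, \nabla\phi\rangle\, dV = -\int_\M \langle \Delta u, \phi\rangle\, dV$ — wait, this pairs a vector with a vector only after taking a trace, so more precisely one uses $\int_\M \langle \nabla u, \nabla \phi\rangle\,dV = -\int_\M (\operatorname{div}\nabla u)\cdot(\text{scalar})$ in the scalar case; since here $u$ is a scalar function, $\nabla u$ is a vector field and the natural pairing is with a vector field $\phi$, giving $\int_\M \langle \nabla u, \phi\rangle\, dV = -\int_\M u \,\div\phi\, dV$. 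To handle a general test vector field $\phi$, decompose $\phi = \phi_1 + \nabla q$ where $\div\phi_1 = 0$ — i.e. a Hodge/Helmholtz-type splitting with $q$ solving $\Delta q = \div\phi$, again solvable by the spectral gap. Then $\int_\M \langle \nabla u, \phi\rangle\, dV = \int_\M \langle \nabla u, \phi_1\rangle\, dV + \int_\M \langle \nabla u, \nabla q\rangle\, dV$. The second term equals $-\int_\M u\,\Delta q\, dV = -\int_\M u\,\div\phi\, dV$, and one must check this is controlled; the first term, with $\div\phi_1 = 0$, is $-\int_\M u\, \div\phi_1\, dV = 0$. Hmm — this suggests I should instead apply Theorem~\ref{thm:main} to the divergence-free field $f$ tested against a well-chosen $\phi$.

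The cleaner route: by duality, $\|\nabla u\|_{L^{m/(m-1)}} = \sup_\phi \bigl|\int_\M u\,\div\phi\,dV\bigr| / \|\nabla\phi\|_{L^m}$, roughly, over compactly supported $\phi$ — but to make this exact one needs a Poincaré-type or Sobolev control relating $\|\phi\|$ back to $\|\nabla\phi\|_{L^m}$, which is precisely where the symmetric space geometry (and the subtlety that $m$ is critical) enters. So instead I would argue: for any compactly supported smooth vector field $\phi$,
\[
\left| \int_\M \langle \nabla u, \phi\rangle\, dV \right| = \left| \int_\M u\, \div\phi\, dV\right| = \left| \int_\M (\Delta^{-1}\langle f,\varphi\rangle)\,\div\phi\, dV \right| = \left| \int_\M \langle f, \varphi\rangle \, \Delta^{-1}(\div\phi)\, dV\right|,
\]
using self-adjointness of $\Delta^{-1}$. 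Now set $\Phi := \varphi \cdot \Delta^{-1}(\div\phi)$, a vector field; then the last integral is $\int_\M \langle f, \Phi\rangle\, dV$, to which Theorem~\ref{thm:main} applies (once one checks $\Phi$ is admissible — smooth, and decaying enough that the proof of Theorem~\ref{thm:main} goes through, which may again require an approximation argument since $\Delta^{-1}(\div\phi)$ need not be compactly supported). This yields $|\int_\M\langle f,\Phi\rangle\,dV| \le C\|f\|_{L^1}\|\nabla\Phi\|_{L^m}$, and by the product rule $\nabla\Phi = (\nabla\varphi)\,\Delta^{-1}(\div\phi) + \varphi\otimes\nabla\Delta^{-1}(\div\phi)$, so $\|\nabla\Phi\|_{L^m} \lesssim \|\nabla\varphi\|_{L^\infty}\|\Delta^{-1}(\div\phi)\|_{L^m} + \|\varphi\|_{L^\infty}\|\nabla\Delta^{-1}(\div\phi)\|_{L^m}$. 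The hypothesis bounds $\|\varphi\|_{L^\infty}+\|\nabla\varphi\|_{L^\infty}\le 1$, so it remains to bound $\|\Delta^{-1}(\div\phi)\|_{L^m} + \|\nabla\Delta^{-1}(\div\phi)\|_{L^m}$ by $\|\nabla\phi\|_{L^m}$, which is a Calderón--Zygmund / Riesz-transform estimate on $\M$: the operators $\nabla\Delta^{-1}\div$ and $\Delta^{-1}\div$ composed appropriately are bounded on $L^m(dV)$ for $1 < m < \infty$ on symmetric spaces of non-compact type (this uses the spectral gap, which makes even the zeroth-order piece $\Delta^{-1}\div$ bounded — a feature absent in $\mathbb{R}^n$). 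I expect this last $L^m$-boundedness of the relevant Riesz-type transforms, together with the density/approximation argument needed to legitimately insert $\Phi$ into Theorem~\ref{thm:main}, to be the main technical obstacle; everything else is duality and the product rule. Taking the supremum over $\phi$ with $\|\nabla\phi\|_{L^m}\le 1$ then gives $\|\nabla u\|_{L^{m/(m-1)}} \le C\|f\|_{L^1}$, which is the claim.
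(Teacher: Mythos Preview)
Your approach is essentially the same as the paper's: duality, self-adjointness of $\Delta^{-1}$, application of Theorem~\ref{thm:main} to the vector field $\Phi=\varphi\cdot\Delta^{-1}(\div\phi)$, product rule, and Riesz-transform bounds on $\M$. The paper carries this out componentwise, testing $X_i u$ against a scalar $h$ and handling the commutator $[X_i,-\Delta]$ explicitly, but the skeleton is identical. Also, your worry about $\Phi$ not being compactly supported is unfounded: $\Phi$ inherits compact support from $\varphi$, exactly as in the paper.

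There is, however, a genuine slip in your duality bookkeeping. To recover $\Vert\nabla u\Vert_{L^{m/(m-1)}}$ you must bound $\bigl|\int_\M\langle\nabla u,\phi\rangle\,dV\bigr|$ by $C\Vert f\Vert_{L^1}\Vert\phi\Vert_{L^m}$ and take the supremum over $\Vert\phi\Vert_{L^m}\le 1$; taking the supremum over $\Vert\nabla\phi\Vert_{L^m}\le 1$, as you write, would only place $\nabla u$ in the dual of $\dot W^{1,m}$, not in $L^{m/(m-1)}$. Correspondingly, the estimate you need at the end is
\[
\Vert\Delta^{-1}(\div\phi)\Vert_{L^m}+\Vert\nabla\Delta^{-1}(\div\phi)\Vert_{L^m}\le C\Vert\phi\Vert_{L^m},
\]
not $\le C\Vert\nabla\phi\Vert_{L^m}$. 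The good news is that this stronger bound does hold: $(-\Delta)^{-1}\div$ is the $L^{m'}$-adjoint of $\nabla(-\Delta)^{-1}$, which is bounded on $L^p$ for $1<p<\infty$ by the spectral gap together with Lemma~\ref{lem:Hardy}; and $\nabla(-\Delta)^{-1}\div$ is bounded on $L^m$ once you combine $L^m$-boundedness of $X_iX_j(-\Delta)^{-1}$, $X_i(-\Delta)^{-1}$, $(-\Delta)^{-1}$ with the commutator identity $X_i(-\Delta)^{-1}X_j=X_iX_j(-\Delta)^{-1}+X_i(-\Delta)^{-1}[X_j,-\Delta](-\Delta)^{-1}$, where $[X_j,-\Delta]$ is a second-order operator with constant coefficients in the frame $\{X_\ell\}$. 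This commutator step is exactly what the paper writes out; you have swept it into the phrase ``Riesz-transform estimate,'' which is fine provided you recognize it is not entirely off-the-shelf. With the norm corrected to $\Vert\phi\Vert_{L^m}$, your argument goes through.
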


If one estimates the solution $u$ by $\|f\|_{L^p(dV)}$ for some $p$ slightly bigger than 1, then one could do so by standard elliptic theory. This fails at the borderline exponent $p=1$. The condition $\div f = 0$ is what makes the conclusion of Theorem~\ref{thm:Laplace_application} possible.
The assumption that $\varphi$ is compact supported is only used to allow for certain integration by parts, and does not appear in any quantitative way in the conclusion of the theorem.

\medskip

The paper will be organized as follows. In Section~\ref{sect:prelim}, we recall some basic notations about symmetric spaces, and some basic facts we will need, such as the Iwasawa decomposition, the Killing metric, the exponential coordinates for a system of vector fields, and some integration formulae on symmetric spaces. The reader will find a more detailed account of many of these facts in the works of Helgason \citelist{\cite{MR1834454}*{Chapters II, III, V, VI, IX}\cite{MR1790156}*{Chapter I}}, or Knapp \cite[Chapter VI]{MR1920389}. In the proof of Theorem~\ref{thm:main}, we need a decomposition lemma for functions on a Lie subgroup of the isometry group of $\M$; we isolate this in Section~\ref{sect:decomp}. The proof of Theorem~\ref{thm:main} is then given in Section~\ref{sect:proof}, and that of Theorem~\ref{thm:Laplace_application} in Section~\ref{sect:applications}.

\medskip

\noindent \textbf{Acknowledgements}. S.C. was partially supported by NSF grant DMS-1201474. J.V.S. was partially supported by the Fonds de la Recherche Scientifique-FNRS. P.-L.Y. was  partially supported by the Early Career Grant CUHK24300915 from the Hong Kong Research Grant Council. It is also our pleasure to thank Roe Goodman for pointing out for us an argument in Section~\ref{sect:expcoords}.

\section{Notations and Preliminaries} \label{sect:prelim}

\subsection{Assumptions} \label{sect:assump}

Let $(\M^m,g)$ be a Riemannian globally symmetric space. In other words, $\M$ is a connected Riemannian manifold, and at every point of $\M$, the geodesic reflection symmetry defines a global isometry of $\M$. Throughout this paper, we will fix a point $x_0 \in \M$. Write $G=I_0(\M)$ for the identity component of the isometry group of $\M$ (which we think of as acting on $\M$ on the left), and $K$ for the subgroup of $G$ that leaves $x_0$ fixed.
Then $\M$ is diffeomorphic to $G/K$ via the identification $g x_0 \simeq gK$.
We will write $s_{x_0}$ for the geodesic reflection symmetry at $x_0$, and $\sigma \colon G \to G$ for the involutive automorphism of $G$ given by
\begin{equation} \label{eq:sigma_def}
\sigma (g) = s_{x_0} g s_{x_0}, \quad g \in G.
\end{equation}
Let $\g$ and $\k$ be the Lie algebras of $G$ and $K$ respectively, and write $$\theta = (d\sigma)_e \colon \g \to \g$$ where $e$ is the identity of $G$. Then $\theta$ is an involutive automorphism of $\g$, and
$
\k = \{X \in \g \colon \theta X = X \}.
$
If
$
\p = \{X \in \g \colon \theta X = -X\},
$
then $\g$ is the direct sum
$$
\g = \k \oplus \p.
$$
Let $\pi \colon G \to G/K = \M$ be the natural projection map given by $\pi(g) = gK (\simeq gx_0)$,
and $(d\pi)_e \colon \g \to T_{x_0} \M.$
Then $(d\pi)_e \k = \{0\}$, and $(d\pi)_e$ maps $\p$ isomorphically onto $T_{x_0} \M$. The latter allows one to identify $T_{x_0}\M$ with $\p$. See \cite{MR1834454}*{Chapter IV, Theorem 3.3}.

In what follows we assume that $\M$ is of non-compact type. In other words, we assume that
\begin{enumerate}[(i)]
\item $\g$ is semisimple,
\item $\g$ is non-compact, and
\item the decomposition $\g = \k \oplus \p$ is a Cartan decomposition of $\g$.
\end{enumerate}

Condition (i) means that
the Killing form $B \colon \g \to \g$, defined by
\begin{equation} \label{eq:Killingformdef}
B(X,Y) = \operatorname{trace}\, ( \ad_{\g} X \circ \ad_{\g} Y )
\end{equation}
for $X, Y \in \g$, is non-degenerate on $\g$.
Condition (ii) means that
 the adjoint group $\operatorname{Int}(\g)$ of $\g$ is not compact; here the adjoint group $\operatorname{Int}(\g)$ is the connected Lie subgroup of $GL(\g)$ whose Lie algebra is $\ad(\g) := \{\ad_{\g} X \colon X \in \g\}$. Under the semisimple assumption in (i), this is equivalent
to saying that the Killing form $B$ is not strictly negative definite on $\g$.
Condition (iii) means that
$\theta \colon \g \to \g$ is a Lie algebra automorphism of $\g$, and that
the symmetric bilinear form
$$
B_{\theta}(X,Y) := -B(X,\theta Y)
$$
is positive definite on $\g$. In particular, the restriction of $B$ to $\k$ is negative definite, and the restriction of $B$ to $\p$ is positive definite.

From the assumption that $\M$ is of non-compact type, one can show that the map $\pi \circ \exp \colon \g \to G/K$ restricts to a diffeomorphism from $\p$ to $\M$; in particular, $\M$ is simply connected. See e.g. \cite{MR1834454}*{Chapter VI, Theorem 1.1}.

Another way of saying that $\M$ is a Riemannian globally symmetric space of non-compact type is to say that $\M = G/K$, where $G$ is a non-compact connected semisimple real Lie group with finite center and $K$ is a maximal compact subgroup of $G$.
We will not need to use this characterization in this paper.

We will denote by $r$ the rank of $\M$. In other words,
the maximal dimension of a flat, totally geodesic submanifold of $\M$ is~$r$.

\subsection{Iwasawa decompositions} \label{sect:Iwasawa}

To prove our main Theorem~\ref{thm:main}, we will use a family of Iwasawa decompositions of the semisimple Lie group $G = I_0(\M)$. Recall the Cartan decomposition $\g = \k \oplus \p$ of the real semi-simple Lie algebra $\g$. Let $\a$ be a maximal abelian subalgebra of $\g$ inside $\p$; the dimension of $\a$ is then $r$, since $\M$ has rank $r$. Let $\a^*$ be the set of all real linear functionals on $\a$. For each $\alpha \in \a^*$, let
$$
\g_{\alpha} := \bigl\{ X \in \g \colon \ad_{\g}(H)X = \alpha(H) X \text{ for all $H \in \a$} \bigr\}
$$
Then \begin{equation} \label{eq:comm_g_alpha}
[\g_{\alpha_1}, \g_{\alpha_2}] \subseteq \g_{\alpha_1 + \alpha_2}
\end{equation}
for any $\alpha_1, \alpha_2 \in \a^*$. Since $\{\ad_{\g}(H) \colon H \in \a\}$ is a family of commuting linear endomorphisms on $\g$, they can be simultaneously diagonalized, which gives
\begin{equation} \label{eq:sim_diag_g_alpha}
\g = \g_0 \oplus \bigoplus_{\alpha \in \Sigma} \g_{\alpha}
\end{equation}
where $\Sigma$ be the set of restricted roots of $\a$, i.e. the set of all non-zero $\alpha \in \a^*$ for which $\g_{\alpha} \ne \{0\}$. Let $\Sigma^+$ be a choice of positive roots of $\Sigma$, and $\n$ be the nilpotent Lie algebra given as the sum of the root spaces of $\Sigma^+$, i.e.
$$
\n := \bigoplus_{\alpha \in \Sigma^+} \g_{\alpha}.
$$
Then since $\g$ is real and semi-simple, we have the following Iwasawa decomposition of $\g$:
$$\g = \k \oplus \a \oplus \n.$$
Let $K$, $A$, $N$ be the Lie subgroups of $G$ generated by $\k$, $\a$ and $\n$. Then since $G$ is a connected semi-simple real Lie group, we have the following Iwasawa decomposition of $G$:
$$G = KAN,$$ in the sense that
\begin{align*}
K \times A \times N &\to G \\
(k,a,n) &\mapsto kan
\end{align*}
is a diffeomorphism of $K \times A \times N$ onto $G$. Since $g \mapsto g^{-1}$ is a diffeomorphism of $G$, it also follows that $G = NAK$. This allows one to identify $\M = G/K$ with $N \times A$, via the map
\begin{align}
N \times A &\to G/K = \M  \label{eq:NA=M}\\
(n,a) &\mapsto naK. \notag
\end{align}

In addition, since $\Ad a$ preserves $N$ for each $a \in A$ (in fact $\Ad a$ preserves $\g_{\alpha}$ for all $\alpha \in \Sigma$), it follows that $AN = NA$ is a Lie subgroup of $G$, and that the Iwasawa decomposition can be written $$G = ANK$$ as well.

The above Iwasawa decomposition of $G$ depends on the choice of a maximally abelian subalgebra $\a$ of $\g$ inside $\p$, and the choice of positive roots $\Sigma^+$ in $\Sigma$. In the proof of Theorem~\ref{thm:main}, we will need to use a family of different Iwasawa decompositions of $G$, and obtain uniform estimates for such. 
In particular, the constant $C$ in Lemma~\ref{lem:decompS'unif} needs to be independent of the choice of $\a$ and $\Sigma^+$ (or in other words, independent of the choice of $A$ and $N$ in the Iwasawa decomposition $KAN$ of $G$). This may be explained via the fact that all Iwasawa decompositions are conjugate to each other; for the latter fact, see for instance  \cite{MR1476489}*{Remark after Theorem 4.6}.

Note that $x_0 \in \M$ is fixed throughout our paper, and so is $K$ the stabilizer of $x_0$ in $G = I_0(\M)$. So we will just say that something is independent of the choice of Iwasawa decomposition $KAN$ of $G$, if it is independent of the choice of $A$ and $N$.

\subsection{The Killing form}

Let $G = I_0(\M)$, $\g$ be its Lie algebra. Recall the Killing form $B \colon \g \to \g$ (defined as in (\ref{eq:Killingformdef})). Let us note here two standard facts about $B$, that we will use later on. First,
\begin{equation} \label{eq:Kill_theta}
B(\theta X, \theta Y) = B(X,Y) \quad \text{for all $X, Y \in \g$}.
\end{equation}
This holds because $\theta$ is an automorphism of $\g$. Indeed, we have $$\ad_{\g} (\theta X) = \theta  \circ (\ad_{\g} X) \circ \theta^{-1}$$ for all $X \in \g$, so for any $X, Y \in \g$, we have
$$B(\theta X, \theta Y) = \operatorname{trace}( \theta \circ (\ad_{\g} X) \circ (\ad_{\g} Y) \circ \theta^{-1} ) = B(X,Y).$$
Also, the Killing form $B$ satisfies
\begin{equation} \label{eq:Kill_sym}
B([Z,X],Y) + B(X,[Z,Y]) = 0 \quad \text{for all $X, Y, Z \in \g$.}
\end{equation}
This holds because the Bianchi identity implies that
$$
\ad_{\g}[X,Y] = [\ad_{\g} X, \ad_{\g} Y] \quad \text{for all $X, Y \in \g$.}
$$

\subsection{The Killing metric} \label{sect:Kill_metric}

Recall $G = I_0(\M)$, the identity component of the isometry group of $(\M,g)$. The Riemannian metric $g$ on $\M$ is of course $G$-invariant. However, in general, \(\M\) can be endowed with infinitely many non-proportional \(G\)-invariant metrics.
We will now introduce one particularly convenient such metric $g_0$. It will be defined by a suitable restriction of the Killing form $B$ of $\g$, and we will call it the Killing metric on $\M$.

Indeed, recall we fixed, once and for all, a point $x_0 \in \M$. Let $\theta = (d\sigma)_e \colon \g \to \g$ be the involutive automorphism of $\g$, where $\sigma \colon G \to G$ is the automorphism of $G$ defined by (\ref{eq:sigma_def}), and let $\g = \k \oplus \p$ be the associated Cartan decomposition. Also recall the natural projection map $\pi \colon G \to G/K = \M$ given by $\pi(x) = gK = gx_0$. Then the restriction of $(d\pi)_e$ to $\p$ provides a linear isomorphism between $\p$ and $T_{x_0}\M$. Now we will define $g_0$ at $x_0$, by restricting the Killing form \(B\) to \(\p \times \p\) \cite{MR1834454}*{Chapter V, Section 5}. More precisely, given \(X, Y \in T_{x_0} \M\), we identify $X$, $Y$ with the corresponding vectors in $\p$ via this isomorphism, and define $g_0$ at $x_0$ by
\[
  g_0 (X, Y) = B (X, Y).
\]
Since the Killing form $B$ restricts to a positive definite symmetric bilinear form on $\p$, \(g_0\) at \(x_0\) is positive definite.
In order to extend this metric to the whole manifold \(M\) by the action of \(G\), we observe that \(g_0\) at \(x_0\) is invariant under the action of \(K\).
Indeed, it suffices to show that
$$
\left. \frac{d}{dt} \right|_{t=0} g_0( dL_{e^{tZ}} X, dL_{e^{tZ}} Y ) = 0
$$
whenever $Z \in \k$ and $X, Y \in T_{x_0}\M$, where $dL_g$ is the differential of the left action of $g$ on $\M$. To see this, it will be convenient, for a moment, not to identify $\p$ with $T_{x_0}\M$ via $(d\pi)_e$. So if $X,Y \in T_{x_0}\M$, let $X = (d\pi)_e \tilde{X}$ and $Y = (d\pi)_e \tilde{Y}$, where $\tilde{X}, \tilde{Y} \in \p$. Then for $Z \in \k$, $dL_{e^{tZ}}X$ is given by \begin{equation*}
\begin{split}
dL_{e^{tZ}}X &= \left. \frac{d}{ds} \right|_{s=0} e^{tZ} e^{s \tilde{X}} x_0 \\
 &= \left. \frac{d}{ds} \right|_{s=0}  e^{tZ} e^{s \tilde{X}} e^{-tZ} x_0
 = \left. \frac{d}{ds} \right|_{s=0} \pi(e^{tZ} e^{ s \tilde{X}} e^{-tZ}),
\end{split}
 \end{equation*}
 so $dL_{e^{tZ}}X = (d\pi)_e(\Ad(e^{tZ}) \tilde{X}) = (d\pi)_e(e^{t\ad Z} \tilde{X})$. Similarly $dL_{e^{tZ}}Y = (d\pi)_e(e^{t\ad Z} \tilde{Y})$. Thus
 $$
 \left. \frac{d}{dt} \right|_{t=0} dL_{e^{tZ}}X = (d\pi)_e[Z,\tilde{X}], \quad \text{and} \quad \left. \frac{d}{dt} \right|_{t=0} dL_{e^{tZ}}Y = (d\pi)_e[Z,\tilde{Y}].
 $$
 Since $[Z,\tilde{X}], [Z,\tilde{Y}] \in \p$, it follows that
\begin{multline*}
\left. \frac{d}{dt} \right|_{t=0} g_0( dL_{e^{tZ}} X, dL_{e^{tZ}} Y )\\
= g_0((d\pi)_e [Z,\tilde{X}],(d\pi)_e \tilde{Y})+ g_0((d\pi)_e \tilde{X},(d\pi)_e [Z,\tilde{Y}])\\
= B([Z,\tilde{X}],\tilde{Y}) + B(\tilde{X},[Z,\tilde{Y}]) = 0,
\end{multline*}
the last equality following from \eqref{eq:Kill_sym}.
This shows that \(g_0\) is invariant at \(x_0\) under the action of the group of isometries \(K\), and hence it can be extended to a unique \(G\)-invariant \(g_0\) metric on the whole space \(M \simeq G/K\).

Suppose now $G = KAN$ is an Iwasawa decomposition of $G$ as in the last subsection. We may then identify $\M$ with a Lie group, and calculate $g_0$ using this identification, as follows. Let $$S = AN = NA = \{na \in G \colon a \in A, n \in N\}.$$ Then $S$ is a closed subgroup of $G$, and the map $$\pi \colon G \to G/K \simeq \M$$ restricts to a diffeomorphism between $S$ and $\M$. One can thus identify $\M$ with the Lie group $S$ via this diffeomorphism. Upon this identification, every element in the Lie algebra $\s = \a \oplus \n$ of $S$ defines an $S$-invariant vector field on $\M$.
We claim now, that under this identification, we have for every \(X, Y \in \s\),
\begin{equation}
\label{eqKillingS}
 g_0 (X,Y) = B\left( \tfrac{X - \theta X}{2} , \tfrac{Y - \theta Y}{2} \right).
\end{equation}
Indeed, it suffices to check that $g_0$ at $x_0$ is given by the expression on the right. But $X \in \s$ corresponds to $(d\pi)_e X$ in $T_{x_0}\M$ under our identification of $S$ with $\M$, and $(d\pi)_e X$ corresponds to  $\frac{X - \theta X}{2}$ under our identification of $T_{x_0}\M$ with $\p$ (the latter following from that $$(d\pi)_e X = (d\pi)_e \left( \frac{X - \theta X}{2} \right), $$ i.e. $(d\pi)_e  \left( \frac{X + \theta X}{2} \right) = 0$, which in turn is a consequence of the fact that $\frac{X + \theta X}{2} \in \k$, the $1$-eigenspace of $\theta$).
So we see that \eqref{eqKillingS} holds,
regardless of which Iwasawa decomposition of $G$ we used to define $S$. By (\ref{eq:Kill_theta}), equation \eqref{eqKillingS} can also be written as
$$
g_0(X,Y) = \frac{B(X,Y)-B(\theta X, Y)}{2}
$$
for all $X,Y \in \s$.

The Killing metric $g_0$ may not be the same $G$-invariant Riemmanian metric $g$ we had on $\M$ nor be proportional to it; nevertheless, this is a convenient metric to use
for computations, for they enjoy certain orthogonality relations. Suppose $G = KAN$ is an Iwasawa decomposition of $G$, and $S = AN$ with Lie algebra $\s$. Then
\begin{equation} \label{eq:sortho}
\s = \a \oplus \n = \a \oplus \bigoplus_{\alpha \in \Sigma^+} \g_{\alpha}.
\end{equation}
Upon identifying $\M$ with $S$, so that the Killing metric $g_0$ defines a left-invariant metric on $S$, we claim that the decomposition \eqref{eq:sortho} is an orthogonal decomposition with respect to $g_0$. In other words, we have that
\begin{enumerate}[(a)]
\item \label{item:ortho1} $\a$ is orthogonal to $\g_{\alpha}$ with respect to $g_0$, for all $\alpha \in \Sigma^+$; and
\item \label{item:ortho2} $\g_{\alpha}$ is orthogonal to $\g_{\beta}$ with respect to $g_0$, whenever $\alpha, \beta \in \Sigma^+$ and $\alpha \ne \beta$.
\end{enumerate}

To see \eqref{item:ortho1}, let $H \in \a$, $X \in \g_{\alpha}$ with $\alpha \in \Sigma^+$. Then
$$
g_0(H,X) = \frac{B(H,X) - B(\theta H, X)}{2} = B(H,X) = 0,
$$
the second last equality following from $H \in \a \subset \p$ (so that $\theta H = -H$), and the last following from \eqref{eq:sim_diag_g_alpha} and \eqref{eq:comm_g_alpha}: indeed the two combined shows that there exists a basis of $\g$, with respect to which $\ad_{\g} H$ is represented by a diagonal matrix, and $\ad_{\g} X$ is represented by a lower triangular matrix. Thus $B(H,X) = \operatorname{trace} (\ad_{\g} H \circ \ad_{\g} X) = 0$.

To see \eqref{item:ortho2}, let $\alpha, \beta \in \Sigma^+$ with $\alpha \ne \beta$. Let $X \in \g_{\alpha}$, $Y \in \g_{\beta}$. Then
$$
g_0(X,Y) = \frac{B(X,Y) - B(\theta X, Y)}{2}.
$$
But $B(X,Y) = 0$, since as above, there exists a basis of $\g$, with respect to which both $\ad_{\g} X$ and $\ad_{\g} Y$ are represented by a lower triangular matrix. Also, $\theta X \in \g_{-\alpha}$, from which it follows that $\ad_{\g} (\theta X) \circ \ad_{\g} Y$ maps $\g_{\gamma}$ into $\g_{\gamma+\beta-\alpha}$ for all $\gamma \in \Sigma \cup \{0\}$. Since $\beta - \alpha \ne 0$, it follows that with respect to a suitable basis, $\ad_{\g} (\theta X) \circ \ad_{\g} Y$ is represented by a matrix which is zero on the diagonal, so $B(\theta X, Y) = 0$ as well, and this gives $g_0(X,Y) = 0$.

From the above orthogonality relations, it follows that we can find a basis $H_1, \dots, H_r$ of $\a$, and a basis $Y_1, \dots, Y_{m-r}$ of $\n = \bigoplus_{\alpha \in \Sigma^+} \g_{\alpha}$, such that each $Y_j \in \g_{\alpha_j}$ for some $\alpha_j \in \Sigma^+$, and such that the $H_i$'s and the $Y_j$'s together form an orthonormal basis of $\s = \a \oplus \n$ with respect to $g_0$. Such a basis will be called a \emph{good basis} of $\s$; we will also identify such with a frame of $S$-invariant vector fields on $\M$.
A good basis of $\s$ enjoys good orthogonality and commutativity conditions (they are orthonormal with respect to $g_0$, and they respect the commutator conditions given by \eqref{eq:comm_g_alpha}). As such they are particularly convenient for computations. We will also use it to help us formulate the uniform decomposition lemma in Section~\ref{sect:decomp}.

We note in passing that the restrictions of $g_0$ to $\a$ and $\n$ also yields positive definite inner products on $\a$ and $\n$, which we think of as left-invariant Riemannian metrics on $A$ and $N$ respectively. By abuse of notation, we will also call these the Killing metrics on $A$ and $N$, and denote them by the same symbol $g_0$.

Finally, we claim that it suffices to establish Theorem~\ref{thm:main} for any one choice of $G$-invariant metric on $\M$, for then the same conclusion will follow for all other $G$-invariant metrics on $\M$ (proof to follow in the next paragraph). So in proving Theorem~\ref{thm:main}, we will assume, without loss of generality, that \(g = g_0\) the Killing metric. This will allow us to take advantage of the orthogonality conditions of $g_0$ laid out earlier.

To verify our claim above, suppose we have proved Theorem~\ref{thm:main} for some $G$-invariant metric $g$ on $\M$, and suppose $\tilde{g}$ is another $G$-invariant metric on $\M$. If $f$ is a vector field on $\M$ with
$$
\textrm{div}_{\tilde{g}} f = 0,
$$ 
(here we use the subscript $\tilde{g}$ to denote the dependence on the metric), then we also have
\begin{equation} \label{eq:div_g_f_equals_0}
\textrm{div}_{g} f = 0.
\end{equation} 
To see this, note that 
$$
\textrm{div}_{g} f \, dV_g = \mathcal{L}_f (dV_g)
$$
where $\mathcal{L}_f$ denotes the Lie derivative with respect to the vector field $f$; this is a consequence of the Cartan formula for Lie derivatives of differential forms. Now $dV_g$ is just a multiple of $dV_{\tilde{g}}$, since both $dV_g$ and $dV_{\tilde{g}}$ are G-invariant volume forms, and the space of G-invariant volume forms is one-dimensional.
Also, 
$$
\mathcal{L}_f (dV_{\tilde{g}}) = \textrm{div}_{\tilde{g}} f \,  dV_{\tilde{g}} = 0 
$$
by assumption. Thus (\ref{eq:div_g_f_equals_0}) is verified.
Next, if $\phi$ is any smooth vector field on $\M$, we will find another smooth vector field $\tilde{\phi}$ on $\M$, such that
\begin{equation}  \label{eq:change_metric_corrected}
\langle f, \phi \rangle_{\tilde{g}} = \langle f, \tilde{\phi} \rangle_g.
\end{equation}
Indeed, if $\Phi$ is the $1$-form on $\M$ that one obtains by lowering the indices with the metric $\tilde{g}$, then $\langle f, \phi \rangle_{\tilde{g}} = \Phi(f)$; now we raise the indices with the metric $g$, we obtain from $\Phi$ a vector field $\tilde{\phi}$ that satisfies $\Phi(f) = \langle f, \tilde{\phi} \rangle_g$. Together we have (\ref{eq:change_metric_corrected}); indeed, in any local coordinates, we have
$$
\tilde{\phi}^i = g^{ij} \tilde{g}_{jk} \phi^k.
$$
Finally, from (\ref{eq:change_metric_corrected}), we have
$$
\left| \int_{\M} \langle f, \phi \rangle_{\tilde{g}} dV_{\tilde{g}} \right| = C \left| \int_{\M} \langle f, \tilde{\phi} \rangle_{g} dV_{g} \right|
$$
since $dV_{\tilde{g}}$ is a constant multiple of $dV_g$. Since $\textrm{div}_g f = 0,$ the right hand side can now be estimated with Theorem~\ref{thm:main} for $g$. 
Note that $$\Vert f\Vert_{L^1(dV_g)} = \int_{\M} |f|_g dV_g \simeq \int_{\M} |\tilde{f}|_{\tilde{g}} dV_{\tilde{g}} = \Vert \tilde{f}\Vert_{L^1(dV_{\tilde{g}})},$$ and
$$
|\nabla_g \tilde{\phi}|_g \lesssim |\nabla_g \phi|_g + |\phi|_g \simeq |\nabla_{\tilde{g}} \phi|_{\tilde{g}} + |\phi|_{\tilde{g}}
$$
(the latter following from that $\nabla_g = \nabla_{\tilde{g}}$; see \cite{MR1834454}, Chapter IV, Section 4, Corollary 4.3, or \cite{MR1809879}, Chapter I, Proposition 1.2.1), from which we obtain
\begin{align*}
\Vert \nabla_g \tilde{\phi} \Vert_{L^m(dV_g)} 
&= \left( \int_{\M} |\nabla_g \tilde{\phi}|_g^m dV_g \right)^{1/m} \\
& \lesssim \left( \int_{\M} |\nabla_{\tilde{g}} \phi|_{\tilde{g}}^m + |\phi|_{\tilde{g}}^m dV_{\tilde{g}} \right)^{1/m} \\
&\simeq \Vert \nabla_{\tilde{g}} \phi \Vert_{L^m(dV_{\tilde{g}})} + \|\phi\|_{L^m(dV_{\tilde{g}})}.
\end{align*}
We now estimate
$$
\|\phi\|_{L^m(dV_{\tilde{g}})} \lesssim \Vert \nabla_{\tilde{g}} \phi \Vert_{L^m(dV_{\tilde{g}})};
$$
this inequality will be established in Lemma~\ref{lem:Hardy} in the sequel.
Thus altogether, Theorem~\ref{thm:main} for $g$ gives that
$$
\left| \int_{\M} \langle f, \phi \rangle_{\tilde{g}} dV_{\tilde{g}} \right| 
\leq C \|f\|_{L^1(dV_{\tilde{g}})} \|\nabla_{\tilde{g}} \phi \|_{L^m(dV_{\tilde{g}})}
$$
as well, as desired.

\subsection{Computation of the divergence} \label{sect:div}

We now proceed to carry out some explicit calculations, for the divergence of a vector field $f$ on $\M$; this will help us make some computation more concrete in the proof of Theorem~\ref{thm:main}. We will identify $\M$ with a Lie group $S$ as before, and perform the computations in terms of a basis of left-invariant vector fields on $S$.

To this end, let $G = I_0(\M)$, $K$ be the stabilizer of $x_0 \in \M$, and $G = KAN$ be a corresponding Iwasawa decomposition. Let $S = AN$. As before, we identify $\M$ with $S$, and identify the Lie algebra $\s$ of $S$ with the space of $S$-invariant vector fields on $\M$. Note that $\s = \a \oplus \n$, where $\n = \bigoplus_{\alpha \in \Sigma^+} \g_{\alpha}$.

Suppose now we are given a basis $H_1, \dots, H_r$ of $\a$, and a basis $Y_1, \dots, Y_{m-r}$ of $\n$ such that each $Y_j \in \g_{\alpha_j}$ for some $\alpha_j \in \Sigma_+$. For convenience, we will write
\begin{equation} \label{eq:X_label}
\begin{cases}
X_1 = H_1, \quad \dotsc, \quad  X_r = H_r, \\
X_{r+1} = Y_1, \quad \dotsc, \quad X_m = Y_{m-r}
\end{cases}
\end{equation}
so that $\{X_1,\dots,X_m\}$ is a basis of $\s$. We think of it as a basis of $S$-invariant vector field on $\M$. A smooth vector field $f$ on $\M$ can then be expressed as a linear combination of $X_1, \dots, X_m$ (with variable coefficients):
$$
f = \sum_{\ell=1}^m f^{\ell} X_{\ell}
$$
where $f^1, \dots, f^m$ are smooth functions on $\M$. We may then compute the divergence of $f$ on $\M$ as follows.

First,
$$
\div f = \sum_{\ell=1}^m \left( X_{\ell} f^{\ell} + f^{\ell} \div X_{\ell} \right),
$$
so we need only compute $\div X_{\ell}$ for $1 \leq \ell \leq m$.
But $\div X_{\ell}$ is given by
\[
 \mathcal{L}_{X_{\ell}} \omega = (\operatorname{div} X_{\ell})\omega,
\]
where $\omega$ is the volume form of $(\M,g)$ with respect to $g$, and \(\mathcal{L}_{X_{\ell}} \omega\) denotes the \emph{Lie derivative} of \(\omega\) with respect to the vector field \({X_{\ell}}\). To compute $\mathcal{L}_{X_{\ell}} \omega$, note that for any vector field $Z$ on $\M$, we have, by the Leibnitz rule for contraction, that
\begin{equation} \label{eq:Lie_der}
\begin{split}
 &(\mathcal{L}_Z \omega) (H_1, \dotsc, H_r, Y_1, \dotsc, Y_{m - r}) \\
 &= \mathcal{L}_Z (\omega (H_1, \dotsc, H_r, Y_1, \dotsc, Y_{m - r})) \\
 &\quad- \omega (\mathcal{L}_Z H_1, \dotsc, H_r, Y_1, \dotsc, Y_{m - r}))\\
 &\qquad- \dotsb - \omega ( H_1, \dotsc, \mathcal{L}_Z H_r, Y_1, \dotsc, Y_{m - r})\\
 &\quad- \omega ( H_1, \dotsc, H_r, \mathcal{L}_Z  Y_1, \dotsc, Y_{m - r})\\
 &\qquad- \dotsb - \omega ( H_1, \dotsc, H_r, Y_1, \dotsc, \mathcal{L}_Z Y_{m - r}).
\end{split}
\end{equation}
Also, the first term on the right hand side is always zero, since both $\omega$ and $H_1, \dots, H_r, Y_1, \dots, Y_{m-r}$ are $S$-invariant.
We now apply \eqref{eq:Lie_der} with \(Z = H_\ell\), for \(\ell \in \{1, \dotsc, r\}\). Note that for \(i \in \{1, \dotsc, r\}\),
\[
 \mathcal{L}_{H_\ell} H_i = [H_\ell, H_i] = 0,
\]
and for \(j \in \{1, \dotsc, m - r\}\),
\[
  \mathcal{L}_{H_\ell} Y_j = [H_\ell, Y_j] = \alpha_j (H_\ell) Y_j.
\]
We thus conclude from \eqref{eq:Lie_der} that
\[
 \mathcal{L}_{H_\ell} \omega = -\sum_{j=1}^{m - r} \alpha_j (H_\ell) \omega.
\]
It follows that
\[
 \operatorname{div} H_\ell = -2 \rho(H_\ell),
\]
where
$\rho$ is the half sum of positive roots, given by
\begin{equation} \label{eq:rhodef}
\rho:= \frac{1}{2} \sum_{\alpha \in \Sigma^+} \alpha,
\end{equation}
Next we apply \eqref{eq:Lie_der} with $Z = Y_{\ell}$, for \(\ell \in \{1, \dotsc, m - r\}\). Note that for each  \(i \in \{1, \dotsc, r\}\),
we have
\[
 \mathcal{L}_{Y_\ell} H_i = [Y_\ell, H_i] = -\alpha_\ell (H_i) Y_\ell,
\]
and thus, by antisymmetry,
\[
 \omega ( H_1, \dotsc, \mathcal{L}_{Y_\ell} H_i, \dotsc, H_r, Y_1, \dotsc, Y_{m - r}) = 0,
\]
On the other hand, if \(j \in \{1, \dotsc, m - r\}\), then
\[
 \mathcal{L}_{Y_\ell} Y_j = [Y_\ell, Y_j],
\]
which can be written as a linear combination of \(Y_1, \dotsc, Y_{j - 1}, Y_{j + 1}, \dotsc, Y_{m - r}\), and thus
\[
 \omega ( H_1, \dotsc, H_r, Y_1, \dotsc, \mathcal{L}_{Y_\ell} Y_j, \dotsc, Y_{m - r})= 0.
\]
Hence we conclude from \eqref{eq:Lie_der} that $\mathcal{L}_{Y_\ell} \omega = 0$. It follows that
\[
 \operatorname{div} Y_j = 0.
\]
Together we see that
\begin{equation} \label{eq:divf_explicit}
\div f = -\sum_{i=1}^r 2\rho(H_i) f^i + \sum_{\ell=1}^m X_{\ell}(f^{\ell}), \quad \text{if $f = \sum_{\ell=1}^m f^{\ell} X_{\ell}$}.
\end{equation}

\subsection{Covariant derivatives of a frame} \label{sect:covar}

Let $G = I_0(\M)$, $K$ be the stabilizer of a point $x_0 \in \M$, and $G = KAN$ be a corresponding Iwasawa decomposition. Let $H_1, \dots, H_r, Y_1, \dots, Y_{m-r}$ be a good basis of $\s = \a \oplus \n$, as defined in Section~\ref{sect:Kill_metric}. In particular, each $Y_j \in \g_{\alpha_j}$ for some $\alpha_j \in \Sigma^+$.
For convenience, we will again label these $H_i$'s and $Y_j$'s as $X_1, \dots, X_m$ as in \eqref{eq:X_label}, and we think of them as a basis of $S$-invariant vector fields on $\M$. Suppose now $H \in \a$ (again identified with an $S$-invariant vector field on $\M$). For later purposes (c.f. Lemma~\ref{lem:Hardy} below), we will prove now that
\begin{equation} \label{eq:parallel}
\nabla_{H} X_{\ell} = 0  \quad \text{for $1 \leq \ell \leq m$}.
\end{equation}

Here $\nabla$ is the Levi-Civita connection with respect to any $G$-invariant metric on $\M$ (as explained before, the Levi-Civita connection is actually independent of which $G$-invariant metric we put on $\M$). In particular, we will compute using the Killing metric $g_0$ on $\M$.
So for any smooth vector fields $X, Y, Z$ on $\M$, we have
\begin{align*}
g_0(\nabla_X Y, Z) = \frac{1}{2} ( & X[g_0(Y,Z)] + Y[g_0(X,Z)] - Z[g_0(X,Y)] \\
& \quad  + g_0([X,Y],Z) - g_0([X,Z],Y) - g_0([Y,Z],X) ).
\end{align*}
If in addition, $X, Y, Z$ are $S$-invariant, then by the left-invariance of the metric $g_0$, the first three terms above are zero, and we get
$$
g_0(\nabla_X Y, Z) = \frac{1}{2} \left( g_0([X,Y],Z) - g_0([X,Z],Y) - g_0([Y,Z],X) \right).
$$

We are now ready to prove \eqref{eq:parallel}. Indeed, fix $H \in \a$, and $1 \leq i \leq r$. We will first show that
\begin{equation} \label{eq:parallelH}
\nabla_{H} H_i = 0,
\end{equation}
by showing that
$$
g_0(\nabla_{H} H_i, H_{\ell}) = g_0(\nabla_{H} H_i, Y_j) = 0
$$
for all $1 \leq {\ell} \leq r$, $1 \leq j \leq m-r$.
But for $1 \leq {\ell} \leq r$, we have
\[
\begin{split}
g_0(\nabla_{H} H_i, H_{\ell}) &= \frac{1}{2} \left(g_0([H,H_i],H_{\ell}) - g_0([H, H_{\ell}], H_i) - g_0([H_i,H_{\ell}], H) \right) \\
&= 0.
\end{split}\]
Next, for $1 \leq j \leq m-r$, we have
\[
\begin{split}
g_0(\nabla_{H} H_i, Y_j) &= \frac{1}{2} \left(g_0([H,H_i],Y_j) - g_0([H, Y_j], H_i) - g_0([H_i,Y_j], H) \right) \\
&= 0,
\end{split}
\]
where in the last equality we used that $[H,Y_j] = \alpha_j(H) Y_j$ is orthogonal to $H_i$ with respect to $g_0$, and similarly that $[H_i,Y_j]$ is orthogonal to $H$ with respect to $g_0$. This proves \eqref{eq:parallelH}.

Next, fix $H \in \a$, and $1 \leq j \leq r$. We will show that
\begin{equation} \label{eq:parallelX}
\nabla_{H} Y_j = 0,
\end{equation}
by showing that
$$
g_0(\nabla_{H} Y_j, H_i) = g_0(\nabla_{H} Y_j, Y_{\ell}) = 0
$$
for all $1 \leq i \leq r$, $1 \leq {\ell} \leq m-r$.
But for $1 \leq i \leq r$, we have
\begin{equation*}
\begin{split}
g_0(\nabla_{H} Y_j, H_i)
&= \frac{1}{2} \left(g_0([H,Y_j], H_i) - g_0([H, H_i], Y_j) - g_0([Y_j,H_i], H) \right) \\
&= \frac{1}{2} \left(\alpha_j(H) g_0(Y_j,H_i) - 0 + \alpha_j(H_i) g_0(Y_j,H)  \right) \\
&= 0.
\end{split}
\end{equation*}
Here we used that $Y_j$ is orthogonal to $H_i$ and $H$ with respect to $g_0$.
Similarly, for $1 \leq {\ell} \leq m-r$, we have
\[
\begin{split}
g_0(\nabla_{H} Y_j, Y_{\ell})
&= \frac{1}{2} \left(g_0([H,Y_j],Y_{\ell}) - g_0([H, Y_{\ell}], Y_j) - g_0([Y_j,Y_{\ell}], H) \right) \\
&= \frac{1}{2} \left(\alpha_j(H) \delta_{j\ell} - \alpha_{\ell}(H) \delta_{j\ell} - 0  \right) \\
&= 0,
\end{split}
\]
the second-to-last equality following since $[Y_j,Y_{\ell}] \in \g_{\alpha_j+\alpha_{\ell}} \subset \n$ whereas $H \in \a$.
The two identities together verify \eqref{eq:parallelX}. \eqref{eq:parallel} then follows from \eqref{eq:parallelH} and \eqref{eq:parallelX}.

\subsection{Exponential coordinates} \label{sect:expcoords}

Let $G = I_0(\M)$, and $G = KAN$ be an Iwasawa decomposition of $G$. Another fact we will need is that $N$ is always \emph{graded}. This means that the Lie algebra $\n$ of $N$ is a direct sum of subspaces
$$
\n = \bigoplus_{k=1}^{s} V_k,
$$
and $[V_j,V_k] \subset V_{j+k}$ for all positive integers $j, k$ ($V_k$ is set to zero if $k > s$); see for instance \cite{MR0367477}.
To see that this is the case, let $\Sigma_1$ be the set of all simple roots (these are positive roots that cannot be decomposed as a sum of two positive roots). It is known that every root $\alpha \in \Sigma^+$ can be written uniquely as a linear combination
$$
\alpha = \sum_{\beta \in \Sigma_1} c_{\beta} \beta,
$$
where each $c_{\beta}$ is a non-negative integer, and not all $c_{\beta}$ in this sum are zero. We then define $d(\alpha)$ to be the positive integer given by the sum of all coefficients $c_{\beta}$ in the above expansion.
For each positive integer $k$, let
$$\Sigma_k := \{ \alpha \in \Sigma^+ \colon d(\alpha) = k\},$$
and let
$$
V_k = \bigoplus_{\alpha \in \Sigma_k} \g_{\alpha}.
$$
It then follows from \eqref{eq:comm_g_alpha} that $[V_j,V_k] \subset V_{j+k}$ for all positive integers $j$ and $k$, and that
$$
\n = \bigoplus_{k=1}^{s} V_k
$$
where $s$ is the largest positive integer for which $V_s \ne \{0\}$.
We are grateful to Roe Goodman for pointing out to us the above argument.

We now introduce exponential coordinates on $N$. Let $\{Y_1,\dots,Y_{m-r}\}$ be a basis of $\n = \bigoplus_{\alpha \in \Sigma^+} \g_{\alpha}$, so that each $Y_j$ belongs to $\g_{\alpha_j}$ for some $\alpha_j \in \Sigma^+$. This is compatible with the grading we had above for $\n$, in the sense that each $Y_j$ belongs to some $V_k$ with $1 \leq k \leq s$: indeed since $\g_{\alpha_j} \subset V_k$ when $k = d(\alpha_j)$, we have $Y_j \in V_{d(\alpha_j)}$ for each $1 \leq j \leq m-r$. Recall now $\M$ is diffeomorphic to $S = AN$, and that $\M$ is simply connected. Thus both $A$ and $N$ are simply connected; as a result, the exponential map $\exp \colon \mathfrak{n} \to N$ is a diffeomorphism of $\n$ onto $N$. Thus we may identify $\exp \left(\sum_{j=1}^{m-r} y^j Y_j \right) \in N$ with $y = (y^1,\dots,y^{m-r}) \in \mathbb{R}^{m-r}$. This is called the exponential coordinates on $N$, associated to the basis $\{Y_1,\dots,Y_{m-r}\}$ of $\n$. We attach a homogeneity $d(\alpha_j)$ to the coordinate $y^j$, for $1 \leq j \leq m-r$. A polynomial in $y$ is then said to be of non-isotropic homogeneous of degree $D$, if it is a linear combination of monomials of the form $y^{j_1} \dots y^{j_{\gamma}}$, with
$$
d(\alpha_{j_1}) + \dots + d(\alpha_{j_{\gamma}}) = D.
$$
Recall that
$$
Y_{j} = \left. \frac{d}{ds} \right|_{s=0} \exp \left(\sum_{\ell=1}^{m-r} y^{\ell} Y_{\ell} \right) \exp(s Y_{j}),
$$
Hence by the Baker-Campbell-Hausdorff formula, there are polynomials $p_{j \ell}(y)$ on $\mathbb{R}^{m-r}$, such that in the coordinates $y$ we had above for $N$, we have
\begin{equation} \label{eq:Ynormalexplicit}
Y_{j} = \sum_{\ell = 1}^{m-r} p_{j {\ell}}(y) \frac{\partial}{\partial y^{\ell}}
\end{equation}
for all $1 \leq j \leq m-r$. Furthermore, each $p_{j {\ell}}$ are non-isotropic homogeneous of degree $d(\alpha_{\ell})-d(\alpha_{j})$ if $d(\alpha_{\ell}) \geq d(\alpha_{j})$, and of degree zero otherwise; in particular,
\begin{equation} \label{eq:Ynormalexplicit2}
\frac{\partial}{\partial y^{\ell}} p_{j \ell}(y) \equiv 0
\end{equation}
for all $1 \leq j, \ell \leq m-r$.
We also have
\begin{equation} \label{eq:nor_center}
p_{j {\ell}}(0) = \delta_{j \ell}.
\end{equation}
See e.g. Rothschild and Stein \cite[Section 10]{MR0436223} for more details.

We remark that if in addition, the basis $\{Y_1, \dots, Y_{m-r}\}$ of $\n$ is orthonormal with respect to the Killing metric $g_0$, then the coefficients of the polynomials $p_{j\ell}(y)$ in \eqref{eq:Ynormalexplicit} are uniformly bounded, with a constant that is
independent of the choice of the basis $\{Y_1, \dots, Y_{m-r}\}$ of $\n$. This is because the iterated commutators of $Y_1, \dots, Y_{m-r}$ have norms (with respect to $g_0$) that are uniformly bounded,
independent of the choice of the basis $\{Y_1, \dots, Y_{m-r}\}$ of $\n$. In particular, this would be the case, if $\{Y_1, \dots, Y_{m-r}\}$ is part of a good basis of $\s = \a \oplus \n$, which we defined in Section~\ref{sect:Kill_metric}.

In a similar manner to the exponential coordinates on $N$, we introduce exponential coordinates on $A$. Let $\{H_1, \dots, H_r\}$ be a basis of $\a$. Then the exponential map $\exp \colon \a \to A$ is a diffeomorphism of $\a$ onto $A$. Thus we may identify $\exp \left( \sum_{i=1}^r t^i H_i \right) \in A$ with $t = (t^1, \dots, t^r) \in \mathbb{R}^r$. This is called exponential coordinates on $A$, associated to the basis $\{H_1, \dots, H_r\}$ of $\a$.

\subsection{Some integration formulae} \label{sect:int}

Let $G = I_0(\M)$, and $K$ be the stabilizer in $G$ of $x_0 \in \M$. Let $G = KAN$ be an Iwasawa decomposition of $G$. We will normalize the Haar measures on $K$, $A$, $N$ and $G$ as follows.

First, let $dk$ be the Haar measure on $K$, normalized so that $\int_K dk = 1$. This is possible since $K$ is compact.

Next, recall the Killing metrics on $A$ and $N$, that we introduced in Section~\ref{sect:Kill_metric}. These left-invariant Riemannian metrics on $A$ and $N$ define Riemannian volume forms, which are Haar measures on $A$ and $N$ respectively. We denote these by $da$ and $dn$ respectively.

More explicitly, let $H_1, \dots, H_r$ be an orthonormal basis of $\a$ with respect to the Killing metric $g_0$. Let $t=(t^1, \dots, t^r)$ be the exponential coordinates on $A$ associated to this basis $H_1, \dots, H_r$, as introduced in the last subsection. Let $da$ then be the Haar measure on $A$, normalized so that $da$ is the Lebesgue measure $dt$ in these exponential coordinates; i.e.
$$\int_A f(a) da = \int_{\mathbb{R}^r} f\left(\exp\left(\sum_{i=1}^r t^i H_i \right) \right) dt,$$
for any compactly supported continuous functions $f$ on $A$.

Similarly, let $Y_1, \dots, Y_{m-r}$ be an orthonormal basis of $\n$ with respect to the Killing metric $g_0$. Let $y=(y^1, \dots, y^{m-r})$ be the exponential coordinates on $N$ associated to this basis $Y_1, \dots, Y_{m-r}$. Let then $dn$ be the Haar measure on $N$, normalized so that $dn$ is the Lebesgue measure $dy$ in these exponential coordinates; i.e.
$$\int_N f(n) dn = \int_{\mathbb{R}^{m-r}} f\left(\exp\left(\sum_{j=1}^{m-r} y^j Y_j \right) \right) dy,$$
for any compactly supported continuous functions $f$ on $N$.

Finally, we define $dg$ to be the Haar measure on $G$, normalized in such a way that the following  two integration formula
of Harish-Chandra \cite[Lemma 35]{MR0056610} hold: for any compactly supported continuous function $F$ on $G$, we have
\begin{equation} \label{eq:d(ank)}
\int_G F(g) dg = \int_K \int_A \int_N F(ank) \,dn\,da\,dk,
\end{equation}
and
\begin{equation} \label{eq:d(kan)}
\int_G F(g) dg = \int_K \int_A \int_N F(kan) e^{2\rho(\log a)} \,dn\,da\,dk.
\end{equation}
Here
$\rho$ is the half sum of positive roots given by \eqref{eq:rhodef}, and
for each $a \in A$, $\log a$ is the element in the Lie algebra $\a$ of $A$ for which $e^{\log a} = a$.
Indeed, \eqref{eq:d(kan)} can be deduced from \eqref{eq:d(ank)} as follows: if $F$ is a compactly supported continuous function on $G$, then
$$
\int_G F(g) dg = \int_G F(g^{-1}) dg = \int_K \int_A \int_N F(k^{-1} n^{-1} a^{-1}) \,dn\,da\,dk
$$
where in the last equality, we have applied \eqref{eq:d(ank)} to the function $g \mapsto F(g^{-1})$. This shows
\begin{equation} \label{eq:d(kna)}
\int_G F(g) dg = \int_K \int_A \int_N F(kna) \,dn\,da\,dk,
\end{equation}
and upon the change of variables $n \mapsto a n a^{-1}$, we get
\begin{equation} \label{eq:d(kanAd)}
\int_G F(g) dg = \int_K \int_A \int_N F(kan) \det(\left. (\Ad a) \right|_{\n}) \,dn\,da\,dk.
\end{equation}
Here for each $a \in A$, we think of $\Ad a$ as a linear map in $GL(\g)$, which restricts to a map
\begin{align*}
\left. \Ad a \right|_{\n} & \colon \n \to \n \\
\left. \Ad a \right|_{\n}(Y) &= \left. \frac{d}{ds} \right|_{s=0} a e^{sY} a^{-1}.
\end{align*}
Note also that we could also have written
\begin{equation} \label{eq:e2rho}
e^{2\rho(\log a)} = \det(\left. (\Ad a) \right|_{\n});
\end{equation}
this holds because $\Ad a = e^{\ad (\log a)}$, from which it follows that $$\det(\left. (\Ad a) \right|_{\n}) = e^{\operatorname{trace} (\left. \ad (\log a) \right|_{\n} )} = e^{2\rho(\log a)}.$$
\eqref{eq:d(kanAd)} and \eqref{eq:e2rho} together gives \eqref{eq:d(kan)}.

We note here that the definition of $dg$ does not depend on the choice of the Iwasawa decomposition $KAN$ of $G$. Indeed, if $G = KA_0N_0$ is another Iwasawa decomposition of $G$, then there exists $k \in K$ such that $A = kA_0k^{-1}$, and $N=kN_0k^{-1}$; see for instance Remark after Theorem 4.6 of \cite{MR1476489}. If $da_0$ and $dn_0$ are the normalized Haar measures on $A_0$ and $N_0$ respectively, then $da$ is the pull-back of $da_0$ under the map $\Ad(k) \colon A \to A_0$, and $dn$ is the pull-back of $dn_0$ under the map $\Ad(k) \colon N \to N_0$. Thus
$$
\int_K \int_A \int_N F(ank) \,dn\,da\,dk = \int_K \int_{A_0} \int_{N_0} F(a_0 n_0 k) \,dn_0\, da_0 \,dk
$$
for all continuous functions $F$ with compact support in $G$, and this shows that $dg$ is well-defined independent of the choice of the Iwasawa decomposition of $G$.

Now consider the Lie subgroup $S=AN=NA$ of $G$. We define a Haar measure $ds$ on $S$, by
$$
\int_S F(s) ds = \int_A \int_N F(an) dnda
$$
for all compactly supported continuous functions $F$ on $S$.
To see that this indeed defines a Haar measure on $S$, it suffices to show that the linear functional on the right hand side above is invariant under both left translations by $A$ and $N$. But for any $a_0 \in A$, we have
$$
\int_A \int_N F(a_0an) \,dn\,da = \int_A \int_N F(an) \,dn\,da,
$$
since $da$ is a Haar measure on $A$. Also, for any $n_0 \in N$, we have
$$
\int_A \int_N F(n_0an) \,dn\,da = \int_A \int_N F(a (a^{-1} n_0 a) n) \,dn\,da = \int_A \int_N F(an) \,dn\,da,
$$
the last equality following from that $a^{-1} n_0 a \in N$, and from that $dn$ is a Haar measure on $N$. This shows that $ds$ is indeed a Haar measure on $S$, and we call it the normalized Haar measure on $S$ (since $da$ and $dn$ are normalized by our convention).

The Lie subgroup $S$ of $G$ can be identified with our symmetric space $\M$, as we have seen at the beginning of Section~\ref{sect:Kill_metric}. Recall that we had a Riemannian metric $g$ on $\M$ in the statement of Theorem~\ref{thm:main}, and that we are assuming that $g = g_0$ the Killing metric. There all integration are with respect to the Riemannian volume element $dV$. But $dV$ can be identified, through the identification of $\M$ with $S$, with a Haar measure on $S$. Since the space of Haar measure on $S$ is one-dimensional, this shows that there is a non-zero constant $c_0$, for which $dV = c_0 ds$. By examining the two sides near $x_0$, we see that $c_0 = 1$. In other words,
if $F_0$ is a continuous function with compact support on $\M$, then
\begin{equation} \label{eq:intAN=intM}
\int_{\M} F_0 \, dV
= 
\int_A \int_N F_0(anx_0) \,dn\,da.
\end{equation}
Note that \eqref{eq:intAN=intM} can also be written as
\begin{equation} \label{eq:intNA=intM}
\int_{\M} F_0 \, dV
= 
\int_A \int_N F_0(nax_0) e^{-2\rho(\log a)} \,dn\,da.
\end{equation}
Indeed, this follows from \eqref{eq:intAN=intM} upon the change of variable $n \mapsto a^{-1} n a$, using \eqref{eq:e2rho}.

Furthermore, we claim that if $F_0$ is a continuous function with compact support on $\M$, then
\begin{equation} \label{eq:dgK=dV}
\int_{\M} F_0 \, dV
= 
\int_G F_0(gx_0) dg.
\end{equation}
Indeed it suffices to compute the right hand side using \eqref{eq:d(ank)}, and to note that $kx_0 = x_0$ for all $k \in K$, while $\int_K dk = 1$. \eqref{eq:dgK=dV} then follows from \eqref{eq:intAN=intM}.

Suppose now $H_1$ is a non-zero vector in $\a$. Let $\a'$ denote the orthogonal complement to $H_1$ in $\a$ with respect to the Killing metric $g_0$, and let $A'$ be the Lie subgroup of $A$ generated by $\a'$. Let $\s' = \a' \oplus \n$, and $S'=A'N =NA'$. Then completely analogous to what we have done above, we can introduce normalized Haar measures $da'$ and $ds'$ on $A'$ and $S'$ respectively; indeed $da'$ will be the volume element, given by the left-invariant Riemannian metric on $A'$, that one obtains by restricting the Killing metric $g_0$ to $\a'$, and $ds'$ will be given by
$$
\int_{S'} F(s') ds' = \int_{A'} \int_N F(a'n) da' dn.
$$
We can also introduce exponential coordinates on $A'$ as in the last subsection. If $\{H_2, \dots, H_r\}$ is a basis of $\a'$, then the exponential map $t'=(t^2, \dots, t^r) \mapsto \exp(\sum_{i=2}^r t^i H_i)$ defines a diffeomorphism of $\mathbb{R}^{r-1}$ onto $A'$. If further $\{H_2,\dots,H_r\}$ is orthonormal with respect to the Killing metric $g_0$, then in such exponential coordinates, $da'$ is just the Lebesgue measure $dt'$.

Finally, we need two well-known integration by parts lemma. Suppose first $A'$ and $da'$ are defined as above. Let $\{H_2, \dots, H_r\}$ be a basis of left-invariant vector fields on $A'$.
\begin{lem} \label{lem:intHi}
For any function $\varphi \in C^{\infty}_c(A')$, we have
$$
\int_{A'} H_i \varphi \, da' = 0
$$
for $2 \leq i \leq r$.
\end{lem}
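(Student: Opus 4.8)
The plan is to reduce the statement to the fundamental theorem of calculus, using the exponential coordinates on $A'$ set up immediately before the lemma. First I would observe that $\a\subset\p$ is abelian, so its subspace $\a'$ is an abelian Lie subalgebra and $A'$ is an abelian Lie group; since $\M$ is simply connected, so is $A'$, and hence $\exp\colon\a'\to A'$ is a diffeomorphism. Fixing the basis $\{H_2,\dots,H_r\}$ of $\a'$ and the associated exponential coordinates $t'=(t^2,\dots,t^r)\in\mathbb{R}^{r-1}$, with $t'$ corresponding to $\exp(\sum_{i=2}^r t^iH_i)$, one checks — again because $A'$ is abelian, so that $\exp$ is a group homomorphism and left translation is ordinary translation in these coordinates — that the left-invariant vector field on $A'$ determined by $H_i$ is exactly $\partial/\partial t^i$. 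As already recorded above, in these coordinates the normalized Haar measure $da'$ is Lebesgue measure $dt'$.

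With this identification in hand, for $\varphi\in C^\infty_c(A')$ one has
$$\int_{A'}H_i\varphi\,da' = \int_{\mathbb{R}^{r-1}}\frac{\partial\varphi}{\partial t^i}(t')\,dt',$$
and the right-hand side vanishes by Fubini's theorem together with the fundamental theorem of calculus, since $\varphi$ has compact support. That finishes the proof.

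If a coordinate-free phrasing is preferred, the identity follows from unimodularity of $A'$: put $\psi(s):=\int_{A'}\varphi(a'\exp(sH_i))\,da'$; since $A'$ is abelian it is unimodular, so $da'$ is right-invariant and $\psi(s)=\psi(0)$ for every $s$; differentiating at $s=0$ and moving the derivative inside the integral — which is legitimate because $\varphi$ is smooth with compact support — gives $0=\psi'(0)=\int_{A'}(H_i\varphi)\,da'$, where we used that $(H_i\varphi)(a')=\frac{d}{ds}\big|_{s=0}\varphi(a'\exp(sH_i))$ for the left-invariant vector field associated to $H_i$.

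There is no genuine obstacle in this lemma. The only two points deserving a word of care are the remark that $\a$, hence $\a'$, is abelian — which is what makes left-invariant vector fields coincide with coordinate vector fields, and $\exp$ a homomorphism — and the routine justification of differentiation under the integral sign, which is immediate from the compact support of $\varphi$.
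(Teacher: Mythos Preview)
Your proof is correct and follows essentially the same approach as the paper: identify $H_i$ with $\partial/\partial t^i$ in exponential coordinates on the abelian group $A'$, identify $da'$ with Lebesgue measure $dt'$, and conclude by the fundamental theorem of calculus. The additional coordinate-free argument via right-invariance of the Haar measure is a nice supplement but not needed for the paper's purposes.
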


\begin{proof}
Indeed, for each $2 \leq i \leq r$, $H_i$ is just the coordinate derivative $\frac{\partial}{\partial t^i}$ in the exponential coordinates associated to $\{H_2,\dots,H_r\}$. Since $da' = dt'$ in such exponential coordinates, the lemma follows.
\end{proof}

Next, pick a basis $\{Y_1, \dots, Y_{m-r}\}$ of left-invariant vector fields on $N$.

\begin{lem} \label{lem:intYi}
For any function $\varphi \in C^{\infty}_c(N)$, we have
$$
\int_N Y_j \varphi \, dn = 0
$$
for $1 \leq j \leq m-r$.
\end{lem}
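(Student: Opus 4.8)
The plan is to reduce the statement to the corresponding statement for Lebesgue measure on $\mathbb{R}^{m-r}$ via the exponential coordinates on $N$, and to use the explicit description of the vector fields $Y_j$ in those coordinates recorded in Section~\ref{sect:expcoords}. First I would pass to the exponential coordinates $y = (y^1, \dots, y^{m-r})$ on $N$ associated to the chosen basis $\{Y_1, \dots, Y_{m-r}\}$, so that $dn$ becomes Lebesgue measure $dy$ and $\varphi \in C^\infty_c(N)$ becomes a compactly supported smooth function on $\mathbb{R}^{m-r}$. By \eqref{eq:Ynormalexplicit}, in these coordinates
$$
Y_j = \sum_{\ell=1}^{m-r} p_{j\ell}(y) \frac{\partial}{\partial y^\ell},
$$
so that
$$
\int_N Y_j \varphi \, dn = \sum_{\ell=1}^{m-r} \int_{\mathbb{R}^{m-r}} p_{j\ell}(y) \frac{\partial \varphi}{\partial y^\ell}(y) \, dy.
$$

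Next I would integrate by parts in the $y^\ell$ variable in each summand. Since $\varphi$ has compact support, the boundary terms vanish, and we obtain
$$
\int_{\mathbb{R}^{m-r}} p_{j\ell}(y) \frac{\partial \varphi}{\partial y^\ell}(y) \, dy = - \int_{\mathbb{R}^{m-r}} \frac{\partial p_{j\ell}}{\partial y^\ell}(y) \, \varphi(y) \, dy.
$$
Now I would invoke \eqref{eq:Ynormalexplicit2}, which states precisely that $\frac{\partial}{\partial y^\ell} p_{j\ell}(y) \equiv 0$ for all $1 \leq j, \ell \leq m-r$; hence every summand vanishes and $\int_N Y_j \varphi \, dn = 0$, as claimed. (Equivalently, one may phrase this as saying that each $Y_j$ is divergence-free for Lebesgue measure $dy$ because its $\ell$-th component $p_{j\ell}$ does not depend on $y^\ell$, so the formal adjoint of $Y_j$ on $L^2(dy)$ is $-Y_j$ itself, with no zeroth-order term.)

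There is essentially no obstacle here; the only point requiring care is the justification that $dn$ is Lebesgue measure in exponential coordinates, which is exactly our normalization convention from Section~\ref{sect:int}, and the fact \eqref{eq:Ynormalexplicit2}, which was established in Section~\ref{sect:expcoords} as a consequence of the Baker--Campbell--Hausdorff formula and the grading of $\n$ (the component $p_{j\ell}$ is non-isotropic homogeneous of a degree that, when $d(\alpha_\ell) \geq d(\alpha_j)$, forces it to be built from coordinates $y^i$ with $d(\alpha_i) < d(\alpha_\ell)$, hence independent of $y^\ell$; and when $d(\alpha_\ell) < d(\alpha_j)$ one has $p_{j\ell} \equiv 0$ for degree reasons, while $d(\alpha_\ell) = d(\alpha_j)$ with $j \ne \ell$ gives a constant, and $j = \ell$ gives $p_{jj} \equiv 1$ by \eqref{eq:nor_center} together with homogeneity). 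So the lemma follows immediately once these two inputs are in place.
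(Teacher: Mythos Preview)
Your proof is correct and follows essentially the same approach as the paper's: pass to exponential coordinates, use the expression \eqref{eq:Ynormalexplicit} for $Y_j$, integrate by parts, and invoke \eqref{eq:Ynormalexplicit2}. The paper's proof is simply a terser version of what you wrote (it also notes that one may assume without loss of generality that each $Y_j \in \g_{\alpha_j}$, and that $dn$ is only a scalar multiple of Lebesgue measure for a general basis, neither of which affects the argument).
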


\begin{proof}
One just writes everything out in exponential coordinates. Without loss of generality, assume each $Y_{j} \in \g_{\alpha_j}$ for some $\alpha_j \in \Sigma^+$. Then $Y_{j}$ takes the form \eqref{eq:Ynormalexplicit}, where the coefficients $p_{j \ell}$ satisfies \eqref{eq:Ynormalexplicit2}, and the Haar measure $dn$ is just a scalar multiple of the Lebesgue measure in the exponential coordinates associated to $\{Y_1, \dots, Y_{m-r}\}$. So one can finish the proof of the lemma by using ordinary integration by parts on $\mathbb{R}^{m-r}$.
This gives the assertion of Lemma~\ref{lem:intYi}.
\end{proof}

We note that the validity of the above two lemmas can partly be explained by the fact that the geodesics $s \mapsto \exp(sY_j)$ and $s \mapsto \exp(sH_i)$ go off to infinity; in particular, they cannot be closed curves, cf. Appendix B.13 in Ballmann \cite{MR2243012}.

\section{A decomposition lemma} \label{sect:decomp}

Let $G = I_0(\M)$ be the identity component of the isometry group of $\M$. Fix a point $x_0 \in \M$, and let $K \subset G$ be the stabilizer of $x_0$. Let $G = KAN$ be a corresponding Iwasawa decomposition of $G$, and $\k$, $\a$, $\n$ be the Lie algebras of $K$, $A$, $N$ respectively.

Suppose $H_1 \in \a$ is a unit vector with respect to the Killing metric $g_0$ introduced in Section~\ref{sect:Kill_metric}. Let $\a'$ be the orthogonal complement of $H_1$ in $\a$ under $g_0$. Let $A'$ be the Lie subgroup of $A$ with Lie algebra $\a'$, and let $S'=A'N=NA'$. We will need a decomposition lemma for functions $\Phi$ defined on $S'$. The constants that appear in the lemma has to be independent of the choice of the Iwasawa decomposition $KAN$ of $G$, and independent of the choice $H_1 \in \a$. We will now proceed to formulate this lemma.

Let $da'$, $dn$ and $ds'$ be the normalized Haar measures on $A'$, $N$ and $S'$ as defined in Section~\ref{sect:int}. In particular, $da'$ and $dn$ are the Riemannian volume elements associated to the Killing metric on $A'$ and $N$, and
$$
\int_{S'} \Phi(s') ds' = \int_{A'} \int_N \Phi(a'n) da' dn
$$
for any compactly supported continuous functions $\Phi$ on $S'$. For any such $\Phi$, and any $1 \leq p < \infty$, we also denote the $L^p(S')$ norm of $\Phi$ by
$$
\Vert \Phi\Vert_{L^p(S')} = \left( \int_{S'} |\Phi(s')|^p ds' \right)^{1/p},
$$
and the $L^{\infty}(S')$ norm of $\Phi$ by $\Vert \Phi\Vert_{L^{\infty}(S')} = \sup_{s' \in S'} |\Phi(s')|$. For any smooth functions $\Phi$ with compact support on $S'$, we also write
$$
|\nabla' \Phi(s')| = \left( \sum_{\ell = 2}^m |X_{\ell} \Phi(s')|^2 \right)^{1/2}
$$
where $\{X_2, \dots, X_m\}$ is an orthonormal frame of tangent vectors with respect to the Killing metric on $S'$. Finally, for $1 \leq p \leq \infty$, we write $\Vert \nabla' \Phi\Vert_{L^p(S')}$ for the $L^p(S')$ norm of $|\nabla' \Phi|$, and we write
$$
\Vert  \Phi\Vert_{W^{1,p}(S')}
:= \Vert \Phi\Vert_{L^p(S')} + \Vert \nabla' \Phi\Vert_{L^p(S')}.
$$

We can now formulate our decomposition lemma.

\begin{lem} \label{lem:decompS'unif}
There exists a constant $C > 0$ such that the following holds. Let $\Phi$ be a smooth function with compact support on the  Lie group $S'$. Suppose $p > \textrm{dim}(S') = m-1$, and $\lambda > 0$. Then there exists a decomposition $\Phi = \Phi_1 + \Phi_2$ on $S'$, such that $\Phi_1, \Phi_2 \in C^{\infty}_c(S')$, and
\begin{equation} \label{eq:decomp_conclu}
\begin{cases}
\Vert \Phi_1\Vert_{L^{\infty}(S')} &\leq C \lambda^{1-\frac{m-1}{p}} \Vert  \Phi\Vert_{W^{1,p}(S')}  \\
\Vert \Phi_2\Vert_{L^{\infty}(S')} &\leq C \lambda^{-\frac{m-1}{p}} \Vert \Phi\Vert_{L^p(S')} \\
\Vert \nabla^{'} \Phi_2\Vert_{L^{\infty}(S')} &\leq C \lambda^{-\frac{m-1}{p}} \Vert  \Phi\Vert_{W^{1,p}(S')}.
\end{cases}
\end{equation}
The constant $C$ will be chosen independent of the choice of the Iwasawa decomposition $KAN$ of $G$, and independent of the choice of $H_1 \in \a$.
\end{lem}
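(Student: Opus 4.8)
The plan is to split according to the size of the scale $\lambda$: a large $\lambda$ is essentially trivial, while a small $\lambda$ is handled by a mollification of $\Phi$ adapted to the left-invariant structure of the Lie group $S'$. Fix a threshold $\lambda_0>0$, to be chosen small but depending only on the uniform bounds described below. If $\lambda\geq\lambda_0$, take $\Phi_1=\Phi$ and $\Phi_2=0$ (both in $C^\infty_c(S')$): the last two lines of \eqref{eq:decomp_conclu} are then vacuous, and the first reduces to the Sobolev--Morrey embedding $\Vert\Phi\Vert_{L^\infty(S')}\leq C\Vert\Phi\Vert_{W^{1,p}(S')}$, which holds since $p>m-1=\dim S'$, once one observes that $\lambda^{1-(m-1)/p}\geq\lambda_0^{1-(m-1)/p}$. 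The only subtle point is that this embedding constant must be uniform over the choice of Iwasawa decomposition $KAN$ and of the unit vector $H_1\in\a$: this holds because each $(S',g_0|_{S'})$ is a Lie group with a left-invariant metric whose structure constants, in a $g_0$-orthonormal good basis of $\s'$, are bounded independently of these choices (Section~\ref{sect:Kill_metric} and the remark at the end of Section~\ref{sect:expcoords}), so this family of Riemannian manifolds has uniformly bounded geometry --- indeed it is a single isometry class up to the action of the compact group $K$.

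For $0<\lambda<\lambda_0$ I would fix once and for all a mollifier $\eta\in C^\infty_c(\s')$, with $\eta\geq0$, supported in the $g_0$-ball $B(0,\lambda_0)\subset\s'$, and with $\int_{\s'}\eta\,dX=1$; set $\eta_\lambda(X):=\lambda^{-(m-1)}\eta(X/\lambda)$ and define
\[
  \Phi_2(s'):=\int_{\s'}\Phi(s'\exp X)\,\eta_\lambda(X)\,dX,\qquad \Phi_1:=\Phi-\Phi_2.
\]
Since $|X|<\lambda_0$ on the support of $\eta_\lambda$, everything takes place in a fixed neighbourhood of $e$ on which $\exp\colon\s'\to S'$ is a diffeomorphism with differential uniformly close to the identity (this is how $\lambda_0$ is chosen, using again the uniform bound on the structure constants of $\s'$), so $\Phi_1,\Phi_2\in C^\infty_c(S')$. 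Two elementary and uniform facts will be used throughout: (i) $\Vert\Ad(\exp(-X))\Vert_{\mathrm{op}}=\Vert e^{-\ad X}\Vert_{\mathrm{op}}\leq C$ for $|X|<\lambda_0$; and (ii) on $\exp(B(0,\lambda_0))$ the pushforward $\exp_*(dX)$ is comparable to the left Haar measure $ds'$, so that, by left-invariance of $ds'$, one has $\int_{|X|<\lambda_0}|F(s'\exp X)|^p\,dX\leq C\Vert F\Vert_{L^p(S')}^p$ for every $F$ and every $s'\in S'$.

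The three estimates are then the expected mollification bounds. For $\Phi_2$, Hölder's inequality with exponents $p,p'$, the scaling identity $\Vert\eta_\lambda\Vert_{L^{p'}}=\lambda^{-(m-1)/p}\Vert\eta\Vert_{L^{p'}}$, and fact (ii) give $\Vert\Phi_2\Vert_{L^\infty(S')}\leq C\lambda^{-(m-1)/p}\Vert\Phi\Vert_{L^p(S')}$. For $\nabla'\Phi_2$, differentiating under the integral and rewriting $s'\exp(tX_\ell)\exp X=(s'\exp X)\cdot\bigl(\exp(-X)\exp(tX_\ell)\exp X\bigr)$ yields
\[
  X_\ell\Phi_2(s')=\int_{\s'}\bigl[(\Ad(\exp(-X))X_\ell)\,\Phi\bigr](s'\exp X)\,\eta_\lambda(X)\,dX,
\]
and fact (i) bounds $|\Ad(\exp(-X))X_\ell|_{g_0}$ uniformly, so the same Hölder/change-of-variables argument gives $\Vert\nabla'\Phi_2\Vert_{L^\infty(S')}\leq C\lambda^{-(m-1)/p}\Vert\nabla'\Phi\Vert_{L^p(S')}$. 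Finally, since $\int\eta_\lambda=1$ we have $\Phi_1(s')=\int(\Phi(s')-\Phi(s'\exp X))\,\eta_\lambda(X)\,dX$, and the fundamental theorem of calculus along $\tau\mapsto s'\exp(\tau X)$ gives $\Phi(s'\exp X)-\Phi(s')=\int_0^1(X\Phi)(s'\exp\tau X)\,d\tau$ with $|X\Phi|\leq|X|_{g_0}|\nabla'\Phi|\leq\lambda\,|\nabla'\Phi|$; substituting $Z=\tau X$, applying Hölder for each fixed $\tau$, using (ii), and noting that $\int_0^1\tau^{-(m-1)/p}\,d\tau<\infty$ precisely because $p>m-1$, one obtains $\Vert\Phi_1\Vert_{L^\infty(S')}\leq C\lambda^{1-(m-1)/p}\Vert\nabla'\Phi\Vert_{L^p(S')}$. (Alternatively one may bound $|\Phi_1(s')|$ by the oscillation of $\Phi$ over a ball of radius $\approx\lambda$ about $s'$ and quote Morrey's inequality in a fixed-size chart.) Since $\Vert\nabla'\Phi\Vert_{L^p(S')}\leq\Vert\Phi\Vert_{W^{1,p}(S')}$, these bounds imply all of \eqref{eq:decomp_conclu}.

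The real difficulty is not in any single estimate --- each is a routine mollification bound --- but in tracking the uniformity of every constant ($\lambda_0$, the constants in (i) and (ii), and the Sobolev constant) over the choice of Iwasawa decomposition $KAN$ and of $H_1\in\a$. All of this comes down to the one fact, already recorded in the excerpt, that the iterated commutators of a good basis of $\s$ (hence of $\s'$) have $g_0$-norms bounded independently of these choices, which in turn reflects that all the structures in play are conjugate under the compact group $K$.
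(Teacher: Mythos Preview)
Your proof is correct and somewhat cleaner than the paper's. The large-$\lambda$ case is identical. For small $\lambda$, the paper takes a different route: it introduces a partition of unity on $S'$ by left translates of a fixed cutoff $\chi$, writes $\Phi(s')=\int_{S'}\chi((s_0')^{-1}s')\Phi(s')\,ds_0'$, and for each $s_0'$ pushes the localized piece $\Phi^{(s_0')}$ to $\mathbb{R}^{m-1}$ via exponential coordinates centered at $s_0'$; there it invokes the Euclidean decomposition lemma (mollification on $\mathbb{R}^{m-1}$), pulls back, and integrates over $s_0'$. Your approach replaces this ``localize $+$ Euclidean lemma $+$ reassemble'' scheme by a single intrinsic left-invariant mollification $\Phi_2(s')=\int_{\s'}\Phi(s'\exp X)\,\eta_\lambda(X)\,dX$, which avoids the partition of unity entirely. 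The paper's route has the pedagogical advantage of reducing to a black-box Euclidean statement; yours is shorter and more to the point, and in fact yields the slightly sharper bounds $\Vert\nabla'\Phi_2\Vert_{L^\infty}\lesssim\lambda^{-(m-1)/p}\Vert\nabla'\Phi\Vert_{L^p}$ and $\Vert\Phi_1\Vert_{L^\infty}\lesssim\lambda^{1-(m-1)/p}\Vert\nabla'\Phi\Vert_{L^p}$. Both arguments trace uniformity in the Iwasawa data to the same source (bounded structure constants of a good basis, equivalently conjugacy under $K$).

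One small bookkeeping point: with $\eta$ supported in $B(0,\lambda_0)\subset\s'$, the support of $\eta_\lambda$ is $B(0,\lambda\lambda_0)$, so the assertions ``$|X|<\lambda_0$ on $\operatorname{supp}\eta_\lambda$'' and ``$|X|\le\lambda$'' both require $\lambda_0\le 1$. Either impose this, or (more standard) take $\eta$ supported in the unit ball of $\s'$ and choose $\lambda_0$ only as the threshold for the exponential-map comparison.
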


To prove Lemma~\ref{lem:decompS'unif}, we need the following decomposition lemma on $\mathbb{R}^{d}$, with $d = m-1$:

\begin{lem} \label{lem:decompRd}
Let $\Phi \in C^{\infty}_c(\mathbb{R}^d)$. Suppose $p > d$, and $\lambda > 0$. Then there exists a decomposition $\Phi = \Phi_1 + \Phi_2$ on $\mathbb{R}^d$, such that $\Phi_1, \Phi_2 \in C^{\infty}_c(\mathbb{R}^d)$, and
\begin{equation} \label{eq:decomp_conclu_Rd}
\left\{
\begin{aligned}
&\Vert \Phi_1\Vert_{L^{\infty}(\mathbb{R}^d)} &\leq C \lambda^{1-\frac{d}{p}} \Vert \partial_x \Phi\Vert_{L^p(\mathbb{R}^d)}  \\
&\Vert \Phi_2\Vert_{L^{\infty}(\mathbb{R}^d)} &\leq C \lambda^{-\frac{d}{p}} \Vert \Phi\Vert_{L^p(\mathbb{R}^d)} \\
&\Vert \partial_x \Phi_2\Vert_{L^{\infty}(\mathbb{R}^d)} &\leq C \lambda^{-\frac{d}{p}} \Vert \partial_x \Phi\Vert_{L^p(\mathbb{R}^d)}.
\end{aligned}
\right.
\end{equation}
Furthermore, if $\Phi$ is supported on a ball of radius $R$, then one can arrange $\Phi_1$ and $\Phi_2$, so that they are both supported on a ball of radius $R + \lambda$.
\end{lem}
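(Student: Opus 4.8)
The plan is to obtain the splitting by mollification at scale $\lambda$. Fix once and for all a function $\rho \in C^{\infty}_c(\mathbb{R}^d)$ supported in the unit ball $B(0,1)$ with $\int_{\mathbb{R}^d} \rho = 1$, and set $\rho_\lambda(x) := \lambda^{-d} \rho(x/\lambda)$, which is smooth, supported in $B(0,\lambda)$, and has $\int_{\mathbb{R}^d} \rho_\lambda = 1$. Then define
\[
  \Phi_2 := \Phi * \rho_\lambda, \qquad \Phi_1 := \Phi - \Phi_2 .
\]
Both $\Phi_1$ and $\Phi_2$ are smooth. If $\Phi$ is supported in $B(0,R)$, then $\Phi_2$, being a convolution, is supported in $B(0,R+\lambda)$, and hence so is $\Phi_1 = \Phi - \Phi_2$; this gives the compact support of both and settles the last sentence of the statement. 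It then remains to prove the three estimates in \eqref{eq:decomp_conclu_Rd}.

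For the two bounds on $\Phi_2$ I would use Young's (equivalently, H\"older's) inequality. Writing $\Phi_2(x) = \int_{\mathbb{R}^d} \Phi(x-y)\,\rho_\lambda(y)\,dy$, one gets $\Vert \Phi_2 \Vert_{L^\infty} \le \Vert \Phi \Vert_{L^p} \Vert \rho_\lambda \Vert_{L^{p'}}$, where $p'$ is the conjugate exponent of $p$. A direct scaling computation gives $\Vert \rho_\lambda \Vert_{L^{p'}} = \lambda^{-d + d/p'} \Vert \rho \Vert_{L^{p'}} = \lambda^{-d/p} \Vert \rho \Vert_{L^{p'}}$, since $-d + d/p' = -d/p$. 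Hence $\Vert \Phi_2 \Vert_{L^\infty} \le C \lambda^{-d/p} \Vert \Phi \Vert_{L^p}$, which is the second line of \eqref{eq:decomp_conclu_Rd}. Differentiating under the convolution, $\partial_x \Phi_2 = (\partial_x \Phi) * \rho_\lambda$, so repeating the same argument with $\Phi$ replaced by $\partial_x \Phi$ yields $\Vert \partial_x \Phi_2 \Vert_{L^\infty} \le C \lambda^{-d/p} \Vert \partial_x \Phi \Vert_{L^p}$, which is the third line.

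For the bound on $\Phi_1$ I would use that $\int_{\mathbb{R}^d} \rho_\lambda = 1$ to write $\Phi_1(x) = \int_{\mathbb{R}^d} \bigl( \Phi(x) - \Phi(x-y) \bigr)\, \rho_\lambda(y)\, dy$, and then control the oscillation of $\Phi$ over balls of radius $\lambda$ by Morrey's inequality: because $p > d$, one has $|\Phi(x) - \Phi(z)| \le C |x-z|^{1-d/p} \Vert \partial_x \Phi \Vert_{L^p(\mathbb{R}^d)}$ for all $x,z$. Since $\rho_\lambda$ is supported in $B(0,\lambda)$, this gives $|\Phi_1(x)| \le C \lambda^{1-d/p} \Vert \partial_x \Phi \Vert_{L^p}$, the first line of \eqref{eq:decomp_conclu_Rd}.

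The computation is essentially routine bookkeeping of powers of $\lambda$; the only genuine ingredient is the Morrey embedding $W^{1,p} \hookrightarrow C^{0,1-d/p}$, and this is exactly where the strict inequality $p > d$ is used (for $p = d$ the oscillation estimate, hence the first bound, fails). I expect no real obstacle here. One extra point worth noting is that when this lemma is later fed into the proof of Lemma~\ref{lem:decompS'unif} on the Lie group $S'$, the local coordinate patches one works in can be taken comparable to Euclidean balls at the relevant scale, so that the above Euclidean decomposition transfers; keeping track of the $p$-dependence of $C$ (through $\Vert \rho \Vert_{L^{p'}}$ and the Morrey constant, both bounded for $p$ in a fixed range $(d,\infty)$) is then the only mild additional care required, though it is not needed for the statement of Lemma~\ref{lem:decompRd} as given.
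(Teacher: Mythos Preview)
Your proposal is correct and follows essentially the same approach as the paper: define $\Phi_2 = \Phi * \eta_\lambda$ with a unit-mass mollifier supported in the unit ball, set $\Phi_1 = \Phi - \Phi_2$, and invoke Morrey's embedding (using $p>d$) for the $\Phi_1$ bound while the $\Phi_2$ bounds follow by H\"older/Young. The paper's proof is more terse and cites Stein for the verification, but your explicit derivation of the three estimates and the support claim matches it exactly.
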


The proof is implicit already in the proof of Theorem 1.5 in \cite{MR2078071}; see also \cite{MR2122730}. We briefly outline it for the convenience of the reader.

\begin{proof}
Since $\partial_x \Phi \in L^p(\mathbb{R}^d)$  with $p > d$, Morrey's embedding implies that $\Phi \in C^{\gamma}$ where $\gamma = 1-\frac{d}{p}$. It is then well-known that $\Phi$ can be decomposed into $\Phi = \Phi_1 + \Phi_2$, as in the statement of the lemma: indeed one just needs to take
$$
\Phi_2 := \Phi * \eta_{\lambda}, \quad \Phi_1 := \Phi - \Phi_2
$$
where $\eta$ is a fixed smooth function with compact support on the unit ball on $\mathbb{R}^d$ with $\int_{\mathbb{R}^d} \eta = 1$, and $\eta_{\lambda}(x) := \lambda^{-d} \eta (\lambda^{-1} x)$. Morally speaking, $\Phi_2$ is the low-frequency component of $\Phi$, and $\Phi_1$ the high-frequency component. It is then straightforward to establish the estimates in \eqref{eq:decomp_conclu_Rd}, and the estimates on the sizes of the supports of $\Phi_1$ and $\Phi_2$; see for instance, Corollary 1 in \cite[Chapter VI, Section 5.3]{MR1232192}, for a variant of this argument.
\end{proof}

We are now ready for the proof of Lemma~\ref{lem:decompS'unif}.

\begin{proof}[Proof of Lemma~\ref{lem:decompS'unif}]
First we introduce, around any point $s'_0 \in S'$, exponential coordinates $x':=(t',y)$ as follows. Recall that we have picked a vector $H_1 \in \a$, that has unit length with respect to the Killing metric $g_0$. We now complete it to a good basis of $\s = \a \oplus \n$, as defined in Section~\ref{sect:Kill_metric}; in other words, we pick $H_2, \dots, H_r \in \a$, and $Y_1, \dots, Y_{m-r} \in \n$, such that each $Y_j$ belongs to $\g_{\alpha_j}$ for some $\alpha_j \in \Sigma^+$, and such that $\{H_1, \dots, H_r, Y_1, \dots, Y_{m-r}\}$ form an orthonormal basis of $\s$ with respect to $g_0$. This is possible by the orthogonality relations \eqref{item:ortho1} and \eqref{item:ortho2} in Section~\ref{sect:Kill_metric}. Later on we will want our constants to be chosen independent of the choice of the Iwasawa decomposition $KAN$ of $G$, and independent of the choice of good basis $\{H_1, \dots, H_r, Y_1, \dots, Y_{m-r}\}$ of $\s$. For brevity, such constants will be called \emph{absolute constants}.

Now suppose we fix a point $s'_0 \in S'$. We identify $x':=(t',y) \in \mathbb{R}^{r-1} \times \mathbb{R}^{m-r}$ with the point
$$
s'_0 \exp \left( \sum_{j=1}^{m-r} y^j Y_j \right) \exp \left( \sum_{i=2}^r t^i H_i \right).
$$
Then in these exponential coordinates on $S'$, $H_i$ is just
\begin{equation} \label{eq:Hnormalexplicit}
H_i = \frac{\partial}{\partial t^i}
\end{equation}
for $2 \leq i \leq r$. We claim that $Y_j$ is given, in these coordinates, by
\begin{equation} \label{eq:Ynormalexplicit3}
Y_j =  \left( \prod_{i=2}^r e^{t^i \alpha_j(H_i)} \right) \sum_{\ell=1}^{m-r} p_{j \ell}(y) \frac{\partial}{\partial y^{\ell}}.
\end{equation}
where $p_{j \ell}(y)$ are defined as in \eqref{eq:Ynormalexplicit}.
Indeed,
$$
Y_j = \left. \frac{d}{ds} \right|_{s=0} s'_0 \exp \left( \sum_{\ell=1}^{m-r} y^{\ell} Y_{\ell} \right) \exp \left( \sum_{i=2}^r t^i H_i \right) \exp(s Y_j),
$$
whereas we will rewrite the last two factors as
$$
\exp \left( \sum_{i=2}^r t^i H_i \right) \exp(s Y_j) \exp \left( -\sum_{i=2}^r t^i H_i \right) \exp \left( \sum_{i=2}^r t^i H_i \right).
$$
Now
\begin{align*}
&\exp \left( \sum_{i=2}^r t^i H_i \right) \exp(s Y_j) \exp \left( -\sum_{i=2}^r t^i H_i \right)  \\
=&  \exp \left[ \left( \Ad \exp \left( \sum_{i=2}^r t^i H_i \right) \right) s Y_j \right]\\\
=& \exp \left[ \exp \left( \ad \left( \sum_{i=2}^r t^i H_i \right) \right) s Y_j \right] \\
=& \exp \left[ s \prod_{i=2}^r e^{t^i \alpha_j(H_i) } Y_j  \right]
\end{align*}
Putting these back, we have
\begin{align*}
Y_j &= \left. \frac{d}{ds} \right|_{s=0} s'_0 \exp \left( \sum_{\ell=1}^{m-r} y^{\ell} Y_{\ell} \right) \exp \left[ s \prod_{i=2}^r e^{t^i \alpha_j(H_i) } Y_j  \right]  \exp \left( \sum_{i=2}^r t^i H_i \right) \\
&= \prod_{i=2}^r e^{t^i \alpha_j(H_i) } \left. \frac{d}{ds} \right|_{s=0} s'_0 \exp \left( \sum_{\ell=1}^{m-r} y^{\ell} Y_{\ell} \right) \exp (s Y_j ) \exp \left( \sum_{i=2}^r t^i H_i \right)
\end{align*}
Using the Baker-Campbell-Hausdorff formula as we used to prove \eqref{eq:Ynormalexplicit}, we obtain \eqref{eq:Ynormalexplicit3}.

In view of \eqref{eq:Hnormalexplicit}, \eqref{eq:Ynormalexplicit3} and \eqref{eq:nor_center}, we see that if $s'_0$ is any point on $S'$ and $(t',y)$ is the exponential coordinates we introduced above, then at $(t',y) = (0,0)$, we have
\begin{equation} \label{eq:normalcenter}
\left. H_i \right|_{s'_0} = \left. \frac{\partial}{\partial t^i} \right|_{(0,0)}, \quad \left. Y_j \right|_{s'_0} = \left. \frac{\partial}{\partial y^j} \right|_{(0,0)}.
\end{equation}
Now write $B(s'_0,r)$ for the metric ball on $S'$, with respect to the Killing metric on $S'$, that is centered at $s'_0$ and of radius $r$. Then in view of the above, there exists $\varepsilon_0 > 0$ such that if $s' \in B(s'_0,\varepsilon_0)$ and $x'=(t',y)$ is the exponential coordinates of $s'$ centered at $s'_0$, then for any function $\Phi$ on $S'$, we have
$$
|\nabla^{'} \Phi(s')| \simeq
\sum_{i=2}^r \left| \frac{\partial}{\partial t^i} \Phi(t',y) \right| + \sum_{j=1}^{m-r} \left| \frac{\partial}{\partial y^j} \Phi(t',y) \right|,
$$
which we abbreviate as
\begin{equation} \label{eq:grad_comp_smallball}
|\nabla^{'} \Phi(s')| \simeq
\left| \frac{\partial \Phi}{\partial x'}(x') \right|.
\end{equation}
Indeed $\varepsilon_0$ and the implicit constants here can be chosen independent of $s'_0$, and are hence uniform over $S'$; they can also be chosen to be independent of the choice of $\{H_1,\dots,H_r\}$, since each $\alpha_j$ remains uniformly bounded on the unit sphere in $\a$ (where we put the Killing metric on $\a$). Furthermore, $\varepsilon_0$ and the implicit constants above can be chosen to be independent of the choice of $\{Y_1, \dots, Y_{m-r}\}$, since the coefficients of the polynomials $p_{j\ell}(y)$ in \eqref{eq:Ynormalexplicit} are uniformly bounded, independent of the choice of $\{Y_1, \dots, Y_{m-r}\}$; see discussion near the end of Section~\ref{sect:expcoords}. We further claim that $\varepsilon_0$ and the implicit constants above can be chosen to be independent of the choice of the Iwasawa decomposition $KAN$ of $G$: indeed, if $G = KA_0N_0$ is another Iwasawa decomposition of $G$, then there exists $k \in K$ such that $A = kA_0k^{-1}$, and $N=kN_0k^{-1}$; see for instance Remark after Theorem 4.6 of \cite{MR1476489}. Since for $k \in K$, the maps $\Ad(k) \colon A \to A_0$ and $\Ad(k) \colon N \to N_0$ are isometries with respect to the Killing metric, our claim follows. It follows that $\varepsilon_0$ and the implicit constants in \eqref{eq:grad_comp_smallball} are absolute constants.

Since the measure $ds'$ is a normalized Haar measure, we also have the following uniform comparison of $ds'$ with the Lebesgue measure $dx'$ on $\mathbb{R}^{m-1}$: indeed
\begin{equation} \label{eq:meas_comp_ball}
\int_{B(s'_0,\varepsilon_0)} |\Phi(s')| ds' \simeq \int_{B(s'_0,\varepsilon_0)} |\Phi(x')| dx'
\end{equation}
holds uniformly for any function $\Phi$ on $S'$ and any $s'_0 \in S'$; here the implicit constants are again absolute constants.

Now to prove Lemma~\ref{lem:decompS'unif}, suppose $\Phi \in C^{\infty}_c(S')$, $p > \text{dim}(S') = m-1$, and $\lambda > 0$ are given. We consider two cases.

\vspace{0.1 in}

\noindent{\textbf{Case 1}: $\lambda \geq C_1^{-1} \varepsilon_0$, where $C_1$ is a large absolute constant to be chosen.}

\vspace{0.1 in}

Then take $\Phi_1 = \Phi$, $\Phi_2 = 0$. We claim that
$$
\Vert \Phi\Vert_{L^{\infty}(S')} \leq C \Vert \Phi\Vert_{W^{1,p}(S')},
$$
where $C$ is an absolute constant.
In fact, given $s'_0 \in S'$, the exponential coordinates centered at $s'_0$ allows one to identify $B(s'_0,\varepsilon_0)$ with a bounded open set in $\mathbb{R}^{m-1}$. Then by the Sobolev inequality on $\mathbb{R}^{m-1}$, we have
$$
\Vert \Phi\Vert_{L^{\infty}(B(s'_0,\varepsilon_0))} \leq C \left\Vert  \, |\Phi| + \left| \frac{\partial \Phi}{\partial x'} \right| \, \right\Vert_{L^p(B(s'_0,\varepsilon_0),dx')}.
$$
(Here we use $p > m-1$.) But by \eqref{eq:grad_comp_smallball} and \eqref{eq:meas_comp_ball}, the right hand side is bounded by a constant multiple of $\Vert \Phi\Vert_{W^{1,p}(S')}$. All constants here are absolute constants.
This implies our desired claim, and completes our proof in Case 1.

\vspace{0.1 in}

\noindent{\textbf{Case 2}: $\lambda < C_1^{-1} \varepsilon_0$.}

\vspace{0.1 in}

We choose a suitable smooth function $\chi$ on $S'$, supported in a ball of sufficiently small radius centered at the identity element of $S'$, such that
$$
1 = \int_{s_0' \in S'} \chi((s'_0)^{-1} s') ds'_0  \quad \text{for all $s' \in S'$}.
$$
(Here $ds_0'$ is the normalized Haar measure on $S'$, and hence this integral is constant on $S'$.) Then given $\Phi \in C^{\infty}_c(S')$, we write
$$
\Phi(s') = \int_{S'} \chi((s'_0)^{-1} s') \Phi(s') ds'_0.
$$
Now since $\lambda \geq C_1^{-1} \varepsilon_0$, then we decompose, for each $s'_0 \in S'$, the function $$\Phi^{(s'_0)}(s'):= \chi((s'_0)^{-1} s') \Phi(s')$$ by first identifying it with a function on $\mathbb{R}^{m-1}$, and then applying the decomposition lemma on $\mathbb{R}^{m-1}$. Using (\ref{eq:grad_comp_smallball}) and (\ref{eq:meas_comp_ball}), one then obtains, for each $s'_0 \in S'$, a decomposition
$$
\Phi^{(s'_0)} = \Phi^{(s'_0)}_1 + \Phi^{(s'_0)}_2,
$$
where $\Phi^{(s'_0)}_1$ and $\Phi^{(s'_0)}_2$ satisfies
\begin{align*}
\Phi^{(s'_0)}_1(s') &\leq C \lambda^{1-\frac{m-1}{p}}  \left\Vert \nabla' \Phi^{(s'_0)} \right\Vert_{L^p(S')} \chi_{B(s_0',\varepsilon_0)}(s') \\
\Phi^{(s'_0)}_2(s') &\leq C \lambda^{-\frac{m-1}{p}}  \Vert \Phi^{(s'_0)}\Vert_{L^p(S')} \chi_{B(s_0',\varepsilon_0)}(s') \\
|\nabla' \Phi^{(s'_0)}_2(s')| & \leq C  \lambda^{-\frac{m-1}{p}} \left\Vert \nabla' \Phi^{(s'_0)} \right\Vert_{L^p(S')} \chi_{B(s_0',\varepsilon_0)}(s').
\end{align*}
(The characteristic functions on the right hand side indicate that $\Phi^{(s'_0)}_1$ and $\Phi^{(s'_0)}_2$ are both supported in a ball centered at $s'_0$ and of radius $\varepsilon_0$.)
We now integrate both sides of each inequality above over $s'_0 \in S'$ with respect to $ds'_0$. Then defining
$$
\Phi_1 := \int_{s'_0 \in S'} \Phi^{(s'_0)}_1 ds'_0, \quad \Phi_2 := \int_{s'_0 \in S'} \Phi^{(s'_0)}_2 ds'_0,
$$
we get \eqref{eq:decomp_conclu} as desired, with constants that are absolute constants.
\end{proof}

\section{Proof of Theorem~\ref{thm:main}} \label{sect:proof}

We now prove Theorem~\ref{thm:main}. As explained at the end of Section~\ref{sect:Kill_metric}, we may take the Riemannian metric $g$ on $\M$ to be the Killing metric $g_0$. Henceforth the pointwise inner product $\langle \cdot, \cdot \rangle$, the pointwise norm $| \cdot |$, the volume form $dV$ and the Levi-Civita connection $\nabla$ will all be taken with respect to the Killing metric $g_0$.

Recall that we fixed a point $x_0 \in \M$. Let $T_{x_0} \M$ be the tangent space to $\M$ at $x_0$. Let $\mathbb{S}^{m-1} \subset T_{x_0}\M$ be the unit sphere in $T_{x_0} \M$ under the inner product $\langle \cdot, \cdot \rangle$ induced by $g_0$. To prove Theorem~\ref{thm:main}, we need to estimate the integral
$$
\int_{\M} \langle f, \phi \rangle dV.
$$
We will do so by rewriting the integral as an integral over $G \times \mathbb{S}^{m-1}$, where $G := I_0(\M)$ is the identity component of the isometry group of $\M$. Indeed, let $L_g \colon \M \to \M$ be the left translation by $g \in G$, i.e. $L_g x = gx$ for $g \in G$ and $x \in \M$. Then if $v$ is a tangent vector to $\M$ at some point $x \in \M$, $dL_g(v)$ is a tangent vector to $\M$ at $gx$, and we abbreviate this by
$$
d_g v := dL_g(v).
$$
Now fix a unit vector $v_0 \in T_{x_0} \M$. Then $d_g v_0$ will be a unit tangent vector to $\M$ at $gx_0$. We claim that there exists a non-zero constant $C_0$, such that
\begin{equation} \label{eq:Mav=KOav}
C_0 \int_{\M} \langle f, \phi \rangle dV = \int_G \int_{\mathbb{S}^{m-1}} \langle f(gx_0), d_g v_0 \rangle \langle \phi(gx_0), d_g v_0 \rangle \, d\sigma(v_0) \, dg.
\end{equation}
Here $dg$ is the normalized Haar measure on $G$ as in Section~\ref{sect:int}, and $d\sigma(v_0)$ is the standard surface measure on $\mathbb{S}^{m-1}$, normalized such that $\int_{\mathbb{S}^{m-1}} d\sigma(v_0) = 1$.

To verify \eqref{eq:Mav=KOav}, fix $g \in G$. The inner integrand on the right hand side of \eqref{eq:Mav=KOav} is just
$$
\int_{\mathbb{S}^{m-1}} \langle d_{g^{-1}} f(gx_0), v_0 \rangle \langle d_{g^{-1}} \phi(gx_0), v_0 \rangle \, d\sigma(v_0).
$$
But $d_{g^{-1}} f(gx_0)$ and $d_{g^{-1}} \phi(gx_0)$ are just two tangent vectors to $\M$ at $x_0$. One can show that for any tangent vectors $u, u' \in T_{x_0}\M$, we have
$$
\int_{\mathbb{S}^{m-1}} \langle u, v_0 \rangle \langle u',v_0 \rangle \, d\sigma(v_0) = C_0 \langle u, u' \rangle
$$
where $C_0$ is a non-zero constant; this just follows by a direct calculation on $\mathbb{R}^m$. Thus the inner integral on the right hand side of \eqref{eq:Mav=KOav} is just
$$
C_0 \langle d_{g^{-1}} f(gx_0), d_{g^{-1}} \phi(gx_0) \rangle = C_0 \langle f(gx_0), \phi(gx_0) \rangle.
$$
By \eqref{eq:dgK=dV}, it then follows that the right hand side of \eqref{eq:Mav=KOav} is equal to
$$
\int_{G} C_0 \langle f(gx_0), \phi(gx_0) \rangle dg = C_0 \int_{\M} \langle f, \phi \rangle dV.
$$
So \eqref{eq:Mav=KOav} follows.

Now to prove Theorem~\ref{thm:main}, we just need to estimate the right hand side of \eqref{eq:Mav=KOav}. Indeed, we will reverse the order of integration, and estimate the integral over $G$ for each fixed $v_0 \in \mathbb{S}^{m-1}$. More precisely, we will establish the following proposition:

\begin{prop} \label{prop:G_int}
Suppose $f$ is a smooth vector field on $\M$ with $\div f = 0$, and $\phi$ is a smooth vector field on $\M$ with compact support. Then for each vector $v_0 \in \mathbb{S}^{m-1} \subset T_{x_0}\M$, we have
\begin{equation} \label{eq:intG_est}
\left| \int_G \langle f(gx_0), d_g v_0 \rangle \langle \phi(gx_0), d_g v_0 \rangle dg \right| \leq C \Vert f\Vert_{L^1(dV)} \Vert \nabla \phi\Vert_{L^m(dV)},
\end{equation}
where the constant $C$ is independent of $v_0 \in \mathbb{S}^{m-1}$.
\end{prop}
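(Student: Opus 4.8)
The plan is to transfer the integral over the isometry group $G$ onto the symmetric space itself by means of an Iwasawa decomposition, and then to run a one‑dimensional slicing argument in the spirit of \cite{MR2078071}, using the decomposition Lemma~\ref{lem:decompS'unif} on the codimension‑one subgroup $S'$. First I would reduce \eqref{eq:intG_est} to an estimate on $\M \simeq S$. Applying the integration formula \eqref{eq:d(ank)} together with $kx_0 = x_0$ for $k\in K$, the left‑hand side of \eqref{eq:intG_est} becomes $\int_K \bigl( \int_{\M} \langle f,\mathcal{W}_k\rangle \langle \phi,\mathcal{W}_k\rangle \, dV \bigr)\, dk$, where $S = AN$, $\M$ is identified with $S$ as in Section~\ref{sect:int}, and $\mathcal{W}_k$ is the left‑$S$‑invariant vector field with value $d_k v_0$ at $x_0$. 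Since $K$ acts by isometries each $\mathcal{W}_k$ is of unit length, and $\int_K dk = 1$, so it suffices to prove, with a constant independent of the Iwasawa decomposition and of the chosen unit vector, that $\bigl| \int_{\M} \langle f,\mathcal{W}\rangle \langle \phi,\mathcal{W}\rangle \, dV \bigr| \le C \Vert f\Vert_{L^1(dV)} \Vert \nabla \phi\Vert_{L^m(dV)}$ for $\mathcal{W}$ the left‑$S$‑invariant extension of a unit vector in $T_{x_0}\M$; choosing the Iwasawa decomposition adapted to that vector (and arguing by continuity on the negligible set of exceptional directions), one may take $\mathcal{W} = \mathcal{H}_1$, the left‑$S$‑invariant extension of a unit vector $H_1 \in \a$. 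Complete $H_1$ to a good basis $H_1,\dots,H_r,Y_1,\dots,Y_{m-r}$ of $\s$, relabelled $X_1,\dots,X_m$ as in \eqref{eq:X_label}.

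Next comes the slicing. Let $\a'$ be the $g_0$‑orthogonal complement of $H_1$ in $\a$, set $A' = \exp\a'$ and $S' = A'N$; since $S'$ is normal in $S$ with $S = S' \cdot \exp(\mathbb{R}H_1)$, the space $\M\simeq S$ is foliated by the hypersurfaces $\Sigma_t := S'\exp(tH_1)x_0$ ($t\in\mathbb{R}$), each a left‑translate of $\Sigma_0 = S'x_0 \simeq S'$; the unit field $\mathcal{H}_1 = \partial_t$ is orthogonal to all $\Sigma_t$ by the orthogonality relations of Section~\ref{sect:Kill_metric}, and $dV$ disintegrates as $\int_{\M}F\,dV = \int_{\mathbb{R}}\bigl(\int_{\Sigma_t}F\,dV|_{\Sigma_t}\bigr)\,dt$ with $dV|_{\Sigma_t}$ a constant multiple of Haar measure on $S'$. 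On each slice I would apply Lemma~\ref{lem:decompS'unif} with $p = m > m-1 = \dim S'$ to $\psi^t := \langle\phi,\mathcal{H}_1\rangle|_{\Sigma_t}$, at a scale $\lambda(t)>0$ chosen so that $\lambda(t)^{1/m}\Vert\psi^t\Vert_{W^{1,m}(\Sigma_t)} = \Vert\nabla\phi\Vert_{L^m(\M)}$; this gives $\psi^t = \Phi^t_1 + \Phi^t_2$ with $\Vert\Phi^t_1\Vert_{\infty}\lesssim\Vert\nabla\phi\Vert_{L^m(\M)}$, $\Vert\Phi^t_2\Vert_{\infty}\lesssim\lambda(t)^{-(m-1)/m}\Vert\psi^t\Vert_{L^m(\Sigma_t)}$ and $\Vert\nabla'\Phi^t_2\Vert_{\infty}\lesssim\lambda(t)^{-(m-1)/m}\Vert\psi^t\Vert_{W^{1,m}(\Sigma_t)}$.

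The two contributions are then handled separately. The $\Phi_1$‑term is trivial: $\bigl|\int_{\M}\langle f,\mathcal{H}_1\rangle\Phi_1\,dV\bigr| \le \int_{\mathbb{R}}\Vert f(\cdot,t)\Vert_{L^1(\Sigma_t)}\Vert\Phi^t_1\Vert_{\infty}\,dt\lesssim\Vert\nabla\phi\Vert_{L^m(\M)}\int_{\mathbb{R}}\Vert f(\cdot,t)\Vert_{L^1(\Sigma_t)}\,dt = \Vert\nabla\phi\Vert_{L^m(\M)}\Vert f\Vert_{L^1(\M)}$. For the $\Phi_2$‑term, set $u(s',t):=\int_{-\infty}^t\Phi_2(s',\tau)\,d\tau$, so that $\mathcal{H}_1 u = \Phi_2$; both $\Vert u\Vert_{\infty}$ and $\sup_{\ell\ge2}\Vert X_\ell u\Vert_{\infty}$ are $\lesssim\int_{\mathbb{R}}\lambda(\tau)^{-(m-1)/m}\Vert\psi^\tau\Vert_{W^{1,m}(\Sigma_\tau)}\,d\tau = \Vert\nabla\phi\Vert_{L^m(\M)}^{-(m-1)}\int_{\mathbb{R}}\Vert\psi^\tau\Vert_{W^{1,m}(\Sigma_\tau)}^{m}\,d\tau\lesssim\Vert\nabla\phi\Vert_{L^m(\M)}$, the last step using $\int_{\mathbb{R}}\Vert\psi^\tau\Vert_{W^{1,m}(\Sigma_\tau)}^{m}\,d\tau = \Vert\psi\Vert_{L^m(\M)}^m + \int_{\M}|\nabla'\psi|^m\,dV\lesssim\Vert\nabla\phi\Vert_{L^m(\M)}^m$, which follows from $|\nabla'\langle\phi,\mathcal{H}_1\rangle|\lesssim|\nabla\phi| + |\phi|$ (legitimate by \eqref{eq:parallel} and the boundedness of the connection coefficients of a good frame) and the Hardy inequality $\Vert\phi\Vert_{L^m(\M)}\lesssim\Vert\nabla\phi\Vert_{L^m(\M)}$ of Lemma~\ref{lem:Hardy}. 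Writing $f = \sum_\ell f^\ell X_\ell$ and using that $\{X_\ell\}$ is $g_0$‑orthonormal, $\langle f,\mathcal{H}_1\rangle\Phi_2 = f^1(\mathcal{H}_1 u) = \langle f,\nabla u\rangle - \sum_{\ell\ge2}f^\ell(X_\ell u)$; here $\int_{\M}\langle f,\nabla u\rangle\,dV = \int_{\M}\operatorname{div}(uf)\,dV = 0$ by the divergence‑free hypothesis, justified by exhausting $\M$ by metric balls (with $u$ bounded and $\nabla u$ bounded, so that $\langle f,\nabla u\rangle \in L^1$), the vanishing of boundary contributions being governed by Lemmas~\ref{lem:intHi} and \ref{lem:intYi}; and $\bigl|\sum_{\ell\ge2}\int_{\M}f^\ell(X_\ell u)\,dV\bigr| \le (m-1)\sup_{\ell}\Vert X_\ell u\Vert_{\infty}\Vert f\Vert_{L^1(\M)}\lesssim\Vert\nabla\phi\Vert_{L^m(\M)}\Vert f\Vert_{L^1(\M)}$. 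Adding the two contributions yields \eqref{eq:intG_est} with $C$ independent of $v_0$.

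The main obstacle is twofold. The reduction above to a pairing direction inside $\a$ must be handled with care, since the $K$‑average produces, a priori, directions $d_k v_0$ lying in no fixed maximal abelian subspace, and one must either exploit the conjugacy of all Iwasawa decompositions or treat a general left‑$S$‑invariant $\mathcal{W}$ directly. But the real heart of the matter is the integration by parts $\int_{\M}\langle f,\nabla u\rangle\,dV = 0$ for a merely $L^1$ (not compactly supported) divergence‑free field $f$ and a bounded, but not compactly supported, primitive $u$: this is exactly where the hypothesis of non‑compact type is essential, for the one‑parameter subgroups $\exp(tH_i)$ and $\exp(tY_j)$ sweep out geodesics escaping to infinity and hence carry no boundary terms there (cf. Lemmas~\ref{lem:intHi}, \ref{lem:intYi}), whereas closed geodesics would spoil this. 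Finally, propagating the slice‑dependent scale $\lambda(t)$ consistently through both pieces, and keeping every constant — in the decomposition lemma, in the slice‑wise Sobolev and Hardy inequalities, and in the comparison of $\nabla'$ with $\nabla$ — uniform in all the choices, is what makes the bound independent of $v_0$.
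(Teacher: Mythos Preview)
Your overall strategy---slice by codimension-one leaves orthogonal to a chosen $H_1\in\a$, apply the decomposition Lemma~\ref{lem:decompS'unif} on each leaf, and use $\operatorname{div}f=0$ via an integration by parts---matches the paper's in spirit. But the scheme you propose for the $\Phi_2$-piece has a genuine gap that is specific to the non-Euclidean geometry.

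You define $u(s',t)=\int_{-\infty}^{t}\Phi_2(s',\tau)\,d\tau$ and assert that $\sup_{\ell\ge 2}\Vert X_\ell u\Vert_\infty$ is finite, controlled by $\int_{\mathbb{R}}\lambda(\tau)^{-(m-1)/m}\Vert\psi^\tau\Vert_{W^{1,m}}\,d\tau$. This is false for the $N$-directions. In your coordinates $(s',t)\mapsto s'e^{tH_1}x_0$, the left-$S$-invariant field $Y_j$ is \emph{not} the slice-tangential (left-$S'$-invariant) field: since $e^{tH_1}e^{\sigma Y_j}=e^{\sigma e^{t\alpha_j(H_1)}Y_j}e^{tH_1}$, one has $Y_j=e^{t\alpha_j(H_1)}\,\partial_{s',Y_j}$, and hence
\[
(Y_j u)(s',t)=e^{t\alpha_j(H_1)}\int_{-\infty}^{t}(\partial_{s',Y_j}\Phi_2^\tau)(s')\,d\tau.
\]
Since the roots span $\a^*$, some $\alpha_j(H_1)\ne 0$, and the prefactor $e^{t\alpha_j(H_1)}$ blows up as $t\to\pm\infty$ even though the integral is bounded (the support of $\psi^\tau$ in $\tau$ is compact). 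So $\nabla u\notin L^\infty$, which destroys both the justification of $\int_{\M}\langle f,\nabla u\rangle\,dV=0$ and the bound on $\sum_{\ell\ge 2}\int f^\ell(X_\ell u)\,dV$. (A related oversight: in these same coordinates $dV=e^{-2\rho(tH_1)}\,ds'\,dt$, not $ds'\,dt$, so your disintegration acquires a $t$-dependent weight.)

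The paper avoids this by \emph{not} antidifferentiating $\Phi_2$ across all of $\M$. Instead it proves a single-slice estimate (Proposition~\ref{prop:basicest}): on $S'$ one writes, with a fixed cutoff $\chi\in C^\infty_c(A_1)$,
\[
f^1(na')\,\Phi_2(na')=-\int_0^\infty\frac{d}{dt}\bigl[f^1(na'e^{tH_1})\chi(e^{tH_1})\bigr]\Phi_2(na')\,dt,
\]
uses \eqref{eq:divf_explicit2} to replace $H_1f^1$ by $-\sum_{\ell\ge2}X_\ell f^\ell+\cdots$, and then integrates by parts in the $S'$-directions. The same exponential factor $e^{t\alpha_j(H_1)}$ appears (see \eqref{eq:IIest2}), but it is multiplied by $\chi(e^{tH_1})$ and is therefore bounded. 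The slice estimate is then transported by $a_1\in A_1$ and combined by H\"older, and finally transported by $k\in K$.

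On your reduction step: by using \eqref{eq:d(ank)} you end up needing the estimate for a \emph{general} left-$S$-invariant unit field $\mathcal{W}_k$, and ``re-choosing the Iwasawa'' does not repair this, since changing $S$ changes what ``left-$S$-invariant extension of $d_kv_0$'' means. The paper sidesteps the issue: it adapts the Iwasawa to $v_0$ \emph{first} so that $v_0=H_1\in\a$, and then uses the decomposition $G=KNA$ (equivalently \eqref{eq:d(kna)}), putting the $K$-factor on the left. Then $d_{kna}v_0=d_k(X_1|_{nax_0})$, so the $k$ is absorbed by replacing $f,\phi$ with their $k$-translates $f_k,\phi_k$; the pairing direction stays fixed at $X_1=H_1$ throughout.
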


If this is established, then integrating \eqref{eq:intG_est} over $v_0 \in \mathbb{S}^{m-1}$, we obtain the assertion of Theorem~\ref{thm:main}.

We remark that when $\M$ has rank 1, it is easy to show that the constant $C$ in Proposition~\ref{prop:G_int} is independent of $v_0$, since $G$ acts transitively on $\mathbb{S}^{m-1}$. This is no longer true in the higher rank case. So we are forced to use a family of Iwasawa decompositions in what follows, and obtain uniform estimates for such.

To establish Proposition~\ref{prop:G_int}, let $v_0 \in \mathbb{S}^{m-1}$. We use a suitable Iwasawa decomposition of $G$ adapted to $v_0$. Let $\g$ be the Lie algebra of $G$, let $K \subset G$ be the stabilizer of $x_0$, and let $\k$ be the Lie algebra of $K$. We write $$\g = \k \oplus \p$$ for the Cartan decomposition of $\g$. Given $v_0 \in \mathbb{S}^{m-1}$, we select a vector $H_1 \in \p$, that has unit length with respect to $g_0$, such that
\begin{equation} \label{eq:H1def}
\left. \frac{d}{dt} \right|_{t=0} e^{t {H_1}} x_0 = v_0.
\end{equation}
Then we choose a maximal abelian subalgebra $\a$ of $\g$ contained in $\p$, such that $H_1 \in \a$.
Let $\a^*$ be the set of all linear functionals on $\a$, and $\Sigma$ be the set of restricted roots of $\a$.
We make a choice $\Sigma^+$ of positive roots in $\Sigma$.
Writing
$$
\n := \bigoplus_{\alpha \in \Sigma^+} \g_{\alpha},
$$
and $A$, $N$ for the connected Lie groups with Lie algebras $\a$ and $\n$ respectively, we then have an Iwasawa decomposition
$
G = KAN 
$
of $G$. We determine normalized Haar measures $dk$, $da$, $dn$ and $dg$ on $K$, $A$, $N$ and $G$ respectively, as in Section~\ref{sect:int}.

Now the integral in left hand side of \eqref{eq:intG_est} can be written, via \eqref{eq:d(kan)}, as
\begin{equation} \label{eq:intKAN}
\int_K \int_A \int_N \langle f(kanx_0), d_{kan} v_0 \rangle \langle \phi(kanx_0), d_{kan} v_0 \rangle \,dn\,da\,dk.
\end{equation}
To estimate this, recall we had chosen a unit vector $H_1 \in \a$ such that  \eqref{eq:H1def} holds. We complete $H_1$ to a good basis on $\s = \a \oplus \n$; in other words, we pick $H_2, \dots, H_r \in \a$, and $Y_1, \dots, Y_{m-r} \in \n$, such that $\{H_1, \dots, H_r, Y_1, \dots, Y_{m-r}\}$ form an orthonormal basis of $\s$ with respect to $g_0$, and such that each $Y_j$ belongs to $\g_{\alpha_j}$ for some $\alpha_j \in \Sigma^+$. This is possible by the orthogonality relations \eqref{item:ortho1} and \eqref{item:ortho2} in Section~\ref{sect:Kill_metric}.

Now let $\a_1$, $\a'$ be the spans of $\{H_1\}$ and $\{H_2,\dots,H_r\}$ respectively. Let $A_1$ and $A'$ be the Lie subgroups of $A$ with Lie algebras $\a_1$ and $\a'$ respectively. Let $S'=A'N=NA'$. Let $da_1$, $da'$ be Haar measures on $A_1$ and $A'$, given by the Riemannian volume elements associated to the restriction of $g_0$ to $A_1$ and $A'$. Then
$$
\int_A F(a) da = \int_{A_1} \int_{A'} F(a_1 a') da_1 da'
$$
for all compactly supported continuous functions $F$ on $A$. We will use this to evaluate the integral over $A$ in \eqref{eq:intKAN}. The key then is the following basic estimate:
\begin{prop} \label{prop:basicest}
Suppose $f$ is a smooth vector field on $\M$ with $\div f = 0$, and $\phi$ is a smooth vector field on $\M$ with compact support. For any vector $v_0 \in \mathbb{S}^{m-1} \subset T_{x_0}\M$, let $KAN$ be an Iwasawa decomposition of $G = I_0(\M)$ adapted to $v_0$, and choose subgroups $A_1$, $A'$ of $A$ as above. Then we have
\begin{align}
& \left| \int_{A'} \int_N \langle f(n a' x_0), d_{na'} v_0 \rangle \langle \phi(n a' x_0), d_{na'} v_0 \rangle \,dn\,da' \right| \label{eq:basic_codim1_est} \\
\leq & C \Vert f(na'x_0)\Vert_{L^1(dnda')}^{1-\frac{1}{m}} \Vert f(n a x_0) \Vert_{L^1(dnda)}^{\frac{1}{m}}  \Vert \phi(a'n x_0)\Vert_{W^{1,m}(dnda')} \notag
\end{align}
where
$$
\Vert f(na'x_0)\Vert_{L^1(dnda')} := \int_{A'} \int_N |f(na'x_0)| \,dn\,da',
$$
$$
\Vert f(n a x_0) \Vert_{L^1(dnda)} := \int_A \int_N |f(n a x_0)| \,dn\,da,
$$
and
$$
 \Vert \phi(a' n x_0)\Vert_{W^{1,m}(dnda')} := \left(\int_{A'} \int_N |\phi(a' n x_0)|^m+|\nabla \phi(a' n x_0)|^m \,dn\,da' \right)^{\frac{1}{m}}.
$$
The constant $C$ is independent of the choice of $v_0$.
\end{prop}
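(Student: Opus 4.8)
\emph{Proof proposal.} The plan is to recast the left-hand side of \eqref{eq:basic_codim1_est} as a flux integral through the single hypersurface $\Sigma_0 := S' x_0 \subset \M$ and then to run the splitting-and-integration-by-parts scheme of \cite{MR2078071}. First I would record a geometric observation: regard the unit vector $H_1 \in \a$ fixed above as an $S$-invariant vector field on $\M$; by the orthogonality relations \eqref{item:ortho1} and \eqref{item:ortho2} it is everywhere $g_0$-orthogonal to the tangent spaces of the leaves $S' x$, so it is a unit normal field along $\Sigma_0$, and one checks directly that $d_{s'} v_0$ is precisely the value of $H_1$ at $s' x_0$. Hence, writing $f^1 := \langle f, H_1 \rangle$ and $\phi^1 := \langle \phi, H_1 \rangle$, the integral in \eqref{eq:basic_codim1_est} equals, up to a fixed normalization constant, the flux $\int_{\Sigma_0} f^1 \phi^1$. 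Since $|f^1| \le |f|$ and, by the Koszul formula together with the uniform bound on the structure constants of a good basis (Section~\ref{sect:covar}), $|\nabla' \phi^1| \lesssim |\nabla \phi| + |\phi|$ on $\Sigma_0$, the restriction $\phi^1$ belongs to $C^\infty_c(S')$ with $\Vert \phi^1 \Vert_{W^{1,m}(S')}$ bounded by the $W^{1,m}$-norm of $\phi$ along $\Sigma_0$, with absolute constants. Applying Lemma~\ref{lem:decompS'unif} to $\Phi := \phi^1$ with exponent $p = m > m-1 = \dim S'$ and a parameter $\lambda > 0$ to be optimized, I get $\phi^1 = \Phi_1 + \Phi_2$, and the $\Phi_1$-part is dispatched immediately: $\bigl| \int_{\Sigma_0} f^1 \Phi_1 \bigr| \le \Vert \Phi_1 \Vert_{L^\infty(S')} \Vert f \Vert_{L^1(\Sigma_0)} \lesssim \lambda^{1/m} \Vert \phi^1 \Vert_{W^{1,m}(S')} \Vert f \Vert_{L^1(\Sigma_0)}$.

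The crux is the $\Phi_2$-part. Because $\M$ is of non-compact type, it is diffeomorphic to $S = S' A_1$, so $(s',t) \mapsto s' e^{t H_1}$ identifies $\M$ with $S' \times \mathbb{R}$ in such a way that $t$ is the signed distance from $\Sigma_0$ along the normal geodesics and the leaf $\{t = t_0\}$ is the isometric image $L_{e^{t_0 H_1}} \Sigma_0$. Fixing a cutoff $\chi$ with $\chi(0) = 1$ and $\operatorname{supp} \chi \subset [0,1]$, I set $\Psi(s' e^{t H_1} x_0) := \Phi_2(s') \chi(t)$. Since the pullback of $\Phi_2$ along $L_{e^{t_0 H_1}}$ differs from $\Phi_2$ only by the conjugation $\Ad(e^{t_0 H_1})|_{\s'}$, whose operator norm is at most $e^{\max_{\alpha \in \Sigma^+} |\alpha(H_1)|}$ for $t_0 \in [0,1]$ --- a bound uniform over the family of Iwasawa decompositions of $G$, as the roots are uniformly bounded on the $g_0$-unit sphere of $\a$ --- one obtains
\[
 \Vert \Psi \Vert_{L^\infty(\M)} + \Vert \nabla \Psi \Vert_{L^\infty(\M)} \lesssim \Vert \Phi_2 \Vert_{L^\infty(S')} + \Vert \nabla' \Phi_2 \Vert_{L^\infty(S')},
\]
while $\Psi f$ has compact support because $\Phi_2$ does. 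Now I apply the divergence theorem on the half-space $\Omega := \{ t > 0 \}$, whose boundary is exactly $\Sigma_0$ with outward unit normal $-H_1$ (that $\Omega$ is genuinely one side of $\Sigma_0$ is where non-compact type enters: the geodesic $t \mapsto e^{t H_1} x_0$ is a proper line, not a closed curve), and use $\div f = 0$, which gives $\div(\Psi f) = \langle \nabla \Psi, f \rangle$, to conclude
\[
 \Bigl| \int_{\Sigma_0} f^1 \Phi_2 \Bigr| = \Bigl| \int_{\Omega} \langle \nabla \Psi, f \rangle \, dV \Bigr| \le \Vert \nabla \Psi \Vert_{L^\infty(\M)} \int_{\M} |f| \, dV \lesssim \lambda^{-\frac{m-1}{m}} \Vert \phi^1 \Vert_{W^{1,m}(S')} \Vert f \Vert_{L^1(\M)}.
\]
Adding the two contributions and optimizing --- taking $\lambda \simeq \Vert f \Vert_{L^1(\M)} / \Vert f \Vert_{L^1(\Sigma_0)}$ --- produces the geometric-mean estimate of \eqref{eq:basic_codim1_est}; passing through the Haar-measure normalizations of \eqref{eq:intAN=intM}--\eqref{eq:intNA=intM} rewrites $\Vert f \Vert_{L^1(\M)}$, $\Vert f \Vert_{L^1(\Sigma_0)}$ and the Sobolev norm along $\Sigma_0$ as the Iwasawa-coordinate quantities in the statement, and every estimate above is uniform in $v_0 \in \mathbb{S}^{m-1}$.

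The main obstacle is the construction of the ambient extension $\Psi$ with the quantitative bound displayed above: one must confine $\operatorname{supp} \Psi$ to a slab of fixed bounded width, so that the conjugations $\Ad(e^{t H_1})$ --- equivalently, the distortion of the leaf geometry along the normal geodesics --- remain uniformly controlled. Extending $\Phi_2$ as a function constant along the entire family of normal geodesics would have an uncontrolled gradient, since in the negatively curved directions the leaves $\{ t = \text{const} \}$ spread apart exponentially; it is precisely this restriction to bounded scales that forces the decomposition $\phi^1 = \Phi_1 + \Phi_2$ and the optimization in $\lambda$, the bounded piece $\Phi_1$ paying for the part of $\phi^1$ that cannot be extended cheaply. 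A residual, purely bookkeeping difficulty is tracking the several Haar-measure normalizations (the orderings $n a'$ versus $a' n$, and $dV$ versus $ds' \, dt$) so that the $L^1$- and $W^{1,m}$-quantities coincide exactly with those in \eqref{eq:basic_codim1_est}.
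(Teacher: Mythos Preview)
Your overall plan is exactly the paper's: split $\phi^1=\Phi_1+\Phi_2$ via Lemma~\ref{lem:decompS'unif} with $p=m$, bound the $\Phi_1$-piece trivially, handle the $\Phi_2$-piece by integrating by parts against the divergence-free condition after inserting a cutoff $\chi$ in the $H_1$-direction, and then optimize in $\lambda$. The only difference is in how the integration by parts is realized: the paper carries it out by hand in the coordinates $(n,a',t)\mapsto na'e^{tH_1}$ with the \emph{product measure} $dn\,da'\,dt$, invoking the explicit formula \eqref{eq:divf_explicit2} for $\div f=0$ together with Lemmas~\ref{lem:intHi}--\ref{lem:intYi}; you package the same step as the Riemannian divergence theorem applied to $\Psi f$ on the half-space $\{t>0\}$.

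This repackaging, however, creates a discrepancy that is not mere bookkeeping. The induced Riemannian measure on $\Sigma_0$ is the left Haar measure $ds'$ on $S'$, and in the $s'=na'$ parametrization one has $ds'=e^{-2\rho(\log a')}\,dn\,da'$; hence your flux $\int_{\Sigma_0}f^1\phi^1\,d\sigma$ differs from the left-hand side of \eqref{eq:basic_codim1_est} by the unbounded weight $e^{2\rho(\log a')}$ (this weight is trivial only when $r=1$, or for the single $v_0$ for which $H_1$ is dual to $\rho$). Likewise $\Vert f\Vert_{L^1(\Sigma_0)}$ and $\Vert f\Vert_{L^1(dV)}$ do not coincide with $\Vert f(na'x_0)\Vert_{L^1(dn\,da')}$ and $\Vert f(nax_0)\Vert_{L^1(dn\,da)}$: the identities \eqref{eq:intAN=intM}--\eqref{eq:intNA=intM} exhibit the modular factor rather than remove it. Your argument therefore proves a correct Riemannian variant of the estimate---one that could indeed be fed into a correspondingly modified averaging step toward Theorem~\ref{thm:main}---but it does not establish Proposition~\ref{prop:basicest} as written. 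To obtain the stated inequality one must integrate by parts against $dn\,da'\,dt$ rather than $dV$, which is precisely why the paper's computation picks up the zeroth-order terms $2\rho(H_i)f^i$ from \eqref{eq:divf_explicit2} and the conjugation factors $e^{t\alpha_{\ell-r}(H_1)}$ explicitly (both harmless because $\chi$ has compact support in $t$).
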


Assume this proposition for the moment. Then for each $a_1 \in A_1$, we apply this inequality to the vector fields $f_{a_1}$ and $\phi_{a_1}$, where $f_{a_1}(x) := d_{a_1^{-1}} f(a_1 x)$, and $\phi_{a_1}(x) := d_{a_1^{-1}} \phi(a_1 x)$ for all $x \in \M$. We then get, on the left-hand side,
$$
\left|\int_{A'} \int_N \langle f(a_1 n a' x_0),d_{a_1 n a'} v_0 \rangle \langle \phi(a_1 n a' x_0),d_{a_1 n a'} v_0 \rangle \,dn\,da' \right|;
$$
This is because
$$\langle f_{a_1}(n a' x_0),d_{na'} v_0 \rangle = \langle d_{{a_1}^{-1}} f(a_1 n a' x_0), d_{na'} v_0 \rangle = \langle f(a_1 n a' x_0), d_{a_1 n a'} v_0 \rangle,$$
and similarly for $\phi_{a_1}$. Making the change of variable $n \mapsto a_1^{-1} n a_1$, the left hand side of \eqref{eq:basic_codim1_est} is thus
\begin{equation}
 e^{-2\rho(\log a_1)} \left|\int_{A'} \int_N \langle f(n a_1 a' x_0),d_{n a_1 a'} v_0 \rangle \langle \phi(n a_1 a' x_0),d_{n a_1 a'} v_0 \rangle \,dn\,da' \right|. \label{eq:lhsaverage}
\end{equation}

We now analyze the terms on the right hand side of \eqref{eq:basic_codim1_est} one by one. We have
\begin{align*}
\Vert f_{a_1}(na'x_0)\Vert_{L^1(dnda')}
&= \int_{A'} \int_N |f_{a_1}(na'x_0)| \,dn\,da' \\
&= \int_{A'} \int_N |f(a_1 n a' x_0)| \,dn\,da',
\end{align*}
since $$|f_{a_1}(n a' x_0)| = |d_{{a_1}^{-1}} f({a_1}n a' x_0)| = |f({a_1}n a' x_0)|
$$
(again the last equality follows from left-invariance of the Riemannian metric). Making again the change of variables $n \mapsto a_1^{-1} n a_1$, it follows that
\begin{equation} \label{eq:rhsaverage1}
\Vert f_{a_1}(na'x_0)\Vert_{L^1(dnda')} =  e^{-2\rho(\log a_1)} \int_{A'} \int_N |f(n a_1 a' x_0)| \,dn\,da'.
\end{equation}
Also,
\begin{equation*}
\begin{split}
\Vert f_{a_1}(n a x_0) \Vert_{L^1(dnda)}
&= \int_A \int_N |f(a_1 n a x_0)| \,dn\,da \\
&= e^{-2\rho(\log a_1)} \int_A \int_N |f(n a_1 a x_0)| \,dn\,da
\end{split}
\end{equation*}
(in the last line we made again the change of variables $n \mapsto a_1^{-1} n a_1$). Since $da$ is a Haar measure on $A$, this shows
\begin{equation}
\Vert f_{a_1}(n a x_0) \Vert_{L^1(dnda)} =  e^{-2\rho(\log a_1)} \int_A \int_N |f(nax_0)| \,dn\,da   \label{eq:rhsaverage2}
\end{equation}
Finally, we claim
\begin{multline}
\Vert \phi_{a_1}(a' n x_0)\Vert_{W^{1,m}(dnda')}  \\
= \left( \int_{A'} \int_N |\phi(a_1 a'nx_0)|^m + |(\nabla \phi) (a_1 a'nx_0)|^m \,dn\,da' \right)^{\frac{1}{m}}. \label{eq:rhsaverage3}
\end{multline}
Indeed, $|\phi_{a_1}(a'nx_0)| = |\phi(a_1 a' nx_0)|$, and we will show that $$|(\nabla \phi_{a_1}) (a'nx_0)| = |(\nabla \phi) (a_1 a' nx_0)|.$$ This is because if $\{X_{\ell}\}_{\ell=1}^m$ is an orthonormal frame at $a'nx_0$, then $\{d_{a_1} X_{\ell}\}_{\ell=1}^m$ is an orthonormal frame at $a_1 a' nx_0$, from which it follows that
$$
|(\nabla \phi_{a_1}) (a'nx_0)|^2
= \sum_{\ell=1}^m |(\nabla_{X_{\ell}} \phi_{a_1}) (a'nx_0)|^2
= \sum_{\ell=1}^m |(\nabla_{d_{a_1} X_{\ell}} \phi) (a_1 a'nx_0)|^2
$$
(We use also the left-invariance of the Levi-Civita connection $\nabla$ on $\M$.) Hence $|(\nabla \phi_{a_1}) (a'nx_0)| = |(\nabla \phi) (a_1a'nx_0)|$, and the identity \eqref{eq:rhsaverage3} follows.

Putting \eqref{eq:lhsaverage}, \eqref{eq:rhsaverage1}, \eqref{eq:rhsaverage2} and \eqref{eq:rhsaverage3} back into \eqref{eq:basic_codim1_est}, and multiplying both sides of \eqref{eq:basic_codim1_est} by $e^{2\rho(\log a_1)}$, we get
\begin{multline*}
 \left| \int_{A'} \int_N \langle f(na_1 a' x_0),d_{na_1 a'}v_0 \rangle \langle \phi(na_1 a' x_0), d_{na_1 a'}v_0 \rangle \,dn\,da' \right| \\
\leq  \, C \left( \int_{A'} \int_N |f(n a_1 a' x_0)| \,dn\,da' \right)^{1-\frac{1}{m}}
\left( \int_A \int_N |f(nax_0)| \,dn\,da \right)^{\frac{1}{m}} \\
 \qquad \cdot \left( \int_{A'} \int_N |\phi(a_1 a'nx_0)|^m + |(\nabla \phi) (a_1 a'nx_0)|^m \,dn\,da' \right)^{\frac{1}{m}}
\end{multline*}

We now integrate both sides of this equation over $A_1$ with respect to the Haar measure $da_1$. Then applying H\"older's inequality in the integral over $A_1$ on the right hand side, we obtain
\begin{multline*}
\left| \int_A \int_N \langle f(nax_0),d_{na} v_0 \rangle  \langle \phi(nax_0),d_{na}v_0 \rangle \,dn\,da\right|\\
\leq  \, C \left( \int_A \int_N |f(nax_0)| \,dn\,da \right) \hspace{2cm}\\
\cdot \left(\int_A \int_N |\phi(anx_0)|^m + |(\nabla \phi)(anx_0)|^m \,dn\,da \right)^{\frac{1}{m}}.
\end{multline*}
By applying this to $f_k$ and $\phi_k$, where
$f_k(x) := d_{k^{-1}} f(kx)$ and $\phi_k(x):= d_{k^{-1}} \phi(kx)$, we then have
\begin{multline*}
\left| \int_A \int_N \langle f(kna x_0),d_{kna}v_0 \rangle  \langle \phi(knax_0),d_{kna}v_0 \rangle  \,dn\,da\right|\\
\leq  \, C \left( \int_A \int_N |f(knax_0)| \,dn\,da \right)  \hspace{2cm}\\
\cdot \left(\int_A \int_N |\phi(kanx_0)|^m+|(\nabla \phi) (kanx_0)|^m \,dn\,da \right)^{\frac{1}{m}}.
\end{multline*}
In the last line, the integral involving $\phi$ is bounded by $\Vert \phi\Vert_{L^m(dV)} + \Vert \nabla \phi\Vert_{L^m(dV)}$. This is because $k$ acts by isometry on $\M$, which allows us to drop the $k$ in the integral, and then apply \eqref{eq:intAN=intM}. It follows that
\begin{multline*}
 \left| \int_A \int_N \langle f(kna x_0),d_{kna}v_0 \rangle  \langle \phi(knax_0),d_{kna}v_0 \rangle  \,dn\,da\right|\\
\leq \, C \left( \int_A \int_N |f(knax_0)| \,dn\,da \right)
\left(\Vert \phi\Vert_{L^m(dV)} +  \Vert \nabla \phi\Vert_{L^m(dV)} \right).
\end{multline*}
It now remains to integrate both sides over $K$. It follows that \eqref{eq:intKAN}, and hence the left hand side of \eqref{eq:intG_est}, are bounded by
$$
C \left( \int_K \int_A \int_N |f(knax_0)| \,dn\,da\,dk \right) \left(\Vert \phi\Vert_{L^m(dV)} +  \Vert \nabla \phi\Vert_{L^m(dV)} \right).
$$
But
\begin{align*}
\int_K \int_A \int_N |f(knax_0)| \,dn\,da\,dk
= \int_G |f(gx_0)| dg = \Vert f\Vert_{L^1(dV)},
\end{align*}
the first equality following from \eqref{eq:d(kna)}, the second from \eqref{eq:dgK=dV}.
Altogether, this shows that
\begin{multline*}
\left| \int_G \langle f(gx_0), d_g v_0 \rangle \langle \phi(gx_0), d_g v_0 \rangle dg \right|\\
\leq C \Vert f\Vert_{L^1(dV)} \left(\Vert \phi\Vert_{L^m(dV)} +  \Vert \nabla \phi\Vert_{L^m(dV)} \right).
\end{multline*}
This is almost the estimate \eqref{eq:intG_est} we wanted to prove, except that on the right hand side we have an additional $\Vert \phi\Vert_{L^m(dV)}$. Nevertheless, one can complete the proof of \eqref{eq:intG_est}, once one invokes the following Sobolev lemma, with $p = m$:

\begin{lem} \label{lem:Hardy}
For any compactly supported smooth vector field $\phi$ on $\M$, and any $1 \leq p < \infty$, we have
$$
\Vert \phi\Vert_{L^p(dV)} \leq C_p \Vert \nabla \phi\Vert_{L^p(dV)}.
$$
\end{lem}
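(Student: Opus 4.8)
The plan is to prove a Hardy-type inequality on $\M$ reflecting the exponential volume growth of a symmetric space of non-compact type; quantitatively this growth is encoded in the non-vanishing of the half-sum of positive roots $\rho$ (equivalently, $|\rho|^2$ is the positive bottom of the $L^2$-spectrum of $\Delta$, as recalled in the introduction, so in particular $\rho \neq 0$). Since $\operatorname{div}$ depends only on the volume form, and all $G$-invariant volume forms on $\M$ are proportional while all $G$-invariant metrics on $\M$ are uniformly comparable (the pointwise comparison operator at $x$ being conjugate, via $dL_g$, to the one at $x_0$, hence with $x$-independent eigenvalues), it is enough to prove the estimate for the Killing metric $g = g_0$, at the cost of a harmless change of constant. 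So I fix an Iwasawa decomposition $G = KAN$, identify $\M$ with $S = AN$, and fix a good basis $X_1, \dots, X_m$ of $\s = \a \oplus \n$, viewed as $g_0$-orthonormal $S$-invariant vector fields on $\M$. Since $\rho \neq 0$, I choose a unit vector $H \in \a$ with $\rho(H) = |\rho| > 0$ (take $H$ to be $|\rho|^{-1}$ times the $g_0|_{\a}$-dual of $\rho$).

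I then isolate two ingredients. First, by the computation in Section~\ref{sect:div}, the $S$-invariant vector field $H$ satisfies $\operatorname{div} H = -2\rho(H) = -2|\rho|$, so for every compactly supported Lipschitz function $F$ on $\M$, integrating the a.e.\ identity $\operatorname{div}(FH) = HF + F\operatorname{div} H$ and using that $FH$ has compact support gives
\[
  \int_\M HF \, dV = 2|\rho| \int_\M F \, dV .
\]
Second, writing $\phi = \sum_{\ell=1}^m \phi^\ell X_\ell$ and using $\nabla_H X_\ell = 0$ from \eqref{eq:parallel}, one has $\nabla_H \phi = \sum_\ell (H\phi^\ell) X_\ell$, hence $|\nabla_H \phi|^2 = \sum_\ell (H\phi^\ell)^2$; since also $H(|\phi|^2) = 2\sum_\ell \phi^\ell\, H\phi^\ell$, the Cauchy--Schwarz inequality yields $|H(|\phi|^2)| \le 2|\phi|\,|\nabla_H \phi| \le 2|\phi|\,|\nabla \phi|$, i.e.\ $\bigl|H|\phi|\bigr| \le |\nabla \phi|$ almost everywhere on $\{\phi \neq 0\}$. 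This is precisely a Kato-type inequality in the single direction $H$, which is why \eqref{eq:parallel} was recorded.

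To combine them, fix $\delta > 0$ and apply the first identity to $F_\delta := (|\phi| + \delta)^p - \delta^p$: this is Lipschitz with compact support (it vanishes where $\phi$ does), and $HF_\delta = p(|\phi| + \delta)^{p-1}\, H|\phi|$ almost everywhere. Using the second ingredient and then Hölder's inequality,
\[
  2|\rho| \int_\M \bigl[(|\phi| + \delta)^p - \delta^p\bigr] dV
  = p \int_\M (|\phi| + \delta)^{p-1}\, H|\phi| \, dV
  \le p \Bigl( \int_{\operatorname{supp}\phi} (|\phi| + \delta)^p \, dV \Bigr)^{1 - \frac1p} \Vert \nabla \phi \Vert_{L^p(dV)} .
\]
Letting $\delta \to 0$ by dominated convergence (all integrands dominated by $(|\phi| + 1)^p \mathbf{1}_{\operatorname{supp}\phi}$, which is integrable) gives $2|\rho| \Vert \phi\Vert_{L^p(dV)}^p \le p \Vert \phi\Vert_{L^p(dV)}^{p-1} \Vert \nabla \phi\Vert_{L^p(dV)}$, and dividing by $\Vert \phi\Vert_{L^p(dV)}^{p-1}$ (finite, the case $\phi\equiv 0$ being trivial) yields the lemma with $C_p = p/(2|\rho|)$.

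The only delicate point is the low regularity of $|\phi|^p$ near the zero set of $\phi$ (and at $p = 1$), which is exactly what forces the $\delta$-regularization and the use of Lipschitz test fields above; everything else is routine, since the geometry of $\M$ enters only through the single clean inequality $\operatorname{div} H = -2|\rho| < 0$.
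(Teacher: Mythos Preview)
Your proof is correct. Both your argument and the paper's hinge on the same two facts: a unit vector $H\in\a$ with $\rho(H)>0$ (you optimize by taking $H$ dual to $\rho$, so $\rho(H)=|\rho|$), and the parallelism $\nabla_H X_\ell=0$ from \eqref{eq:parallel}. The routes from there differ, though. The paper first proves a one-dimensional weighted Hardy inequality $\int_{\mathbb{R}}|h|^p e^{-\lambda\tau}\,d\tau\le (p/\lambda)^p\int_{\mathbb{R}}|h'|^p e^{-\lambda\tau}\,d\tau$ via the fundamental theorem of calculus plus Minkowski, applies it along the integral curves of $H$ (the weight $e^{-2\rho(\log a)}$ coming from the integration formula \eqref{eq:intNA=intM}) to get a scalar bound $\|\Phi\|_{L^p}\le C_p\|H\Phi\|_{L^p}$, and then runs that componentwise with $\Phi=\langle\phi,X_\ell\rangle$. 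You instead package the exponential volume growth as the single global identity $\int_\M HF\,dV=2|\rho|\int_\M F\,dV$ (from $\operatorname{div}H=-2|\rho|$), feed in $F\approx|\phi|^p$ via the $\delta$-regularization, and close with the Kato bound $|H|\phi||\le|\nabla\phi|$ and H\"older. Your version is a bit more direct---no one-dimensional detour, no componentwise step, and it lands on the sharp constant $p/(2|\rho|)$---while the paper's version has the merit of isolating the intermediate scalar inequality $\|\Phi\|_{L^p}\le C_p\|H\Phi\|_{L^p}$, which is of independent interest.
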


\begin{proof}
Indeed, recall the following elementary inequality on $\mathbb{R}$: If $h$ is a smooth function with compact support on $\mathbb{R}$, then for any $\lambda > 0$, $1 \leq p < \infty$, we have
\begin{equation} \label{eq:HardyR}
\int_{\mathbb{R}} |h(\tau)|^p e^{-\lambda \tau} d\tau \leq \left(\frac{p}{\lambda}\right)^p \int_{\mathbb{R}} |h'(\tau)|^p e^{-\lambda \tau} d\tau.
\end{equation}
This inequality is just Hardy's inequality in disguise; since its proof is very simple, we reproduce it below. First, note that
$$
h(\tau) = \int_0^{\infty} h'(\tau-t) dt,
$$
from which it follows that
$$
|h(\tau)| e^{-\frac{\lambda}{p} \tau} = \left| \int_0^{\infty} h'(\tau-t) e^{-\frac{\lambda}{p}(\tau-t)} \, e^{-\frac{\lambda}{p} t} dt \right|.
$$
Taking $L^p(d\tau)$ norm of both sides, using Minkowski's inequality, and then raising to the $p$-th power, we obtain \eqref{eq:HardyR}.

The way we will apply \eqref{eq:HardyR} is as follows. Let $G = I_0(\M)$. Pick an Iwasawa decomposition $G = KAN$ of $G$. Write $da$ and $dn$ for the normalized Haar measures on $A$ and $N$ respectively, as in Section~\ref{sect:int}. Also fix $H \in \a$, the Lie algebra of $A$, such that
$$
\rho(H) > 0,
$$
where $\rho$ is half the sum of the positive roots (such $H$ clearly exists: just pick a non-zero vector in the interior of the positive Weyl chamber). We normalize $H$ so that it has unit length with respect to the Killing metric $g_0$. Now let $\a''$ be the orthogonal complement of $H$ in $\a$ with respect to $g_0$. Write $A''$ for the connected Lie group whose Lie algebra is $\a''$, and $da''$ for the normalized Haar measure on $A''$, so that
$$
\int_A F_1(a) da = \int_{\mathbb{R}} \int_{A''} F_1(a'' e^{\tau H}) \, da'' \, d\tau
$$
for any compactly supported continuous function $F_1$ on $A$.

Suppose $\Phi$ is a compactly supported smooth function on $\M$. For $n \in N$, $a'' \in A''$, we apply \eqref{eq:HardyR} to $h(\tau) := \Phi(n a'' e^{\tau H}x_0)$ and $\lambda:= 2\rho({H})$. Then again identifying $H \in \a$ with an $S$-invariant vector field on $\M$, where $S:=AN=NA$, we obtain
$$
\int_{\mathbb{R}} |\Phi(n a'' e^{\tau {H}} x_0)|^p e^{-2\rho(\tau {H})} d\tau \leq C_p^p \int_{\mathbb{R}} \left| (H \Phi)(n a'' e^{\tau {H}} x_0) \right|^p e^{-2 \rho(\tau {H})} d\tau
$$
for all $1 \leq p < \infty$, where $C_p = \frac{p}{2\rho({H})}$ (which is finite since we chose $H$ such that $\rho(H) > 0$). We now multiply both sides by $e^{-2\rho(\log a'')}$, and integrate over $a'' \in A''$ and $n \in N$ with respect to $da''$ and $dn$. This shows
$$
\int_N \int_A |\Phi(n a x_0)|^p e^{-2\rho(\log a)} da dn
\leq C_p^p \int_N \int_A \left| ({H} \Phi)(n a x_0) \right|^p e^{-2\rho(\log a)} da dn
$$
for all $1 \leq p < \infty$.
In view of \eqref{eq:intNA=intM}, this yields
\begin{equation} \label{eq:Hardy1}
\Vert \Phi\Vert_{L^p(dV)} \leq C_p \Vert {H} \Phi\Vert_{L^p(dV)}
\end{equation}
for all $1 \leq p < \infty$.

Now recall we had a good basis $\{X_{\ell}\}_{1 \leq \ell \leq m}$ of $\s$ from Section~\ref{sect:Kill_metric}, which we think of as a basis of $S$-invariant vector fields on $\M$. The vector fields $X_{\ell}$ satisfy
$\nabla_H X_{\ell} = 0$ by (\ref{eq:parallel}).
We apply \eqref{eq:Hardy1} to the function $\Phi:= \langle \phi, X_{\ell} \rangle$ on $\M$ for $1 \leq {\ell} \leq m$. Then we obtain
$$
{H} \Phi = \langle \nabla_{H} \phi, X_{\ell} \rangle +  \langle  \phi, \nabla_{H} X_{\ell} \rangle =  \langle \nabla_{H} \phi, X_{\ell} \rangle,
$$
so
$$
|{H} \Phi| \leq |\nabla \phi|.
$$
In view of \eqref{eq:Hardy1}, this implies
$$
\Vert \phi\Vert_{L^p(dV)} \leq C_p \Vert \nabla \phi\Vert_{L^p(dV)}
$$
for all $1 \leq p < \infty$, and this concludes the proof of Lemma~\ref{lem:Hardy}.
\end{proof}

It remains now to prove Proposition~\ref{prop:basicest}. Suppose $v_0 \in \mathbb{S}^{m-1}$.  Let $G =KAN$ be the Iwasawa decomposition of $G$ adapted to $v_0$, that we chose in the proof of Proposition~\ref{prop:G_int}, and let's identify $\M$ with the Lie group $S = NA$. So from now on, we think of $f$ and $\phi$ as vector fields on $S$ instead of vector fields on $\M$. Let $\{H_1, \dots, H_r, Y_1, \dots, Y_{m-r}\}$ be the good basis of the Lie algebra $\s$ of $S$, that we chose immediately after equation \eqref{eq:intKAN}. Again we label these as $\{X_1, \dots, X_m\}$, as in \eqref{eq:X_label}. We express $f$ and $\phi$ in terms of this basis: say $$f = \sum_{i=1}^m f^i X_i, \quad \phi = \sum_{i=1}^m \phi^i X_i,$$ where $f^1, \dots, f^m \in C^{\infty}(S)$ and $\phi^1, \dots, \phi^m \in C^{\infty}_c(S)$. Then since $X_1 = {H_1}$ and $v_0 = \left. \frac{d}{dt} \right|_{t=0} e^{t{H_1}} x_0$, we have $X_1$ at $nax_0$ equal to $d_{na}v_0$; since $X_1$ has unit length, this shows $$f^1(na) = \langle f(nax_0), d_{na} v_0 \rangle, \quad \phi^1(na) = \langle \phi(nax_0), d_{na} v_0 \rangle.$$ Hence the desired inequality \eqref{eq:basic_codim1_est} in Proposition~\ref{prop:basicest} translates into
\begin{align}
& \left| \int_{A'} \int_N f^1(na') \phi^1(na')  \,dn\,da'  \right| \notag \\
\leq & \, C \Vert f(na')\Vert_{L^1(dnda')}^{1-\frac{1}{m}} \Vert f(na)\Vert_{L^1(dnda)}^{\frac{1}{m}} \Vert  (|\phi|+|\nabla \phi|)(a'n)\Vert_{L^m(dnda')}. \label{eq:basic_codim1_est_2}
\end{align}
This is to be established under the condition $\div f = 0$, which translates into
\begin{equation} \label{eq:divf_explicit2}
\sum_{\ell=1}^m X_{\ell} f^{\ell} = 2 \sum_{i=1}^r \rho({H_i}) f^{i} \quad \text{on $S$}
\end{equation}
by \eqref{eq:divf_explicit}.

To see that \eqref{eq:basic_codim1_est_2} is true under hypothesis \eqref{eq:divf_explicit2}, we apply Lemma~\ref{lem:decompS'unif}  with $p = m$ to the function $\Phi = \phi^1$ on $S' = A'N$.
Then for any $\lambda > 0$, there exists a decomposition
$$
\phi^1 = \Phi_1 + \Phi_2, \quad \text{on $S'$},
$$
such that $\Phi_1, \Phi_2 \in C^{\infty}_c(S')$, and \eqref{eq:decomp_conclu} holds with $p=m$. Now we split the integral on the left hand side of \eqref{eq:basic_codim1_est_2} into two pieces:
\begin{align*}
&\int_{A'} \int_N f^1(na') \phi^1(na') \,dn\,da'  \\
=& \int_{A'} \int_N f^1(na') \Phi_1(na') \,dn\,da' + \int_{A'} \int_N f^1(na') \Phi_2(na') \,dn\,da' \\
=& I + II.
\end{align*}
The first integral $I$ is easy to estimate:
\begin{align}
|I| & \leq \Vert f(na')\Vert_{L^1(dnda')} \Vert \Phi_1\Vert_{L^{\infty}(S')}  \notag \\
& \leq C \lambda^{\frac{1}{m}} \Vert f(na')\Vert_{L^1(dnda')} \Vert \phi^1 \Vert_{W^{1,m}(S')}. \label{eq:estI}
\end{align}
To estimate the second integral $II$, fix some $\chi \in C^{\infty}_c(A_1)$ with $\chi = 1$ at the identity. We rewrite $II$ as
\begin{align*}
II &= -\int_{A'} \int_N \int_0^{\infty} \frac{d}{dt} [f^1(n a' e^{t {H_1}}) \chi(e^{t {H_1}})] \Phi_2(na') dt \,dn\,da' \\
&= -\int_{A'} \int_N \int_0^{\infty} [({H_1} f^1)(n a' e^{t {H_1}}) \chi(e^{t {H_1}}) \Phi_2(na') \\
&\qquad \qquad + f^1(n a' e^{t {H_1}}) ({H_1} \chi)(e^{t {H_1}}) \Phi_2(na')] dt \,dn\,da' \\
&= \int_{A'} \int_N \int_0^{\infty} \sum_{\ell=2}^{m} (X_{\ell} f^{\ell})(n a' e^{t {H_1}})  \chi(e^{t {H_1}}) \Phi_2(na') dt \,dn\,da'\\
& \qquad - 2 \sum_{i=1}^r \rho({H_i}) \int_{A'} \int_N \int_0^{\infty} f^i(n a' e^{t {H_1}})  \chi(e^{t {H_1}}) \Phi_2(na') dt \,dn\,da' \\
& \qquad - \int_{A'} \int_N \int_0^{\infty} f^1(n a' e^{t {H_1}}) ({H_1} \chi)(e^{t {H_1}}) \Phi_2(na') dt \,dn\,da'
\end{align*}
where we have used our hypothesis \eqref{eq:divf_explicit2} in the last equality. The last two integrals are obviously bounded by
\begin{equation} \label{eq:IIest3}
C \Vert f(na)\Vert_{L^1(dnda)} \Vert \Phi_2\Vert_{L^{\infty}(S')}.
\end{equation}
To estimate the first, note that one can easily integrate by parts for those terms corresponding to $2 \leq \ell \leq r$: we have, for these $\ell$'s, that $X_{\ell} f^{\ell} = H_{\ell} f^{\ell}$, so using Lemma~\ref{lem:intYi} to integrate by parts in $A'$, we get
\begin{multline}
\int_{A'} \int_N \int_0^{\infty} (X_{\ell} f^{\ell})(n a' e^{t {H_1}}) \chi(e^{t {H_1}}) \Phi_2(na') dt \,dn\,da'  \\
= - \int_{A'} \int_N \int_0^{\infty}  f^{\ell}(n a' e^{t {H_1}})  \chi(e^{t {H_1}}) (X_{\ell} \Phi_2)(na') dt \,dn\,da' \label{eq:IIest1}
\end{multline}
for $2 \leq \ell \leq r$.
Now for $r+1 \leq \ell \leq m$, we have $X_{\ell} f^{\ell} = Y_j f^{r+j}$ where $j = \ell-r$. For any $1 \leq j \leq m-r$ and any smooth function $F$ on $S$, if $a_1 \in A_1$, $s' \in S'$, we have
$$
(Y_j F)(s'a_1)
= \left. \frac{d}{d\tau} \right|_{\tau=0} F(s' a_1 e^{\tau Y_j})
= \left. \frac{d}{d\tau} \right|_{\tau=0} (R_{a_1} F)(s' a_1 e^{\tau Y_j} a_1^{-1})
$$
where $R_{a_1} F(x):= F(xa_1)$ is the right translation by $a_1$. But then this is equal to
\begin{align*}
[(\Ad a_1)(Y_j)](R_{a_1} F)(s')
&= ( e^{\ad (\log a_1)} Y_j)(R_{a_1} F)(s') \\
&= e^{\alpha_j(\log a_1)} [Y_j(R_{a_1} F)](s').
\end{align*}
Hence for each $r+1 \leq \ell \leq m$, and each $t \in \mathbb{R}$, we have
$$
\int_N (Y_{\ell-r} f^{\ell})(n a' e^{t {H_1}}) \Phi_2(na') dn
= e^{t\alpha_{\ell -r}({H_1})} \int_N [Y_{\ell-r} (R_{e^{t {H_1}}} f^{\ell})](na') \Phi_2(na') dn,
$$
which by Lemma~\ref{lem:intYi} is equal to
$$
-e^{t\alpha_{\ell -r}({H_1})} \int_N (R_{e^{t {H_1}}} f^{\ell})(na') (Y_{\ell-r} \Phi_2)(na') dn.
$$
This shows that
\begin{multline}
\int_{A'} \int_N \int_0^{\infty} (X_{\ell} f^{\ell})(n a' e^{t {H_1}})  \chi(e^{t {H_1}}) \Phi_2(na') dt \,dn\,da'  \\
= - \int_0^{\infty} \int_{A'} \int_N  f^{\ell}(n a' e^{t {H_1}}) e^{t\alpha_{\ell -r}({H_1})} \chi(e^{t {H_1}}) (X_{\ell} \Phi_2)(na') \,dn\,da' dt
\label{eq:IIest2}
\end{multline}
for $r+1 \leq \ell \leq m$.
Combining \eqref{eq:IIest3}, \eqref{eq:IIest1} and \eqref{eq:IIest2}, we see that
\begin{equation}
\begin{split}
|II| &\leq C \Vert f(na)\Vert_{L^1(dnda)} \left( \Vert \nabla' \Phi_2\Vert_{L^{\infty}(S')} + \Vert \Phi_2\Vert_{L^{\infty}(S')} \right) \\
& \leq C \lambda^{\frac{1}{m}-1} \Vert f(na)\Vert_{L^1(dnda)} \Vert  \phi^1\Vert_{W^{1,m}(S')}. \label{eq:estII}
\end{split}
\end{equation}
(Here we used that $e^{t \alpha_{\ell}(H_1)} \chi(e^{tH_1})$ is a bounded function of $t$.) Adding \eqref{eq:estI} and \eqref{eq:estII}, and optimizing $\lambda$, we get
\begin{multline*}
\left| \int_{A'} \int_N f^1(na') \phi^1(na') \,dn\,da' \right|  \\
\leq \, C \Vert f(na')\Vert_{L^1(dnda')}^{1-\frac{1}{m}} \Vert f(na)\Vert_{L^1(dnda)}^{\frac{1}{m}} \Vert  \phi^1\Vert_{W^{1,m}(S')}.
\end{multline*}
The constant $C$ here comes primarily from Lemma~\ref{lem:decompS'unif}. It is an absolute constant independent of the Iwasawa decomposition $KAN$ of $G$ we chose, and independent of the good basis of $\s$ as well. Hence it is independent of $v_0 \in \mathbb{S}^{m-1}$. It remains to observe that
$$
 \Vert  \phi^1\Vert_{W^{1,m}(S')} \leq C \Vert  (|\phi|+|\nabla \phi|)(a'n)\Vert_{L^m(dnda')}:
$$
indeed
$$
|(\nabla' \phi^1)(a'n)| \leq C (|\phi|+|\nabla \phi|)(a'n),
$$
which holds since
$$
\phi^1(a'n) = \langle \phi(a'n x_0), d_{a'n} v_0 \rangle
$$
and we identified $\phi(a'n)$ with $\phi(a'nx_0)$.
This proves \eqref{eq:basic_codim1_est_2} with a constant $C$ independent of $v_0$, and hence concludes our proof of Proposition~\ref{prop:basicest}. Proposition~\ref{prop:G_int}, and thus Theorem~\ref{thm:main}, are now established in full.

\section{Proof of Theorem~\ref{thm:Laplace_application}} \label{sect:applications}

To prove Theorem~\ref{thm:Laplace_application}, we need some facts about the Riesz transforms on $(\M,g)$. Recall that the $L^2$ spectrum of $\Delta$ is the half-line $(-\infty,-|\rho|^2]$, and in particular stays away from zero. Thus one can define negative powers of $-\Delta$, and they are bounded self-adjoint operators on the space of $L^2$ functions on $\M$. It is known that $\nabla^2 (-\Delta)^{-1}$ extends to a bounded linear operator on $L^p(dV)$ for all $1 < p < \infty$; here $\nabla^2$ is any second order derivative on $\M$, i.e. any $X_i X_j$ where $X_i, X_j$ are smooth vector fields that have lengths at most 1 on $\M$ (see e.g. Lohou\'e~\cite[Theorem II]{MR786621}). Combined with our Lemma~\ref{lem:Hardy}, we also see that $\nabla (-\Delta)^{-1}$ and $(-\Delta)^{-1}$ are bounded on $L^p(dV)$ for all $1 < p < \infty$, i.e.
\begin{equation} \label{eq:fracint1}
\|X_i (-\Delta)^{-1} h\|_{L^p(dV)} \lesssim \|h\|_{L^p(dV)},
\end{equation}
and
\begin{equation} \label{eq:fracint2}
\|(-\Delta)^{-1} h\|_{L^p(dV)} \lesssim \|h\|_{L^p(dV)}
\end{equation}
for $1 < p < \infty$ and $h \in L^p(dV)$.

Suppose $f$ is a smooth vector field on $\M$ with $\div f = 0$, and $\varphi$ is a compactly supported smooth vector field on $\M$ with $\|\varphi\|_{L^{\infty}(dV)} + \|\nabla \varphi\|_{L^{\infty}(dV)} \leq 1$. Let $u \in L^2(dV)$ be the solution of the equation
$$
\Delta u = \langle f, \varphi \rangle.
$$
To study $\nabla u$, we pick an orthonormal basis of global vector fields $X_1, \dots, X_m$ on $(\M,g)$ as follows. Let $G = I_0(\M)$, and pick an Iwasawa decomposition $G = ANK$. Then $\M = G/K$ is diffeomorphic to the Lie group $S:=AN$, and the Riemannian metric $g$ on $\M$ induces a left-invariant metric on $S$, so we can pick an orthonormal basis of global vector fields $X_1, \dots, X_m$ on $\M$, that also forms a basis of left-invariant vector fields on $S$. It follows that for any $1 \leq i, j \leq m$, the commutator of the vector fields $X_i$ and $X_j$ is a constant linear combination of $X_1, \dots, X_m$; in other words, there exists constants $c_{ij}^k$, for $1 \leq i,j,k \leq m$, such that $[X_i,X_j] = \sum_{k=1}^m c_{ij}^k X_k$ for all $1 \leq i, j \leq m$. In particular, by a similar calculation to the one in Section~\ref{sect:div}, $\div X_i$ is a constant, say $c_i$, for any $1 \leq i \leq m$. Then writing $$X_i^* := -X_i+c_i,$$ we have $\int_{\M} (X_i u) \, h \, dV = \int_{\M} u \, (X_i^* h) \, dV$ for all test functions $u$ and $h$, and any $1 \leq i \leq m$. Now
$$
-\Delta (X_i u) = X_i (-\Delta u) + [X_i, -\Delta] u,
$$
and $[X_i, -\Delta]$ is a second order operator of the form $\sum_{j,k=1}^m a_{jk} X_j X_k + \sum_{j=1}^m b_j X_j$ for some constants $a_{jk}$ and $b_j$. Thus writing $F = -\Delta u = - \langle f, \varphi \rangle$, we get
$$
X_i u = (-\Delta)^{-1} X_i F + (-\Delta)^{-1} [X_i, -\Delta] (-\Delta)^{-1} F,
$$
and hence
\begin{align} \label{eq:Xiu_dual}
 \int_{\M} (X_i u) \, h \, dV & = \int_{\M} F \, X_i^* (-\Delta)^{-1} h \, dV  + \int_{\M} F \, (-\Delta)^{-1} [X_i, -\Delta]^* (-\Delta)^{-1} h \, dV 
\end{align}
for any test function $h \in C^{\infty}_c(\M)$, where $[X_i, -\Delta]^* = \sum_{j,k=1}^m a_{jk} X_k^* X_j^* + \sum_{j=1}^m b_j X_j^*$. The first term on the right hand side is just
$$
\int_{\M} \langle f, [X_i^* (-\Delta)^{-1} h] \varphi \rangle dV,
$$
and can be estimated by Theorem~\ref{thm:main} by taking $\phi = [X_i^* (-\Delta)^{-1} h] \varphi$; since
$$
\nabla \phi = [\nabla X_i^* (-\Delta)^{-1} h] \varphi + [X_i^* (-\Delta)^{-1} h] (\nabla \varphi),
$$
and since $\|\varphi\|_{L^{\infty}(dV)} + \|\nabla \varphi\|_{L^{\infty}(dV)} \leq 1$, by the result about the Riesz transforms cited above, as well as (\ref{eq:fracint1}) and (\ref{eq:fracint2}), we see that
$$
\|\nabla \phi\|_{L^m(dV)} \leq C \|h\|_{L^m(dV)}.
$$
This shows that the first term on the right hand side of (\ref{eq:Xiu_dual}) is bounded by $C\|f\|_{L^1(dV)} \|h\|_{L^m(dV)}$.
Similarly, the second term on the right hand side of (\ref{eq:Xiu_dual}) is bounded by $C\|f\|_{L^1(dV)} \|h\|_{L^m(dV)}$: indeed there we take $\phi$ to be $[(-\Delta)^{-1} [X_i, -\Delta]^* (-\Delta)^{-1} h] \varphi$, and then
$$
\nabla \phi = [\nabla (-\Delta)^{-1} [X_i, -\Delta]^* (-\Delta)^{-1} h] \varphi + [(-\Delta)^{-1} [X_i, -\Delta]^* (-\Delta)^{-1} h] \nabla \varphi,
$$
so
$$
\|\nabla \phi\|_{L^m(dV)} \leq C \|h\|_{L^m(dV)}
$$
by (\ref{eq:fracint1}), (\ref{eq:fracint2}) and the cited result about the Riesz transforms.
Altogether, this shows
$$
\left| \int_{\M} (X_i u) \, h\, dV \right| \leq C \|f\|_{L^1(dV)} \|h\|_{L^m(dV)}
$$
for all test functions $h \in C^{\infty}_c(\M)$, and hence
$$
\|\nabla u\|_{L^{\frac{m}{m-1}}(dV)} \leq C \|f\|_{L^1(dV)},
$$
as desired.

\begin{bibdiv}
\begin{biblist}

\bib{MR1736928}{article}{
   author={Anker, J.-P.},
   author={Ji, L.},
   title={Heat kernel and Green function estimates on noncompact symmetric
   spaces},
   journal={Geom. Funct. Anal.},
   volume={9},
   date={1999},
   number={6},
   pages={1035--1091},
   issn={1016-443X},
   review={\MR{1736928}},
   doi={10.1007/s000390050107},
}

\bib{MR3091819}{article}{
   author={Baldi, Annalisa},
   author={Franchi, Bruno},
   title={Sharp a priori estimates for div-curl systems in Heisenberg
   groups},
   journal={J. Funct. Anal.},
   volume={265},
   date={2013},
   number={10},
   pages={2388--2419},
   issn={0022-1236},
}

\bib{MR2243012}{book}{
   author={Ballmann, Werner},
   title={Lectures on K\"ahler manifolds},
   series={ESI Lectures in Mathematics and Physics},
   publisher={European Mathematical Society (EMS), Z\"urich},
   date={2006},
   pages={x+172},
   isbn={978-3-03719-025-8},
   isbn={3-03719-025-6},
}

\bib{MR1809879}{book}{
   author={Bertram, Wolfgang},
   title={The geometry of Jordan and Lie structures},
   series={Lecture Notes in Mathematics},
   volume={1754},
   publisher={Springer-Verlag, Berlin},
   date={2000},
   pages={xvi+269},
   isbn={3-540-41426-6},
}

\bib{MR1913720}{article}{
   author={Bourgain, Jean},
   author={Brezis, Ha{\"{\i}}m},
   title={Sur l'\'equation ${\rm div}\,u=f$},
   journal={C. R. Math. Acad. Sci. Paris},
   volume={334},
   date={2002},
   number={11},
   pages={973--976},
   issn={1631-073X},
}

\bib{MR1949165}{article}{
   author={Bourgain, Jean},
   author={Brezis, Ha{\"{\i}}m},
   title={On the equation ${\rm div}\, Y=f$ and application to control of
   phases},
   journal={J. Amer. Math. Soc.},
   volume={16},
   date={2003},
   number={2},
   pages={393--426},
   issn={0894-0347},
}

\bib{MR2057026}{article}{
   author={Bourgain, Jean},
   author={Brezis, Ha{\"{\i}}m},
   title={New estimates for the Laplacian, the div-curl, and related Hodge
   systems},
   journal={C. R. Math. Acad. Sci. Paris},
   volume={338},
   date={2004},
   number={7},
   pages={539--543},
   issn={1631-073X},
}

\bib{MR2293957}{article}{
   author={Bourgain, Jean},
   author={Brezis, Ha{\"{\i}}m},
   title={New estimates for elliptic equations and Hodge type systems},
   journal={J. Eur. Math. Soc. (JEMS)},
   volume={9},
   date={2007},
   number={2},
   pages={277--315},
   issn={1435-9855},
}

\bib{MR2075883}{article}{
   author={Bourgain, Jean},
   author={Brezis, Haim},
   author={Mironescu, Petru},
   title={$H^{1/2}$ maps with values into the circle: minimal
   connections, lifting, and the Ginzburg-Landau equation},
   journal={Publ. Math. Inst. Hautes \'Etudes Sci.},
   number={99},
   date={2004},
   pages={1--115},
   issn={0073-8301},
}

\bib{MR3054337}{article}{
   author={Bousquet, Pierre},
   author={Mironescu, Petru},
   author={Russ, Emmanuel},
   title={A limiting case for the divergence equation},
   journal={Math. Z.},
   volume={274},
   date={2013},
   number={1-2},
   pages={427--460},
   issn={0025-5874},
}

\bib{MR3283556}{article}{
   author={Bousquet, Pierre},
   author={Van Schaftingen, Jean},
   title={Hardy-Sobolev inequalities for vector fields and canceling linear
   differential operators},
   journal={Indiana Univ. Math. J.},
   volume={63},
   date={2014},
   number={5},
   pages={1419--1445},
   issn={0022-2518},
}

\bib{MR2332419}{article}{
   author={Brezis, Ha{\"{\i}}m},
   author={Van Schaftingen, Jean},
   title={Boundary estimates for elliptic systems with $L^1$-data},
   journal={Calc. Var. Partial Differential Equations},
   volume={30},
   date={2007},
   number={3},
   pages={369--388},
   issn={0944-2669},
}

\bib{MR2500488}{article}{
   author={Brezis, Ha{\"{\i}}m},
   author={Van Schaftingen, Jean},
   title={Circulation integrals and critical Sobolev spaces: problems of
   optimal constants},
   conference={
      title={Perspectives in partial differential equations, harmonic
      analysis and applications},
   },
   book={
      series={Proc. Sympos. Pure Math.},
      volume={79},
      publisher={Amer. Math. Soc.},
      address={Providence, R.I.},
   },
   date={2008},
   pages={33--47},
}

\bib{MR2511628}{article}{
   author={Chanillo, Sagun},
   author={Van Schaftingen, Jean},
   title={Subelliptic Bourgain-Brezis estimates on groups},
   journal={Math. Res. Lett.},
   volume={16},
   date={2009},
   number={3},
   pages={487--501},
   issn={1073-2780},
}

\bib{MR3439724}{article}{
   author={Chanillo, Sagun},
   author={Van Schaftingen, Jean},
   author={Yung, Po-Lam},
   title={Applications of Bourgain--Brezis inequalities to fluid mechanics
   and magnetism},
   journal={C. R. Math. Acad. Sci. Paris},
   volume={354},
   date={2016},
   number={1},
   pages={51--55},
   issn={1631-073X},
}

\bib{CvSY2016}{article}{
   author={Chanillo, Sagun},
   author={Van Schaftingen, Jean},
   author={Yung, Po-Lam},
   title={Variations on a proof of a borderline Bourgain--Brezis Sobolev embedding theorem},
   journal={submitted},
}

\bib{MR2876830}{article}{
   author={Chanillo, Sagun},
   author={Yung, Po-Lam},
   title={An improved Strichartz estimate for systems with divergence free
   data},
   journal={Comm. Partial Differential Equations},
   volume={37},
   date={2012},
   number={2},
   pages={225--233},
   issn={0360-5302},
}

\bib{MR0367477}{article}{
   author={Folland, G. B.},
   author={Stein, E. M.},
   title={Estimates for the $\bar \partial _{b}$ complex and analysis on
   the Heisenberg group},
   journal={Comm. Pure Appl. Math.},
   volume={27},
   date={1974},
   pages={429--522},
   issn={0010-3640},
}

\bib{MR0102740}{article}{
   author={Gagliardo, Emilio},
   title={Propriet\`a di alcune classi di funzioni in pi\`u variabili},
   journal={Ricerche Mat.},
   volume={7},
   date={1958},
   pages={102--137},
   issn={0035-5038},
}
\bib{MR2677615}{article}{
   author={Garroni, Adriana},
   author={Leoni, Giovanni},
   author={Ponsiglione, Marcello},
   title={Gradient theory for plasticity via homogenization of discrete
   dislocations},
   journal={J. Eur. Math. Soc. (JEMS)},
   volume={12},
   date={2010},
   number={5},
   pages={1231--1266},
   issn={1435-9855},
}
\bib{MR0056610}{article}{
   author={Harish-Chandra},
   title={Representations of a semisimple Lie group on a Banach space. I},
   journal={Trans. Amer. Math. Soc.},
   volume={75},
   date={1953},
   pages={185--243},
   issn={0002-9947},
}

\bib{MR1834454}{book}{
   author={Helgason, Sigurdur},
   title={Differential geometry, Lie groups, and symmetric spaces},
   series={Graduate Studies in Mathematics},
   volume={34},
   edition={Corrected reprint of the 1978 original},
   publisher={American Mathematical Society, Providence, R.I.},
   date={2001},
   pages={xxvi+641},
   isbn={0-8218-2848-7},
}

\bib{MR1790156}{book}{
   author={Helgason, Sigurdur},
   title={Groups and geometric analysis},
   series={Mathematical Surveys and Monographs},
   volume={83},
   subtitle={Integral geometry, invariant differential operators, and spherical
   functions},
   edition={Corrected reprint of the 1984 original},
   publisher={American Mathematical Society, Providence, R.I.},
   date={2000},
   pages={xxii+667},
   isbn={0-8218-2673-5},
}

\bib{MR3314065}{article}{
   author={Hounie, J.},
   author={Picon, T.},
   title={Local ${\rm L}^1$ estimates for elliptic systems of complex
   vector fields},
   journal={Proc. Amer. Math. Soc.},
   volume={143},
   date={2015},
   number={4},
   pages={1501--1514},
   issn={0002-9939},
}

\bib{MR1920389}{book}{
   author={Knapp, Anthony W.},
   title={Lie groups beyond an introduction},
   series={Progress in Mathematics},
   volume={140},
   edition={2},
   publisher={Birkh\"auser Boston},
   address={Boston, Mass.},
   date={2002},
   pages={xviii+812},
   isbn={0-8176-4259-5},
}

\bib{MR1476489}{article}{
   author={Knapp, Anthony W.},
   title={Structure theory of semisimple Lie groups},
   conference={
      title={Representation theory and automorphic forms},
      address={Edinburgh},
      date={1996},
   },
   book={
      series={Proc. Sympos. Pure Math.},
      volume={61},
      publisher={Amer. Math. Soc., Providence, R.I.},
   },
   date={1997},
   pages={1--27},
}

\bib{MR2122730}{article}{
   author={Lanzani, Loredana},
   author={Stein, Elias M.},
   title={A note on div curl inequalities},
   journal={Math. Res. Lett.},
   volume={12},
   date={2005},
   number={1},
   pages={57--61},
   issn={1073-2780},
}

\bib{MR786621}{article}{
   author={Lohou{\'e}, No{\"e}l},
   title={Comparaison des champs de vecteurs et des puissances du laplacien
   sur une vari\'et\'e riemannienne \`a courbure non positive},
   journal={J. Funct. Anal.},
   volume={61},
   date={1985},
   number={2},
   pages={164--201},
   issn={0022-1236},
}

\bib{MR2381898}{article}{
   author={Maz'ya, Vladimir},
   title={Bourgain-Brezis type inequality with explicit constants},
   conference={
      title={Interpolation theory and applications},
   },
   book={
      series={Contemp. Math.},
      volume={445},
      publisher={Amer. Math. Soc., Providence, R.I.},
   },
   date={2007},
   pages={247--252},
}

\bib{MR0109940}{article}{
   author={Nirenberg, L.},
   title={On elliptic partial differential equations},
   journal={Ann. Scuola Norm. Sup. Pisa (3)},
   volume={13},
   date={1959},
   pages={115--162},
}

\bib{MR0436223}{article}{
   author={Rothschild, Linda Preiss},
   author={Stein, E. M.},
   title={Hypoelliptic differential operators and nilpotent groups},
   journal={Acta Math.},
   volume={137},
   date={1976},
   number={3-4},
   pages={247--320},
   issn={0001-5962},
}
\bib{MR1246427}{article}{
   author={Smirnov, S. K.},
   title={Decomposition of solenoidal vector charges into elementary
   solenoids, and the structure of normal one-dimensional flows},
   language={Russian},
   journal={Algebra i Analiz},
   volume={5},
   date={1993},
   number={4},
   pages={206--238},
   issn={0234-0852},
   translation={
      journal={St. Petersburg Math. J.},
      volume={5},
      date={1994},
      number={4},
      pages={841--867},
      issn={1061-0022},
   },
}

\bib{MR1232192}{book}{
   author={Stein, Elias M.},
   title={Harmonic analysis: real-variable methods, orthogonality, and
   oscillatory integrals},
   series={Princeton Mathematical Series},
   volume={43},
   publisher={Princeton University Press, Princeton, NJ},
   date={1993},
   pages={xiv+695},
   isbn={0-691-03216-5},
}

\bib{MR2038078}{article}{
   author={Van Schaftingen, Jean},
   title={A simple proof of an inequality of Bourgain, Brezis and Mironescu},
   journal={C. R. Math. Acad. Sci. Paris},
   volume={338},
   date={2004},
   number={1},
   pages={23--26},
   issn={1631-073X},
}

\bib{MR2078071}{article}{
   author={Van Schaftingen, Jean},
   title={Estimates for $L^1$-vector fields},
   journal={C. R. Math. Acad. Sci. Paris},
   volume={339},
   date={2004},
   number={3},
   pages={181--186},
   issn={1631-073X},
}

\bib{MR2240172}{article}{
   author={Van Schaftingen, Jean},
   title={Function spaces between BMO and critical Sobolev spaces},
   journal={J. Funct. Anal.},
   volume={236},
   date={2006},
   number={2},
   pages={490--516},
   issn={0022-1236},
}

\bib{MR2443922}{article}{
   author={Van Schaftingen, Jean},
   title={Estimates for $L^1$ vector fields under higher-order
   differential conditions},
   journal={J. Eur. Math. Soc. (JEMS)},
   volume={10},
   date={2008},
   number={4},
   pages={867--882},
   issn={1435-9855},
}

\bib{MR2550188}{article}{
   author={Van Schaftingen, Jean},
   title={Limiting fractional and Lorentz space estimates of differential
   forms},
   journal={Proc. Amer. Math. Soc.},
   volume={138},
   date={2010},
   number={1},
   pages={235--240},
   issn={0002-9939},
}

\bib{MR3085095}{article}{
   author={Van Schaftingen, Jean},
   title={Limiting Sobolev inequalities for vector fields and canceling
   linear differential operators},
   journal={J. Eur. Math. Soc. (JEMS)},
   volume={15},
   date={2013},
   number={3},
   pages={877--921},
   issn={1435-9855},
}

\bib{MR3298002}{article}{
   author={Van Schaftingen, Jean},
   title={Limiting Bourgain-Brezis estimates for systems of linear
   differential equations: theme and variations},
   journal={J. Fixed Point Theory Appl.},
   volume={15},
   date={2014},
   number={2},
   pages={273--297},
   issn={1661-7738},
}

\bib{MR3191973}{article}{
   author={Wang, Yi},
   author={Yung, Po-Lam},
   title={A subelliptic Bourgain-Brezis inequality},
   journal={J. Eur. Math. Soc. (JEMS)},
   volume={16},
   date={2014},
   number={4},
   pages={649--693},
   issn={1435-9855},
}

\bib{MR2592736}{article}{
   author={Yung, Po-Lam},
   title={Sobolev inequalities for $(0,q)$ forms on CR manifolds of finite
   type},
   journal={Math. Res. Lett.},
   volume={17},
   date={2010},
   number={1},
   pages={177--196},
   issn={1073-2780},
}

\end{biblist}
\end{bibdiv}

\end{document}